\ProcessOptions \RequirePackage{amsmath}
                  \def\z{\zeta}
                \def\8{\theta}
\newcommand{\hol}{{\mathcal Hol}}
\DeclareMathOperator{\og}{O}
\def \M{ M}
\def\D{{\mathbb D}}
\def\T{{\mathbb T}}
\def\C{{\mathbb C}}  \def\N{{\mathbb N}}
 \def\cb{{\mathcal B}}
\def\Dp{{\mathcal D^p_{p-1}}}
\def \dph{H^\infty \cap \mathcal{D}_{p-1}^{p}}
\def \dqh{H^\infty \cap \mathcal{D}_{q-1}^{q}}
\def \dpb{\mathcal{B} \cap \mathcal{D}_{p-1}^{p}}
\def \dqb{\mathcal{B} \cap \mathcal{D}_{q-1}^{q}}
\def\Dpbmoa{{\mathcal D^p_{p-1}}\cap BMOA}
\def\Dqbmoa{{\mathcal D^q_{q-1}}\cap BMOA}
\def\Dpa{{\mathcal D^p_{\alpha}}}
\def\Dq{{\mathcal D^q_{q-1}}}
\def\({\left(}       \def\){\right)}
\newcommand{\iii}{\frac 1{2\pi}\int_0^{2\pi}}
\newcommand{\ig}{\stackrel{\text{def}}{=}}
\newcommand{\equstart}{\begin{equation}\begin{aligned}}
\newcommand{\equend}{\end{aligned}\end{equation}}
\newcommand{\equstartu}{\begin{equation*}\begin{aligned}}
\newcommand{\equendu}{\end{aligned}\end{equation*}}
\newtheorem{theorem}{Theorem}
\newtheorem{lemma}{Lemma}
\newtheorem{proposition}{Proposition}
\theoremstyle{definition}
\newtheorem{question}{Question}
\theoremstyle{remark}
\numberwithin{equation}{section}
\theoremstyle{theorem}
\newtheorem{other}{\bf Theorem}              
\newtheorem{otherp}{\bf Proposition}  
\newenvironment{pf}{\noindent{\emph{Proof.}}}{$\Box$ }
\newenvironment{Pf}{\noindent{\emph{Proof of}}}{$\Box$ }
\begin{document}
\title[Multipliers of Dirichlet subspaces of the Bloch  space]
{Multipliers of Dirichlet subspaces of the Bloch  space}

\author[Ch.~Chatzifountas]{Christos Chatzifountas}
\address{Departamento de An\'alisis Matem\'atico,
Universidad de M\'alaga, Campus de Teatinos, 29071 M\'alaga, Spain}
 \email{christos.ch@uma.es}
\author[D.~Girela]{Daniel Girela}
 \address{Departamento de An\'alisis Matem\'atico,
Universidad de M\'alaga, Campus de Teatinos, 29071 M\'alaga, Spain}
 \email{girela@uma.es}
\author[J.~A.~Pel\'aez]{Jos\'e \'Angel Pel\'aez}
 \address{Departamento de An\'alisis Matem\'atico,
Universidad de M\'alaga, Campus de Teatinos, 29071 M\'alaga, Spain}
 \email{japelaez@uma.es}

\subjclass[2010]{Primary 47B35; Secondary  30H10}
\date{November 20, 2012}
\keywords{Bloch space, BMOA, Hardy spaces, Spaces of Dirichlet type,
Multipliers, Lacunary power series, Random power series}
\begin{abstract}
For $0<p<\infty $ we let $\Dp$ denote the space of those functions
$f$ which are analytic in the unit disc $\D $
 and
satisfy $\int_\D (1-\vert z\vert)\sp {p-1}\vert f'(z)\vert \sp
p\,dA(z)<\infty $.
\par It is known that, whenever $p\neq q$,  the only
multiplier from $\Dp $ to $\Dq $ is the trivial one. However, if $X$
is a subspace of the Bloch space and $0<p\le q<\infty$, then $X \cap
\Dp \subset X\cap \Dq $, a fact which implies that the space of
multipliers $\M(\Dp\cap X, \Dq\cap X)$ is non-trivial.
\par In this paper we study the spaces of multipliers $\M(\Dp\cap X, \Dq\cap
X)$ ($0<p,q<\infty $) for distinct classical subspaces $X$ of the
Bloch space. Specifically, we shall take $X$ to be $H^\infty $,
$BMOA$ and the Bloch space $\cb $.
\end{abstract}
\thanks{This research is supported by a grant from la Direcci\'{o}n General de Investigaci\'{o}n, Spain
(MTM2011-25502) and by a grant from la Junta de Andaluc\'{\i}a
(P09-FQM-4468 and FQM-210). The third author is supported also by
the \lq\lq Ram\'on y Cajal program\rq\rq , Spain.} \maketitle

\bigskip
\section{Introduction and main results}\label{intro}

Let $\D=\{z\in\C: |z|<1\}$ denote the open unit disc in the complex
plane $\C$ and let $\hol (\D)$ be the space of all analytic
functions in $\D$ endowed with the topology of uniform convergence
in compact subsets.
\par
If $\,0<r<1\,$ and $\,f\in \hol (\D)$, we set
$$
M_p(r,f)=\left(\iii |f(re^{it})|^p\,dt\right)^{1/p}, \,\,\,
0<p<\infty ,
$$
$$
M_\infty(r,f)=\sup_{\vert z\vert =r}|f(z)|.
$$
\par Whenever $\,0<p\le \infty $\, the Hardy space $H^p$ consists of those
$f\in \hol(\mathbb D)$ such that $\Vert f\Vert _{H^p}\ig
\sup_{0<r<1}M_p(r,f)<\infty $ (see \cite{D} for the theory of
$H^p$-spaces). If $0<p<\infty $ and $\alpha>-1$, the weighted
Bergman space $A^p_\alpha$ consists of those $f\in \hol(\mathbb D)$
such that
\[
\Vert f\Vert _{A^p_\alpha }\ig \left ((\alpha +1) \int_\mathbb
D(1-\vert z\vert )\sp\alpha \vert f(z)\vert ^p\, dA(z) \right
)^{1/p}<\infty .\] The unweighted Bergman space $A\sp p\sb 0 $ is
simply denoted by $A\sp p $. Here, $dA(z) =\frac{1}{\pi}dx\,dy $
denotes the normalized Lebesgue area measure in $\mathbb D$. We
refer to \cite{DS}, \cite{HKZ} and \cite{Zhu} for the theory of
these spaces.
\par
The space $\Dpa$ ($0<p<\infty $, $\alpha >-1$) consists of those
$f\in \hol(\mathbb D)$ such that $f'\in A^p_\alpha $. Hence, if $f$
is analytic in $\mathbb D$, then $f\in \Dpa$ if and only if
\[
\Vert f\Vert _{\Dpa}^p\ig \vert f(0)\vert^p+\Vert f'\Vert
_{A^p_\alpha }^p <\infty .\]
 If $p<\alpha +1$ then it is well known
that $\Dpa =A^p_{\alpha -p}$
 (see, e.\,\@g. Theorem\,\@6 of
\cite{Flett}). On the other hand, if $p>\alpha +2$ then $\Dpa
\subset H^\infty $. Therefore $\Dpa$ becomes a \lq\lq proper
Dirichlet space\rq\rq\, when $\alpha +1\le p\le \alpha +2$. The
spaces $\Dp $ are closely related with Hardy spaces. Indeed, it is
well known that $D^2_1=H^2$. We have also \cite{LP}
\begin{equation}\label{1}
H\sp p\subsetneq\Dp,\quad\text{ for $2\le p<\infty$},
\end{equation}
and \cite{Flett, Vi}
\begin{equation}\label{2}
\Dp\subsetneq H^p,\quad\text{ for $0<p\le 2$.}
\end{equation}
We remark that for $p\neq q$ there is no relation of inclusion
between $\Dp$ and $\Dq$ (see, e.\,\@g., \cite{BGP} and \cite{GP2}).
 \par We recall that the Bloch space $\cb$  consists of
those $f\in\hol(\D)$ such that
\begin{displaymath}
\|f\|_{\mathcal{B}}=|f(0)|+\sup_{z\in\D}(1-|z|^2)\,|f'(z)|<\infty.
\end{displaymath}
We refer to \cite{ACP} for the theory of Bloch functions.
\par\medskip Next, we consider multiplication operators. For $g\in\hol (\D )$, the multiplication operator $M_g$ is defined
by \[ M_g(f)(z)\ig g(z)f(z),\quad f\in \hol (\D),\,\, z\in \D.
\]
If $X$ and $Y$ are two normed (or Fr\'{e}chet) spaces of analytic
functions in $\D $ which are continuously contained in $\hol (\D )$,
$\M(X,Y)$ will denote the space of multipliers from $X$ to $Y$,
 $$\M (X,Y)=\{g\in\hol(\D):\, fg\in Y,\quad\text{for all $f\in
 X$}\},$$
 and $||M_g||_{(X\rightarrow Y)}$ will denote the norm of the operator $M_g$.
 If $X=Y$ we simply write $\M (X)$.
These operators have been studied on the Dirichlet type spaces
$\Dpa$ in \cite{GP:IE06,GP:JFA06, GaGiPe}, where among other results
it is proved
 that
 \begin{equation}\label{MDpDqtrivial}
 \M(\Dp,\Dq)=\{0\},\quad 0<p, q < \infty ,\,\, p\neq q.\end{equation}
\par\medskip The following simple observation plays an important
role in the motivation of this work.
\begin{lemma}\label{inclusionDpB} Suppose that \,$0<p<q<\infty $ and
$f\in \Dp \cap \mathcal B$. Then $f\in \Dq$.
\end{lemma}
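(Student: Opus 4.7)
The plan is straightforward: bound the Bloch factor pointwise and absorb it into the known $\mathcal{D}^p_{p-1}$ estimate. More precisely, I would write the integrand defining the $\mathcal{D}^q_{q-1}$-norm as
\[
(1-|z|)^{q-1}|f'(z)|^q = (1-|z|)^{p-1}|f'(z)|^p \cdot (1-|z|)^{q-p}|f'(z)|^{q-p},
\]
and then control the second factor using that $f\in\mathcal{B}$.

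Since $f \in \mathcal{B}$, we have $(1-|z|^2)|f'(z)| \le \|f\|_{\mathcal{B}}$, hence $|f'(z)| \le \|f\|_{\mathcal{B}}/(1-|z|)$ for all $z \in \D$. Because $q-p > 0$, raising to the $(q-p)$-th power gives
\[
(1-|z|)^{q-p}|f'(z)|^{q-p} \le \|f\|_{\mathcal{B}}^{\,q-p}.
\]
Plugging this into the identity above yields the pointwise bound
\[
(1-|z|)^{q-1}|f'(z)|^q \le \|f\|_{\mathcal{B}}^{\,q-p}\,(1-|z|)^{p-1}|f'(z)|^p.
\]

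Integrating over $\D$ with respect to $dA$ then gives
\[
\int_\D (1-|z|)^{q-1}|f'(z)|^q\,dA(z) \le \|f\|_{\mathcal{B}}^{\,q-p}\int_\D (1-|z|)^{p-1}|f'(z)|^p\,dA(z),
\]
and the right-hand side is finite by hypothesis $f\in\mathcal{D}^p_{p-1}$. This proves $f\in\mathcal{D}^q_{q-1}$.

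There is no real obstacle here; the only delicate point is the elementary observation that splitting the exponents $q-1 = (p-1)+(q-p)$ and $q = p+(q-p)$ matches exactly so that the Bloch estimate $(1-|z|)|f'(z)| \lesssim 1$, applied $(q-p)$ times, converts one weight into the other. The argument simultaneously exhibits the continuous inclusion $\mathcal{D}^p_{p-1}\cap\mathcal{B}\hookrightarrow \mathcal{D}^q_{q-1}$ with a norm bound depending only on $\|f\|_{\mathcal{B}}^{q-p}$ and $\|f\|_{\mathcal{D}^p_{p-1}}^p$, which will likely be useful for the multiplier statements to follow.
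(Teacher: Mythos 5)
Your proof is correct and is essentially the same as the paper's: both split the integrand as $(1-|z|)^{p-1}|f'(z)|^p\cdot\bigl[(1-|z|)|f'(z)|\bigr]^{q-p}$ and bound the bracketed factor by the Bloch seminorm raised to the power $q-p$. Nothing to add.
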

\begin{proof} Since $f\in \mathcal B$ we have that
$\sup_{z\in \D}(1-\vert z\vert )\vert f^\prime (z)\vert =M<\infty $.
Using this we obtain
\begin{eqnarray*}\int_{\D }(1-\vert z\vert
)^{q-1}\vert f^\prime (z)\vert^q\,dA(z)\,& = \int_{\D
}\left[(1-\vert z\vert )\vert f^\prime (z)\vert \right
]^{q-p}(1-\vert z\vert )^{p-1}\vert f^\prime (z)\vert^p\,dA(z)\\&
\le M^{q-p}\int_{\D }(1-\vert z\vert )^{p-1}\vert f^\prime
(z)\vert^p\,dA(z)\,<\infty .\end{eqnarray*} Hence, $f\in \Dq$. $\qed
$
\end{proof}
Consequently, we have: \par {\it If\, $X$\, is a subspace of the
Bloch space then
\begin{equation}\label{X}
X\cap\Dp\subset X\cap\Dq ,\quad \text{if\,\, $0<p\le q<\infty$},
\end{equation}
a fact which, contrary to (\ref{MDpDqtrivial}), implies that
whenever $0<p\le q<\infty$, the space of multipliers $\M(\Dp\cap X,
\Dq\cap X)$ is non-trivial.}
\par\medskip
If $X\subset \cb$, the space $X\cap\Dp$ is equipped with the norm
$$ \|f\|_{X\cap \Dp} \ig\|f\|_{X} + \|f\|_ {\Dp}.$$
Our aim is this paper is to obtain a characterization of the spaces
$\M(\Dp\cap X, \Dq\cap X)$ $(0<p,q<\infty )$ for some important
subspaces $X$ of the Bloch space.
\par\medskip Let us start with $X=\cb $. For $\alpha>0$, the $\alpha$-logarithmic-Bloch space  $\cb_{\log,\alpha}$  consists of those
$g\in \hol (\D)$ such that
$$\rho_{\alpha}(f)\ig\sup_{z\in\D}(1-|z|^2)|g'(z)|\left(\log\frac{e}{1-|z|^2}\right)^\alpha<\infty .$$
It is clear that
\begin{equation}\label{eq:logemb}
\cb_{\log,\alpha}\subset \cb_{\log,\beta},\quad\alpha\ge\beta.
\end{equation}
For simplicity, the space  $\cb_{\log,1}$ will be denoted by $\cb_{\log}$.
\par The multipliers of the Bloch space into itself were characterized
independently by several authors (see \cite{Arazy82, BS, ZhuMult}).
Namely, we have the following result:
\begin{equation}\label{MB}\M(\cb )=\cb_{\log}\cap
H^\infty. \end{equation}
\par\medskip Let us turn our attention to the spaces
$\M(\Dp\cap\cb, \Dq\cap\cb )$. Among other results, we shall prove
that, for $p>1$, the space $\M(\dpb)$ coincides $\M(\cb)$. This is
part of the following result.
\begin{theorem}\label{th:blochdp}
Let \,$0<p,q<\infty$ and $g\in\hol(\D)$.
\par (i) If \,$1<q$ and $0<p \leq q <\infty$, then,  $$\M(\dpb,\dqb)=\M(\cb).$$
\par (ii) If \,$0<q<p<\infty$, then $$\M(\dpb, \dqb)=\{0\}.$$
\end{theorem}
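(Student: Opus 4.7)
The plan is to prove part (i) via a double inclusion and part (ii) by constructing a suitable obstruction.

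For part (i), I aim first to prove $\M(\cb)\subseteq\M(\dpb,\dqb)$. Let $g\in\M(\cb)=\cb_{\log}\cap H^\infty$ (by \eqref{MB}) and $f\in\dpb$. Since $g\in\M(\cb)$, $gf\in\cb$. To show $gf\in\Dq$, I decompose $(gf)'=g'f+gf'$ and estimate separately. The term $gf'$ is handled by $\int_\D(1-|z|)^{q-1}|g(z)|^q|f'(z)|^q\,dA(z)\le\|g\|_{H^\infty}^q\|f\|_{\Dq}^q$, where $f\in\Dq$ follows from Lemma \ref{inclusionDpB} applied with $p\le q$. The delicate term is $\int_\D(1-|z|)^{q-1}|g'(z)|^q|f(z)|^q\,dA(z)$: invoking the $\cb_{\log}$-inequality $(1-|z|)|g'(z)|\log\frac{e}{1-|z|}\le C$, the task reduces to a Carleson-type estimate showing that $d\mu(z)=(1-|z|)^{-1}(\log\frac{e}{1-|z|})^{-q}\,dA(z)$ is a $q$-test measure for $\dpb$, which I would prove by combining the Bloch growth $|f(z)|\le C\log\frac{e}{1-|z|}$ with integrated $\Dp$-data for $f$, via a Hardy--Stein/Littlewood--Paley reformulation on concentric circles.

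For the converse inclusion $\M(\dpb,\dqb)\subseteq\M(\cb)$, testing with $f\equiv 1\in\dpb$ yields $g\in\dqb\subset\cb$. To upgrade to $g\in H^\infty\cap\cb_{\log}=\M(\cb)$, I plan to use a family of test functions $\{f_a\}_{a\in\D}\subset\dpb$ with $\sup_{a\in\D}\|f_a\|_{\dpb}<\infty$, designed so that either $|f_a(a)|$ (for the $H^\infty$-conclusion) or $(1-|a|^2)|f_a'(a)|\log\frac{e}{1-|a|^2}$ (for the $\cb_{\log}$-conclusion) is bounded below by a positive constant independent of $a$. Since $gf_a\in\dqb\subset\cb$ with norm uniformly bounded, the Bloch-growth of $gf_a$ at $z=a$ then transfers to the desired pointwise control on $g(a)$ and $g'(a)$. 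Natural candidates, in the spirit of the proof of \eqref{MB}, are normalized logarithmic kernels such as $f_a(z)=\log\frac{e}{1-\bar a z}\cdot\left(\log\frac{e}{1-|a|^2}\right)^{-\sigma}$ with $\sigma$ chosen to keep the $\dpb$-norm uniformly bounded.

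For part (ii), I assume $g\in\M(\dpb,\dqb)$ with $q<p$ and seek to show $g\equiv0$. Testing with $f\equiv 1$ gives $g\in\dqb\subset\cb$. Supposing $g\not\equiv0$, I plan to construct $f\in\dpb$ with $gf\notin\Dq$. The natural candidates are lacunary power series $f(z)=\sum_k a_k z^{n_k}$, since for such series $\Dp$- and $\Dq$-membership are equivalent to weighted summability conditions on $(a_k,n_k)$, and boundedness of $(a_k)$ places $f$ automatically in $\cb$. The strict inequality $q<p$ allows coefficients that satisfy the $\Dp$-condition but fail the $\Dq$-condition; a non-zero analytic $g$ does not destroy the lacunary character of $gf$ enough to recover $\Dq$-membership, contradicting the multiplier property. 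This parallels, in the $\cb$-intersection setting, the argument underlying \eqref{MDpDqtrivial}.

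The principal technical obstacle is the Carleson-type estimate in part (i)($\supseteq$): the bounds $|g'(z)|\le C/[(1-|z|)\log\frac{e}{1-|z|}]$ and $|f(z)|\le C\log\frac{e}{1-|z|}$ have exactly cancelling logarithmic strength, so pointwise multiplication yields a non-integrable $(1-|z|)^{-1}$ singularity near $\partial\D$; the missing decay must come from the $\Dp$-integrability of $f'$ rather than from its pointwise Bloch growth. It is also at this point that the hypothesis $q>1$ enters, since the model integral $\int_0^1(1-r)^{-1}(\log\frac{e}{1-r})^{-q}\,dr$ converges exactly when $q>1$.
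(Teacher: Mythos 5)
Your overall architecture for part (i) matches the paper's (same decomposition $(gf)'=g'f+gf'$, same kind of logarithmic test functions for the converse inclusion), but the one step you yourself flag as "the principal technical obstacle" is left unresolved, and it does not resolve the way you suggest. The estimate $\int_\D|f|^q\,d\mu\lesssim\|f\|_{\dpb}^q$ for $d\mu=(1-|z|)^{-1}(\log\tfrac{e}{1-|z|})^{-q}dA$ is not obtained from "$\Dp$-integrability of $f'$" via a Hardy--Stein reformulation; the paper replaces the pointwise Bloch bound $|f(z)|\lesssim\log\tfrac{e}{1-|z|}$ by bounds on the integral means $M_q(r,f)$, and this forces a case split you do not anticipate: for $1<q\le 2$ one uses the embedding $\Dq\subset H^q$ (so $M_q(r,f)\lesssim\|f\|_{\Dq}\lesssim\|f\|_{\dpb}$ with \emph{no} logarithmic growth, and $\int_0^1(1-r)^{-1}\log^{-q}\tfrac{e}{1-r}\,dr<\infty$ precisely because $q>1$), while for $q>2$ one uses the Clunie--MacGregor estimate $M_q(r,f)\lesssim\|f\|_{\cb}(\log\tfrac{1}{1-r})^{1/2}$, leaving $\int_0^1(1-r)^{-1}\log^{-q/2}\,dr<\infty$. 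Without identifying these two inputs your plan stalls exactly at the point you identified.

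The more serious gap is in part (ii). Your argument rests on the sentence "a non-zero analytic $g$ does not destroy the lacunary character of $gf$ enough to recover $\Dq$-membership," but $gf$ is not a lacunary series, so Proposition~\ref{Had} says nothing about it, and this assertion is precisely the whole difficulty. The paper needs three genuinely different mechanisms: for $2<q$, a lacunary $f\in\dpb$ whose ordered zeros violate the growth condition that zero-sets of $\Dq$-functions must satisfy (multiplication by $g$ only adds zeros, so $gf\notin\Dq$); for $0<q\le 2<p$, the same idea with the Blaschke condition via $\Dq\subset H^q$; and for $0<p\le 2$, a Rademacher randomization $f_t(z)=\sum r_k(t)a_kz^{2^k}$ combined with Fubini and Khinchine's inequality, showing that the uniform bound $\|gf_t\|_{\Dq}\lesssim 1$ would force $\int_\D|g(z)|^qM_2^q(|z|,f')(1-|z|^2)^{q-1}dA(z)<\infty$, which fails for $g\not\equiv 0$ since $f\notin\Dq$. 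A single deterministic lacunary test function does not suffice in this last range, and nothing in your sketch supplies any of these three arguments.
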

\par The question of obtaining a complete characterization of $M(\dpb,\dqb)$ in the case $0<p\le q\le 1$ remains open.
However, we remark that the inclusion $$\M(\dpb,\dqb)\subset
\M(\cb),$$ is true for any $p, q$ (see the proof of
Theorem~\@\ref{th:blochdp} in Section~\ref{multbloch}). Using this,
the
 fact that $\M(\dpb,\dqb)\subset \dqb$,  and the
following result we see that part (i) of Theorem \ref{th:blochdp}
does not remain true for $0<q\le 1$.
\begin{theorem}\label{pr:1}
If \,$0<q\le 1$,  then   $\M(\cb)\setminus\Dq\neq\{0\}$.
\end{theorem}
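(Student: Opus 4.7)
The plan is to exhibit, for each $q\in(0,1]$, a nonzero element of $\M(\cb)=\cb_{\log}\cap H^\infty$ (cf.\ (\ref{MB})) that fails to belong to $\Dq$. Since the pointwise Bloch--log bound $|g'(z)|\lesssim 1/((1-|z|^2)\log(e/(1-|z|^2)))$ is only just barely integrable against $(1-|z|)^{q-1}\,dA$ when $q\le 1$ --- indeed $\int_\D(1-|z|)^{-1}(\log(e/(1-|z|^2)))^{-q}\,dA=\infty$ exactly for $q\le 1$ --- the obstruction is purely quantitative: one must produce $g\in\cb_{\log}\cap H^\infty$ whose derivative essentially saturates the Bloch--log bound on a \emph{thick} enough subset of $\D$ to make the $\Dq$-integral diverge.

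The natural model is $\phi(z):=(\log(e/(1-z)))^i$. It lies in $H^\infty$ because $\re\log(e/(1-z))=1+\log(1/|1-z|)>0$ on $\D$ keeps the argument in $(-\pi/2,\pi/2)$, giving $|\phi|\le e^{\pi/2}$; differentiation yields $|\phi'(z)|\asymp 1/(|1-z|\log(1/|1-z|))$, and the monotonicity of $t\mapsto t\log(1/t)$ on $(0,1/e)$ then yields $\phi\in\cb_{\log}$. Moreover $|\phi'|$ attains the Bloch--log order inside a Stolz angle at $z=1$, which is exactly the desired extremal behaviour; however $\phi$ still lies in $\Dq$ for every $q>0$ because this saturation occurs only near the single boundary point $z=1$. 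The idea is to spread this saturation to many boundary directions by combining rotates $\phi(\bar\zeta z)$, $\zeta\in\partial\D$, so that the combined function remains in $\cb_{\log}\cap H^\infty$ while its derivative saturates the Bloch--log bound on a set large enough to destroy membership in $\Dq$.

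For $0<q<1$, I would work with rotated sums $g=\sum_n c_n\phi(\bar\zeta_n\cdot)$ along a well-distributed sequence $\{\zeta_n\}\subset\partial\D$: summability $\sum|c_n|<\infty$ gives $g\in\M(\cb)$, and with $c_n=n^{-1/q}$ one has $\sum c_n^q=\sum 1/n=\infty$, which combined with the quasi-triangle inequality $(\sum a_i)^q\le\sum a_i^q$ suggests the right upper bound blows up; the plan is to complement this by a dominance argument isolating, near each $\zeta_n$, a region where $c_n\phi'(\bar\zeta_n\cdot)$ dominates the tail, and then summing these contributions. The main technical obstacle, and the crux of the proof, is producing a sharp \emph{lower bound} for $\|g\|_{\Dq}^q$ matching the divergent upper bound: when the $\zeta_n$'s crowd together, the peak regions of $c_n\phi(\bar\zeta_n\cdot)$ shrink and a naive partition estimate gives only a convergent lower sum, so the geometry of the $\{\zeta_n\}$ and the choice of $c_n$ must be coupled carefully. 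The endpoint $q=1$ is especially delicate because $\|\cdot\|_{D^1_0}$ is honestly subadditive: any series with $\sum|c_n|<\infty$ automatically lies in $D^1_0$. At this endpoint one is forced to replace the sum by a genuinely nonlinear device, for instance a composition $\phi\circ\omega$ with an inner function $\omega$ whose radial limits equal $1$ on a subset of $\partial\D$ of positive measure, chosen so that $1-|\omega(z)|\asymp 1-|z|$ along the relevant Stolz sectors (preserving $\cb_{\log}$ through Schwarz--Pick and the angular-derivative inequality) while $|(\phi\circ\omega)'|$ saturates the Bloch--log bound along those directions, forcing $\int_\D|(\phi\circ\omega)'(z)|\,dA(z)=\infty$.
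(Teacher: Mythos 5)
Your overall strategy (exhibit a nonzero $g\in\cb_{\log}\cap H^\infty=\M(\cb)$ with $g\notin\Dq$) is the right one, and your heuristic about saturating the Bloch--log bound on a thick set is sound, but both of your concrete constructions break down. For $0<q<1$, \emph{every} rotated sum $g=\sum_n c_n\phi(\bar\zeta_n z)$ with $\sum_n|c_n|<\infty$ belongs to $\Dq$, so no coupling of the geometry of $\{\zeta_n\}$ with the coefficients can produce the divergent lower bound you are hoping for. Indeed, $|\phi'(w)|\asymp |1-w|^{-1}\bigl(\log\frac{e}{|1-w|}\bigr)^{-1}$ uniformly on $\D$, and a direct computation gives $\int_{-\pi}^{\pi}\bigl|\phi'(\bar\zeta re^{i\theta})\bigr|\,d\theta\lesssim 1+\log\log\frac{e}{1-r}$ uniformly in $\zeta\in\T$; hence $M_1(r,g')\lesssim\bigl(\sum_n|c_n|\bigr)\bigl(1+\log\log\frac{e}{1-r}\bigr)$, Jensen's inequality yields $M_q^q(r,g')\le M_1(r,g')^{\,q}$, and therefore $\int_0^1(1-r)^{q-1}M_q^q(r,g')\,dr<\infty$. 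The divergence of $\sum_n|c_n|^q$ only makes your subadditive \emph{upper} bound useless; it does not make the integral itself diverge. For $q=1$ the object you invoke does not exist: by the Riesz--Privalov uniqueness theorem, a bounded analytic function whose radial limits equal $1$ on a subset of $\partial\D$ of positive measure is identically $1$, so there is no non-constant inner function $\omega$ with the property on which your composition device rests.

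The paper circumvents both problems by working on the Taylor-coefficient side rather than with pointwise peak functions. For $0<q<1$ it takes the lacunary series $g(z)=\sum_{k\ge1} k^{-1/q}z^{2^k}$: by Proposition~\ref{Had}, $g\in H^\infty$ (because $1/q>1$) and $g\notin\Dq$ (because $\sum_k k^{-1}=\infty$), while the coefficient decay gives $g\in\cb_{\log,1/q}\subset\cb_{\log}$. For $q=1$, where $\ell^1$-divergence of the coefficients is incompatible with boundedness of a lacunary series, it uses Fournier's interpolation (Proposition~\ref{le:fournier}) to produce $\Psi\in H^\infty$ with $\widehat{\Psi}(4^k)=1/(k+1)$, which forces $\Psi\notin\mathcal{D}^1_0$, and the block estimates of Proposition~\ref{le:fournier}\,(iv) give $\Psi\in\cb_{\log}$. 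Note that this is exactly the mechanism your sketch is missing: coefficients that are \emph{not} absolutely summable, with boundedness achieved through cancellation rather than through the triangle inequality.
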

\par\medskip
Let us now consider the spaces $\M (\dph,\dqh)$. It is easy to prove
the following result for the case $p\le q$.
\begin{theorem}\label{Hinfty:pleq1}
If $0<p\le q<\infty$, then $\M (\dph,\dqh)=\dqh$.
\end{theorem}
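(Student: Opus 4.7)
The plan is to establish both inclusions separately, neither of which presents significant difficulty. For the inclusion $\M(\dph,\dqh)\subset\dqh$, note that the constant function $1$ belongs to $\dph$ (its derivative vanishes and $\|1\|_{H^\infty}=1$); hence any multiplier $g$ satisfies $g = M_g(1)\in\dqh$. This direction does not even require $p\le q$.

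For the reverse inclusion $\dqh\subset\M(\dph,\dqh)$, I would fix $g\in\dqh$ and $f\in\dph$ and show $fg\in\dqh$. Boundedness of $fg$ is immediate from $\|fg\|_{H^\infty}\le\|f\|_{H^\infty}\|g\|_{H^\infty}$. For the Dirichlet-type membership, expand $(fg)'=f'g+fg'$ and apply the elementary inequality $|a+b|^q\le 2^q(|a|^q+|b|^q)$ together with the $H^\infty$ bounds on $f$ and $g$. This reduces the finiteness of $\int_\D(1-|z|)^{q-1}|(fg)'(z)|^q\,dA(z)$ to the finiteness of $\int_\D(1-|z|)^{q-1}|f'(z)|^q\,dA(z)$ and $\int_\D(1-|z|)^{q-1}|g'(z)|^q\,dA(z)$. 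The second quantity is finite since $g\in\Dq$ by assumption.

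The only place where the hypothesis $p\le q$ is invoked is in showing that $f\in\Dq$: this is precisely the content of Lemma \ref{inclusionDpB} applied to $f\in \Dp\cap H^\infty\subset\Dp\cap\mathcal{B}$ (with the case $p=q$ being trivial). No step in this argument presents a real obstacle; the only nontrivial ingredient is Lemma \ref{inclusionDpB}. Tracking constants through the estimate further yields a quantitative bound on $\|M_g\|_{(\dph\to\dqh)}$ in terms of $\|g\|_{\dqh}$, which shows that $M_g$ is a bounded operator whenever $g\in\dqh$.
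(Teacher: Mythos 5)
Your proposal is correct and follows essentially the same route as the paper: the forward inclusion via the constant function, and the reverse inclusion by splitting $(fg)'=f'g+fg'$, using the $H^\infty$ bounds on each factor, and invoking the embedding $\Dp\cap H^\infty\subset\Dp\cap\cb\subset\Dq$ (Lemma \ref{inclusionDpB}) exactly where the paper inlines that same computation.
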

\par\medskip Regarding the case $0<q<p$, let us notice that
if $2\le q<p$ then $H^\infty \cap \Dp=H^\infty \cap \Dq=H^\infty $.
Hence we have
\begin{equation}\label{2leq<p}\M (\dph,\dqh)=H^\infty ,\quad 2\le
q<p.\end{equation}
\par\medskip
When \,$0<q<p$\, and\, $0<q<2$\, the question is more complicated.
It is well known (see \cite[Theorem\,\@1]{Gi92} and \cite{Vi}) that,
whenever $0<q<2$, there exists a function $f\in H^\infty \setminus
\Dq$. We improve this result in our next theorem.
\begin{theorem}\label{Hinfty:q<p1}
If\, $0<q<\min\{p,2\}$, then there exists a function $\,f\in \left(
\dph\right )\setminus \left (\dqh \right )$.
\end{theorem}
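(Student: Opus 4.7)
The plan is to split on the value of $p$. When $p \ge 2$, \eqref{1} gives $H^\infty \subset H^p \subset \Dp$, so $H^\infty \cap \Dp = H^\infty$; the theorem then reduces immediately to the cited existence of a function in $H^\infty \setminus \Dq$ for any $0 < q < 2$ (\cite{Gi92}, \cite{Vi}). So the genuine work is the regime $p < 2$, i.e.\ $0 < q < p < 2$.

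For the lacunary-friendly range $q < 1$, I would take the Hadamard lacunary series $f(z) = \sum_{k \ge 1} k^{-s} z^{2^k}$. Two standard facts drive the verification. First, by Sidon's theorem, such an $f$ lies in $H^\infty$ iff $\sum |a_k| < \infty$. Second, using $M_p(r, f') \asymp M_2(r, f')$ for a lacunary derivative together with a Laplace-type evaluation of $\int_0^1 (1-r)^{p-1} M_p^p(r,f')\, r\, dr$, one obtains $\|f\|_{\Dp}^p \asymp \sum |a_k|^p$. The three conditions ``$f\in H^\infty$'', ``$f\in\Dp$'' and ``$f\notin\Dq$'' then reduce to $s > 1$, $sp > 1$, $sq \le 1$, which admit a common $s \in (\max(1, 1/p),\, 1/q]$ precisely when $q < 1$.

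The main obstacle is the remaining subcase $1 \le q < p < 2$. Lacunary series cannot work there: $\sum|a_k| < \infty$ together with $q \ge 1$ forces $\sum |a_k|^q < \infty$, so any bounded lacunary series automatically sits in $\Dq$. I would instead build $f$ with a richer spectrum, for instance a singular inner function $S_\mu(z) = \exp\bigl(-\int \frac{e^{it}+z}{e^{it}-z}\,d\mu(t)\bigr)$ whose measure $\mu$ is tailored so that the reduction of $\int_\D (1-|z|)^{p-1}|S_\mu'(z)|^p\, dA$ to local mass estimates of $\mu$ yields convergence at $p$ but divergence at $q$; alternatively, one could revisit the explicit construction of \cite{Gi92, Vi} and verify directly that the bounded function it produces already lies in $\Dp$ for $p > q$. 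The atomic case $\mu = \delta_1$ only gives critical exponent $1/2$, so the delicate step is to place the critical exponent anywhere in $[1,2)$ by spreading $\mu$ appropriately (for instance on a Cantor-type set of prescribed dimension). This calibration is the part I expect to be the main technical difficulty.
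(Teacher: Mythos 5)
Your handling of the cases $p\ge 2$ and $0<q<1$ is correct and essentially matches the paper: for $p\ge2$ one has $H^\infty\subset H^p\subset\Dp$ by \eqref{1}, so the statement reduces to the known existence of a function in $H^\infty\setminus\Dq$; for $q<1$ the paper likewise takes a lacunary series with coefficients in $\ell^{\min\{p,1\}}\setminus\ell^q$ (your explicit choice $a_k=k^{-s}$ is a special case), and Proposition~\ref{Had} finishes the verification.

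The genuine gap is exactly where you locate it: the subcase $1\le q<p<2$. You correctly observe that lacunary series cannot work there (a bounded lacunary series has $\ell^1$ coefficients, hence lies in $\Dq$ for $q\ge1$), but you then only gesture at two possible constructions --- singular inner functions with Cantor-type spectral measures, or re-examining the function of \cite{Gi92} --- and you explicitly defer the calibration that would make either of them work. Neither is carried out, and it is far from clear that the critical $\Dp$-exponent of a singular inner function can be placed at a prescribed point of $[1,2)$ so as to separate $\Dp$ from $\Dq$; establishing that would be a nontrivial result in its own right. The paper's resolution is different and more direct: it invokes Fournier's interpolation construction (Proposition~\ref{le:fournier}), which, for any $\{u_k\}\in\ell^2$ and $n_k=4^k$, produces $\Phi\in H^\infty$ with $\widehat{\Phi}(n_k)=u_k$ and, crucially, with the block estimate $\Vert S_{n_k+1,n_{k+1}+1}\Phi\Vert_{H^\infty}\lesssim|u_{k+1}|$. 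Choosing $\{u_k\}\in\ell^{\min\{p,2\}}\setminus\ell^q$, the coefficient lower bound of \cite{Vi} gives $\Vert\Phi\Vert_{\Dq}^q\gtrsim\sum_k|u_k|^q=\infty$, so $\Phi\notin\Dq$, while the block estimate combined with a Littlewood--Paley-type bound for $\Dp$ with $p<2$ (via \cite[Theorem 1.1(ii)]{GP2} and M.~Riesz) gives $\Phi\in\Dp$. Without some such device that prescribes lacunary Taylor coefficients inside a bounded function while simultaneously controlling the $H^\infty$-norms of its dyadic blocks, your argument does not close this case and the proof is incomplete.
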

\par The functions constructed in Theorem~\ref{Hinfty:q<p1} are used in a basic way in the
proof of part~(a) of our following result.
\begin{theorem}\label{Hinfty:q<p2}$ $
\begin{itemize}
\item[(a)] If\, $0<q<1$ and $0<q<p<\infty $ then $\M (\dph,\dqh)=\{
0\}$.
\item[(b)] If\, $1\le q<2\le p$ then $\M (\dph,\dqh)=\{
0\}$.
\end{itemize}
\end{theorem}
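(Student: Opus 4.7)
Suppose for contradiction that $g\in\M(\dph,\dqh)$ is not identically zero. By the closed graph theorem the multiplication operator $M_g:\dph\to\dqh$ is bounded, so there is $C>0$ with
\begin{equation*}
\|gF\|_{\Dq}\le C\,\|F\|_{\dph},\qquad F\in\dph.\tag{$\ast$}
\end{equation*}
In case (b), $\dph=H^\infty$ by~\eqref{2leq<p}. The function $f\in\dph\setminus\dqh$ produced by Theorem~\ref{Hinfty:q<p1} can be taken as a lacunary series $f(z)=\sum_k a_k z^{n_k}$ with $n_{k+1}/n_k\ge 2$, $a_k\ge 0$, and $\sum_k a_k^q=\infty$; the latter forces $f\notin\Dq$ through the lacunary equivalence $\|f\|_{\Dq}^q\asymp \sum_k a_k^q$. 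Consider its Rademacher randomization $f_\omega(z)=\sum_k\epsilon_k(\omega)a_k z^{n_k}$. In part (a), the deterministic choice $a_k=k^{-1-\varepsilon}$ for small $\varepsilon>0$ suffices and gives $\|f_\omega\|_{\dph}$ uniformly bounded in $\omega$; in part (b), where $q\ge 1$ blocks any such deterministic construction since $\ell^1\subset\ell^q$, one invokes Kahane's theorem (as in Theorem~\ref{Hinfty:q<p1}) to pick $(a_k)$ so that $f_\omega\in H^\infty$ almost surely with $E_\omega\|f_\omega\|_\infty^q<\infty$. Either way $(\ast)$ yields $E_\omega\|gf_\omega\|_{\Dq}^q\le C^q E_\omega\|f_\omega\|_{\dph}^q<\infty$.

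Expanding $(gf_\omega)'(z)=\sum_k\epsilon_k a_k(gz^{n_k})'(z)$, applying Khintchine's inequality pointwise in $z$, and invoking Fubini transforms this into
\begin{equation*}
\int_\D(1-|z|)^{q-1}\Bigl(\sum_k a_k^2\,|(gz^{n_k})'(z)|^2\Bigr)^{q/2}dA\ \asymp\ E_\omega\|gf_\omega\|_{\Dq}^q\ <\ \infty. \tag{$\star$}
\end{equation*}
Write $(gz^{n_k})'(z)=g'(z)z^{n_k}+n_k g(z)z^{n_k-1}$. On the set $E_{k_0}=\{z\in\D:2|zg'(z)|\le n_{k_0}|g(z)|\}$, which increases to $\D\setminus\{g=0\}$ as $k_0\to\infty$, the second summand dominates the first for every $k\ge k_0$, giving $|(gz^{n_k})'(z)|\ge\tfrac12 n_k|g(z)||z|^{n_k-1}$. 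Combined with the lacunary equivalence $\sum_{k\ge k_0}a_k^2 n_k^2 r^{2n_k-2}\asymp M_q(r,f')^2$ (valid for $r$ close to $1$, since only the finite initial part of the sum is removed and $M_q(r,f')\to\infty$ as $f\notin\Dq$) and with $M_q(r,g)\ge\tfrac12\|g\|_{H^q}>0$ for $r\ge r_0$ (true because $g\not\equiv 0$), polar integration over $E_{k_0}$ bounds the left-hand side of $(\star)$ from below by a positive multiple of
\begin{equation*}
\int_{r_0}^1(1-r)^{q-1} M_q(r,f')^q\, dr\ \asymp\ \|f\|_{\Dq}^q\ =\ +\infty,
\end{equation*}
contradicting $(\star)$. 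Hence $g\equiv 0$.

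The principal technical obstacle is that $g'$ need not be pointwise bounded on $\D$, so the dominance $|(gz^{n_k})'|\gtrsim n_k|g||z|^{n_k-1}$ cannot be established uniformly in $z$; the exhaustion $E_{k_0}\uparrow\D\setminus Z(g)$ together with the rapid decay of the lacunary tail bypasses this difficulty, and the Kahane-theoretic input is exactly what one needs in the regime $q\ge 1$ where no deterministic lacunary $f$ with $\sum a_k<\infty$ and $\sum a_k^q=\infty$ can exist.
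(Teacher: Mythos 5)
There are two genuine gaps, one in each part. The fatal one is in part (b): your entire scheme rests on a \emph{lacunary} $f(z)=\sum_k a_k z^{n_k}\in H^\infty$ with $\sum_k|a_k|^q=\infty$, and for $q\ge 1$ no such function exists. By Proposition~\ref{Had}\,(ii) a power series with Hadamard gaps belongs to $H^\infty$ if and only if $\sum_k|a_k|<\infty$, and since $\ell^1\subset\ell^q$ for $q\ge 1$ this forces $\sum_k|a_k|^q<\infty$, hence $f\in\Dq$ by Proposition~\ref{Had}\,(i). Randomizing the signs cannot rescue this: $f_\omega$ is still lacunary with the same moduli $|a_k|$, so $f_\omega\in H^\infty$ for even a single $\omega$ already forces $f_\omega\in\Dq$; no appeal to Kahane's theorem changes that. (Note also that Theorem~\ref{Hinfty:q<p1} does \emph{not} produce a lacunary function when $1\le q<2$ --- it uses Fournier's interpolation, Proposition~\ref{le:fournier}, whose output has spectrum spread over blocks.) The paper's proof of part (b) abandons randomization entirely and instead uses \cite[Theorem 1]{Gi92}: there is $f\in H^\infty$ with $\int_0^1(1-r)^{q-1}|f'(re^{i\theta})|^q\,dr=\infty$ for a.e.\ $\theta$; since $g\in\dqh$ and $f\in H^\infty$ make $\int_\D|g'f|^q(1-|z|^2)^{q-1}\,dA$ finite, one would need $\int_\D|gf'|^q(1-|z|^2)^{q-1}\,dA<\infty$, which fails by integrating along the positive-measure set of radii where $g$ has nonzero radial limit. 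You need that (or some equally different) input for part (b).

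In part (a) your randomization-plus-Khinchine setup is essentially the paper's, but the extraction of the lower bound from $(\star)$ has a real gap. On $E_{k_0}$ you do get $|(gz^{n_k})'|\ge\tfrac12 n_k|g||z|^{n_k-1}$ for $k\ge k_0$, but ``polar integration over $E_{k_0}$'' does not yield $M_q^q(r,g)\gtrsim 1$: for fixed $k_0$ the slice $E_{k_0}\cap\{|z|=r\}$ lies in $\{2|zg'(z)|\le n_{k_0}\|g\|_{H^\infty}\}$, and nothing prevents its angular measure from tending to $0$ as $r\to 1^-$ (indeed $|g'|$ can be radially huge almost everywhere for bounded $g$, which is precisely the content of \cite[Theorem 1]{Gi92}); this degeneration occurs exactly in the range $1-r\lesssim 1/n_{k_0}$ where the terms $a_k^2n_k^2r^{2n_k-2}$, $k\ge k_0$, carry all the mass. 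The exhaustion $E_{k_0}\uparrow\D\setminus Z(g)$ does not bypass this, because enlarging $k_0$ simultaneously deletes the head of the sum you are trying to keep. The paper avoids the pointwise comparison altogether: it first shows $\int_0^1\!\!\int_\D|g'f_t|^q(1-|z|^2)^{q-1}\,dA\,dt<\infty$ (Khinchine applied to $f_t$ itself gives $M_2(|z|,f)\le\|f\|_{H^2}$, and $g\in\Dq$), subtracts this from the finite $\int_0^1\!\!\int_\D|(gf_t)'|^q$, and then computes $\int_0^1\!\!\int_\D|gf_t'|^q$ by Fubini; there the angular integral factors exactly as $M_q^q(r,g)\,M_2^q(r,f')$ because $\int_0^1|f_t'(z)|^q\,dt\asymp M_2(|z|,f')^q$ is a radial function, and $M_q(r,g)\ge c>0$ near $r=1$ together with $M_2(r,f')\asymp M_q(r,f')$ forces divergence. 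Restructure your part (a) along these lines (and note that your $\varepsilon$ must satisfy $1/p<1+\varepsilon\le 1/q$, so it need not be small when $p<1$; the paper simply takes $a_k=k^{-1/q}$).
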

\par\medskip
 In order to prove part $(b)$, we use strongly \cite[Theorem $1$]{Gi92} which asserts
 that, whenever\, $0<q<2$,
there exists a function $f\in H^\infty $ such that
\begin{equation}\label{aeradial}\int_0^1(1-r)^{q-1}\vert f^\prime (re^{i\theta })\vert
^q\,dr=\infty ,\quad \text{for almost every $\theta\in \mathbb
R$}.\end{equation} The case $1\le q<p<2$ of Theorem
\ref{Hinfty:q<p2} remains open.  However, if the answer to the
following open question were affirmative then it would follow that
the space $\M (\dph,\dqh)$ would be trivial also for this range of
parameters. (See the proof of Theorem\,\@\ref{Hinfty:q<p2}\,\@(b)).
\begin{question}\label{q1} Suppose that $0<q<p<2$. Does there exist a function $f\in \dph$ satisfying
(\ref{aeradial})?
\end{question}
\par\medskip
We end up taking $X=BMOA$, the space of those functions $f\in H^1$
whose boundary values have bounded mean oscillation on the unit
circle $\partial\mathbb D $ as defined by John and Nirenberg
\cite{JN}. A lot of information about the space $BMOA $ and can be
found in \cite{Ba, Gar, G:BMOA}. Let us recall here that
$$H^\infty \subsetneq BMOA\subsetneq \mathcal B,\quad\text{and}\quad
H^\infty \subsetneq BMOA\subsetneq \cap_{0<p<\infty }H^p.
$$
We emphasize also that $BMOA$ can be characterized in terms of
Carleson measures. If $I\subset
\partial\D$ is an interval, $\vert I\vert $ will denote the length
of $I$. The \emph{Carleson box} $S(I)$ is defined as
$S(I)=\{re^{it}:\,e^{it}\in I,\quad 1-\frac{|I|}{2\pi }\le r <1\}$.
If \,$\mu$ is a positive Borel  measure in $\D$, we shall say that
$\mu $
 is a Carleson measure if there exists a positive constant $C$ such that
\[
\mu\left(S(I)\right )\le C{|I|}, \quad\hbox{for any interval
$I\subset\partial\D $}.
\]
We have (see, e\,\@.g. \cite[Theorem\,\@6.\,\@5]{G:BMOA}):
\par\medskip {\it A function $f\in \hol (\D )$ belongs to $BMOA$ if and only
if the Borel measure \,$\mu _f$\, in\, $\D $\, defined by\,
$d\mu_f(z)=(1-\vert z\vert^2)\vert f^\prime (z)\vert ^2\,dA(z)$\, is
a Carleson measure.}
\par\medskip The multipliers of the space $BMOA$ have been characterized in
\cite{OF} (see also \cite{SiZh} and \cite{Zhao}). Indeed, we have
\begin{equation}\label{MBMOA}\M(BMOA)=H^\infty \cap BMOA_{\log}
.\end{equation} Here, $BMOA_{\log}$ is the space of those functions
$g\in H^1$ for which there exists a positive constant $C$ such that
\[\int _{S(I)}(1-\vert z\vert ^2)\vert g^\prime (z)\vert ^2 \,dA(z)\le
C\vert I\vert \left (\log\frac{2}{\vert I\vert }\right )
^{-2},\quad\hbox{for any interval $I\subset\partial\D $}.
\]
\par
Let us mention that $BMOA_{\log }$ is called $LMOA$ in \cite{SiZh}.
Following the terminology of \cite{Zhao}, we have:
\par\medskip {\it $BMOA_{\log}$ is the space of those functions
$g\in H^1$ for which the Borel measure \,$\mu _g$\, in\, $\D $\,
defined by\, $d\mu_g(z)=(1-\vert z\vert^2)\vert g^\prime (z)\vert
^2\,dA(z)$\, is a $2$-logarithmic Carleson measure.}
\par\medskip
In order to make a proper study of the spaces of multipliers  $\M
(\Dpbmoa ,\Dqbmoa )$, we shall present in
sections\,\@\ref{Facts-BMOAlog} and \ref{Random-ps} a series of
results concerning the space $BMOA_{\log }$, some of which are of
independent interest.
\par In section\,\@\ref{Facts-BMOAlog} we shall prove directly that
$BMOA_{\log }\subsetneq \cb_{\log }\subsetneq BMOA$ and we shall
also find some simple conditions on a function $f\in \hol (\D )$
which implies its membership to $BMOA_{\log }$. As a corollary we
shall prove the following result about lacunary power series in
$BMOA_{\log }$.
\begin{proposition}\label{lac-bmoalog-log3}
Let $f\in \hol (\D )$ be given by a lacunary power series,
i.\,\@,e., $f$ is of the form
$$\hbox{$f(z)=\sum_{k=0} \sp\infty a_kz\sp {n_k}$\, ($z\in \D $)\,
with $n_{k+1}\ge \lambda n_k$\, for all $k$, for a certain $\lambda
>1$.}$$
If\, $\sum_{k=0}^\infty \vert a_k\vert ^2(\log n_k)^3<\infty $, then
$f\in BMOA_{\log }\cap H^\infty $.
\end{proposition}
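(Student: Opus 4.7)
Two separate claims need verification: $f\in H^\infty $ and $f\in BMOA_{\log }$. I would treat them in turn.

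First, membership in $H^\infty $ is an elementary consequence of the gap condition. Since $n_{k+1}\ge \lambda n_k$ with $\lambda >1$, one has $n_k\ge c\lambda ^k$ for some $c>0$, hence $\log n_k\gtrsim k$ for large $k$ and therefore $\sum _{k}(\log n_k)^{-3}<\infty $. By Cauchy--Schwarz,
\begin{equation*}
\sum _{k=0}^{\infty }|a_k|\le \left (\sum _{k}|a_k|^2(\log n_k)^3\right )^{1/2}\left (\sum _{k}(\log n_k)^{-3}\right )^{1/2}<\infty ,
\end{equation*}
so the power series of $f$ converges absolutely and uniformly on $\ol{\D }$; in particular $f\in H^\infty $.

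Second, membership in $BMOA_{\log }$ relies on one of the sufficient conditions to be proved in Section~\ref{Facts-BMOAlog}. The natural candidate---an analogue of the Littlewood--Paley criterion for $BMOA$---has the shape
\begin{equation*}
\int _{0}^{1}(1-r)\left (\log \tfrac{e}{1-r}\right )^{\alpha }M_2^2(r,f')\,dr<\infty \quad \Longrightarrow \quad f\in BMOA_{\log },
\end{equation*}
for an appropriate exponent $\alpha $. Granting such a criterion, Parseval's identity for lacunary series gives $M_2^2(r,f')=\sum _{k}|a_k|^2n_k^2r^{2(n_k-1)}$, and Fubini combined with the elementary beta-integral asymptotic
\begin{equation*}
\int _{0}^{1}(1-r)\left (\log \tfrac{e}{1-r}\right )^{\alpha }r^{2n-2}\,dr\,\asymp\,\frac{(\log n)^{\alpha }}{n^2}\qquad (n\to \infty )
\end{equation*}
reduces the integral condition to the coefficient bound $\sum _{k}|a_k|^2(\log n_k)^{\alpha }<\infty $. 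With $\alpha =3$ this is exactly the standing hypothesis, which explains the particular exponent appearing in the statement.

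The main obstacle therefore lies not in this corollary itself but in the preceding section: one must prove a sufficient condition for $BMOA_{\log }$-membership carrying precisely the right logarithmic weight. Once that criterion is in hand, the passage to lacunary series is the routine Fubini and beta-integral computation sketched above, together with the elementary Cauchy--Schwarz step that yields $H^\infty $.
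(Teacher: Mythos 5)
Your $H^\infty$ step is correct and is essentially the paper's argument (the paper deduces $|a_k|\lesssim k^{-3/2}$ from the hypothesis and sums; your Cauchy--Schwarz against $\sum_k(\log n_k)^{-3}<\infty$ is the same computation). The genuine gap is in the $BMOA_{\log}$ half: the criterion you propose to lean on,
$$\int_0^1(1-r)\Bigl(\log\tfrac{e}{1-r}\Bigr)^{\alpha}M_2^2(r,f')\,dr<\infty\ \Longrightarrow\ f\in BMOA_{\log},$$
is false for every $\alpha$. By Parseval and the beta-integral asymptotic you quote, the left-hand condition is equivalent to $\sum|a_n|^2(\log n)^{\alpha}<\infty$ for an \emph{arbitrary} power series (this is exactly Lemma~\ref{lema-coef} of the paper); note that your reduction never actually uses lacunarity, so it would prove that every $f$ with $\sum|a_n|^2(\log n)^{3}<\infty$ lies in $BMOA_{\log}$. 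That is not so: $f(z)=\log\frac{1}{1-z}$ has $a_n=1/n$, hence $\sum|a_n|^2(\log n)^{\alpha}<\infty$ for all $\alpha$, yet $(1-r)|f'(r)|\log\frac{e}{1-r}\to\infty$ as $r\to1^-$, so $f\notin\cb_{\log}$ and a fortiori $f\notin BMOA_{\log}$ since $BMOA_{\log}\subset\cb_{\log}$ (Proposition~\ref{BMOAlogsubssetBlog}). The underlying reason is that a global weighted $L^2$ bound on $f'$ cannot yield the local Carleson-type decay $\mu_f(S(I))\lesssim|I|\bigl(\log\frac{2}{|I|}\bigr)^{-2}$, because the mass of $\mu_f$ may concentrate near a single boundary point.

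The sufficient condition the paper actually proves (Proposition~\ref{suf-con-BMOAlog}) replaces $M_2$ by $M_\infty$: if $\int_0^1(1-r)\bigl(\log\frac{1}{1-r}\bigr)^2[M_\infty(r,f')]^2\,dr<\infty$ then $f\in BMOA_{\log}$; here the sup-norm lets one bound $\int_{S(I)}$ by $|I|\int_{1-|I|}^1(1-r)[M_\infty(r,f')]^2\,dr$, and $\log\frac{2}{1-r}\ge\log\frac{2}{|I|}$ on that annulus supplies the $(\log\frac{2}{|I|})^{-2}$ decay. Lacunarity then enters precisely where your sketch skips it: one estimates $M_\infty(r,f')\le\sum_k n_k|a_k|r^{n_k-1}$ and applies Cauchy--Schwarz against $\sum_k r^{2n_k}\lesssim\log\frac{2}{1-r}$, which costs one extra logarithm; this is why the hypothesis carries $(\log n_k)^3$ rather than the $(\log n_k)^2$ your $M_2$-reduction would predict. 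With that criterion in hand, the remaining Fubini/beta-integral computation is as you describe, with $m=1$ and exponent $3$.
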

\par\medskip
Section\,\@\ref{Random-ps} deals with random power series of the
form
$$
f_t(z)=\sum_{n=0}^\infty r_n(t)a_nz^n,\quad z\in \D ,\quad 0\le t\le
1,$$ where $f(z)=\sum_{n=0}^\infty a_nz^n$ is analytic in $\D $ and
 $\{ r_n\} _{n=0}^\infty $ is the sequence of Rademacher function
 (see Section~\ref{preliminaries}).
Among other results, we establish a sharp condition on the Taylor
coefficients $a_n$ of $f$ which implies the almost sure membership
of $f_t$ in $BMOA_{\log }$.
\begin{theorem}\label{random.likesledd-bmoalog}
\begin{itemize}
\item[(i)] If\, $\sum_{n=1}^\infty \vert a_n\vert ^2(\log n)^3<\infty $
then for almost every $t\in [0, 1]$, the function
$$f_t(z)=\sum_{n=1}^\infty r_n(t)a_nz^n,\quad z\in \D,$$ belongs to
$BMOA_{\log }\cap H^\infty $. \item[(ii)] Furthermore, (i) is sharp
in a very strong sense: Given a decreasing sequence of positive
numbers $\{ \delta_n \} _{n=1}^\infty $ with $\delta _n\to 0$, as
$n\to \infty $, there exists a sequence of positive numbers $\{
a_n\} _{n=1}^\infty $ with $\sum_{n=1}^\infty a_n^2\delta_n(\log
n)^3<\infty $ such that, for almost every $t$ the function $f_t$
defined by $f_t(z)=\sum_{n=1}^\infty r_n(t)a_nz^n$ ($z\in \D$) does
not belong to $\cb_{\log }$. \end{itemize}
\end{theorem}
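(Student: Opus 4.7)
The plan is to prove the almost-sure membership in $BMOA_{\log}\cap H^\infty$ by combining Khintchine/Kahane inequalities with the Carleson-measure characterization of $BMOA_{\log}$ recalled in the introduction, and then to prove the sharpness in (ii) by a block construction together with a Paley--Zygmund lower bound. The $H^\infty$ part of (i) is classical: the hypothesis $\sum|a_n|^2(\log n)^3<\infty$ is much stronger than the Salem--Zygmund condition $\sum|a_n|^2\log n<\infty$, so $f_t\in H^\infty$ almost surely, and it only remains to deal with $BMOA_{\log}$.

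For the $BMOA_{\log}$ part of (i), I would show that $d\mu_{f_t}(z)=(1-|z|^2)|f_t'(z)|^2\,dA(z)$ is almost surely a $2$-logarithmic Carleson measure. Group the Taylor coefficients into dyadic blocks $I_k=\{n:2^k\le n<2^{k+1}\}$ and set $b_k^2=\sum_{n\in I_k}|a_n|^2$, so that $\sum_k b_k^2(\log 2^k)^3\asymp\sum_n|a_n|^2(\log n)^3<\infty$. Proposition~\ref{lac-bmoalog-log3} applied to the lacunary majorant $\tilde f(z)=\sum_k b_k z^{2^k}$ then yields $\tilde f\in BMOA_{\log}\cap H^\infty$. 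To transfer this to $f_t$ I would use Kahane's inequality: for each block $S_k(z,t)=\sum_{n\in I_k}r_n(t)a_nz^n$,
$$\|S_k(\cdot,t)\|_{H^\infty}\le C(\log 2^k)^{1/2}b_k$$
outside a set of $t$'s of measure at most $e^{-c\log 2^k}$, and by Borel--Cantelli these bounds hold simultaneously for all large $k$ almost surely. Writing $f_t'=\sum_k S_k'$, splitting a given Carleson box $S(I)$ into the blocks with $2^k\lesssim 1/|I|$ and those with $2^k\gtrsim 1/|I|$, and combining this uniform block bound with the almost-orthogonality of $\{S_k'\}$ in $L^2$ should produce the target estimate
$$\mu_{f_t}(S(I))\le C\,|I|\Big(\log\frac{2}{|I|}\Big)^{-2},$$
uniformly in $I$, giving $f_t\in BMOA_{\log}$ almost surely.

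For part (ii), I would build $\{a_n\}$ supported on rapidly increasing blocks $J_k=[N_k,2N_k]$, with $N_k$ chosen so fast that $\delta_{N_k}\le 2^{-k}$, and $a_n=c_k$ on $J_k$ with $c_k^2\,|J_k|\asymp(\log N_k)^{-2}$. Then $\sum_n|a_n|^2(\log n)^3\asymp\sum_k\log N_k=\infty$, yet $\sum_n a_n^2\delta_n(\log n)^3\lesssim\sum_k 2^{-k}<\infty$. A Paley--Zygmund lower bound applied to the random polynomial $P_k(z,t)=\sum_{n\in J_k}r_n(t)z^n$ at a point $z_k=(1-1/N_k)e^{i\theta_k}$ with suitably chosen $\theta_k$ shows that, with probability bounded below by a constant independent of $k$,
$$(1-|z_k|^2)|f_t'(z_k)|\log\frac{e}{1-|z_k|^2}\ge c(\log N_k)\cdot c_k|J_k|^{1/2}\longrightarrow\infty.$$
Since distinct blocks involve disjoint Rademacher functions, these events are independent, so Borel--Cantelli gives $f_t\notin\cb_{\log}$ almost surely, which is stronger than $f_t\notin BMOA_{\log}$.

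The main obstacle is step (i): turning an expectation bound on $\mu_{f_t}(S(I))$ at a fixed $I$ into a uniform-in-$I$ almost-sure bound, which requires exponential concentration (Kahane's inequality) plus a reduction to a countable dense family of Carleson boxes together with the correct bookkeeping when summing over dyadic blocks against the logarithmic weight $(\log 2/|I|)^{-2}$. In (ii) the delicate point is to balance $N_k$, $|J_k|$ and $c_k$ so that the $\delta_n$-weighted summability is preserved while the random sum stays coherent enough on each block to absorb the extra $\log$ factor in the $\cb_{\log}$-norm.
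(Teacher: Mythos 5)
Your part (i) is essentially viable and follows a route close in spirit to the paper's: the paper proves Theorem~\ref{random.likeduren-bmoalog} (a Duren-type statement, $\int_0^1(1-r)(\log\frac{1}{1-r})^2[M_\infty(r,f_t')]^2\,dr<\infty$ a.s.) via the Salem--Zygmund sup-norm bound for random blocks plus Hilbert's inequality, and then feeds it into Proposition~\ref{suf-con-BMOAlog}; the $H^\infty$ part comes from the Salem--Zygmund continuity theorem for $\beta>1$. Two cautions on your version. First, your justification of the $H^\infty$ claim is not a valid deduction: $\sum|a_n|^2\log n<\infty$ does \emph{not} imply $f_t\in H^\infty$ a.s.\ (the Paley--Zygmund example quoted in the paper), so "much stronger than that condition" proves nothing by itself; you need either the $\beta>1$ continuity theorem or simply $\sum_k\|S_k\|_\infty\lesssim\sum_k k^{1/2}b_k<\infty$ from your own block bounds. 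Second, "almost-orthogonality of $\{S_k'\}$ in $L^2$" does not survive restriction to the arc $I$: if you pass to the full circle you lose the factor $|I|$, and if you use sup-norms with naive Cauchy--Schwarz you lose one power of $\log(2/|I|)$. What actually closes the high-frequency estimate is the exponential off-diagonal decay $\int_0^1(1-r)r^{2^k+2^j}dr\asymp\max(2^k,2^j)^{-2}$, i.e.\ a Hilbert/Schur bound on the cross terms --- precisely the role Hilbert's inequality plays in the paper's proof. With that substitution your box-splitting argument goes through.

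Part (ii) contains a genuine error. With your normalization $c_k^2|J_k|\asymp(\log N_k)^{-2}$ the quantity you display is
\begin{equation*}
(1-|z_k|^2)\,|f_t'(z_k)|\,\log\frac{e}{1-|z_k|^2}\;\asymp\;c_k|J_k|^{1/2}\log N_k\;=\;(\log N_k)^{-1}\cdot\log N_k\;=\;1,
\end{equation*}
which is bounded, not divergent, so the construction does not leave $\cb_{\log}$. This is not fixable by re-tuning constants within your scheme: a Paley--Zygmund lower bound at a \emph{single point} $z_k$ only yields the $\ell^2$ size $c_k|J_k|^{1/2}$, and to force divergence you would need the per-block budget $\sum_{n\in J_k}a_n^2\delta_n(\log n)^3\asymp\omega_k^2\,\delta_{N_k}\log N_k$ with $\omega_k\to\infty$ to be summable --- impossible whenever $\delta_n\log n\not\to0$ (e.g.\ $\delta_n=(\log\log n)^{-1}$). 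The missing ingredient is the Salem--Zygmund \emph{lower} bound for the supremum of a random trigonometric polynomial, $\max_\theta\bigl|\sum_{n\in J_k}r_n(t)a_ne^{in\theta}\bigr|\gtrsim\bigl(\log|J_k|\bigr)^{1/2}\bigl(\sum_{n\in J_k}|a_n|^2\bigr)^{1/2}$ with probability bounded below; the extra factor $(\log N_k)^{1/2}$ is exactly what lets the per-block budget shrink to $\omega_k^2\,\delta_{N_k}$, which can be made summable with $\omega_k\to\infty$ by choosing $N_k$ so that $\delta_{N_k}$ decays fast. (Note also that your condition $\delta_{N_k}\le2^{-k}$ alone leaves $\sum_k2^{-k}\log N_k$, which need not converge.) This sup-norm lower bound is the engine of the argument in Section~3.4 of \cite{ACP}, which is precisely the proof the paper invokes for part (ii).
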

\par\medskip
Now we pass properly to study the multipliers from $\Dpbmoa$ to
$\Dqbmoa$ ($0<p, q<\infty $).
\par
 If\, $\lambda \ge 2$\, then $BMOA\subset \mathcal
D^\lambda_{\lambda -1}$. Hence, trivially, we have
\begin{equation}
\label{MBMOApqge2} \M (\Dpbmoa ,\Dqbmoa )=\M(BMOA)=BMOA_{\log}\cap
H^\infty ,\quad 2\le p, q<\infty .
\end{equation}
This remains true for
other values of $p$ and $q$.
\begin{theorem}\label{BMOA1<qpleq}
If $1<q<\infty $ and $0<p\le q<\infty $,\, then
$$\M (\Dpbmoa ,\Dqbmoa )=\M(BMOA)=BMOA_{\log}\cap H^\infty .$$
\end{theorem}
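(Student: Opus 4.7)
The plan is to prove the identity $\M(\Dpbmoa,\Dqbmoa)=\M(BMOA)$; the already-known description $\M(BMOA)=BMOA_{\log}\cap H^\infty$ is (\ref{MBMOA}). The backbone of the argument is the embedding
\begin{equation*}
BMOA\subset \mathcal{D}^r_{r-1}\qquad (r>1),
\end{equation*}
which I plan to establish in Section~\ref{Facts-BMOAlog} via the area-function characterization of $BMOA$. Under the hypothesis $q>1$, combining this with the trivial inclusion $\Dqbmoa\subset BMOA$ yields $\Dqbmoa=BMOA$ as sets, and by the closed graph theorem with equivalent norms; the same identification $\Dpbmoa=BMOA$ holds when $p>1$.

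For the inclusion $\M(BMOA)\subset\M(\Dpbmoa,\Dqbmoa)$, take $g\in\M(BMOA)$ and $f\in\Dpbmoa\subset BMOA$. Then $fg\in BMOA$, and the embedding above places $fg$ inside $\Dq$, so $fg\in\Dqbmoa$. The continuity estimate $\|fg\|_{\Dqbmoa}\le C\|f\|_{\Dpbmoa}$ follows from the boundedness of $M_g\colon BMOA\to BMOA$ and the norm equivalence.

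For the reverse inclusion, let $g\in\M(\Dpbmoa,\Dqbmoa)$. Plugging in $f\equiv 1$ gives $g\in\Dqbmoa\subset BMOA$. When $p>1$, the identification $\Dpbmoa=BMOA$ applies on both sides and we immediately get $g\in\M(BMOA)$. The delicate case is $0<p\le 1$, where $\Dpbmoa$ is in general a proper subspace of $BMOA$. Here I would test $M_g$ against the M\"obius family $f_a(z)=\log\frac{e}{1-\bar a z}$, $a\in\D$: the $BMOA$ norm is uniformly bounded by M\"obius invariance, and $\sup_a\|f_a\|_{\Dp}<\infty$ follows from the direct polar estimate
$$\int_0^1(1-r)^{p-1}\int_0^{2\pi}|1-\bar a\,re^{it}|^{-p}\,dt\,dr\le C$$
valid for every $p>0$ (with the natural logarithmic modification at $p=1$). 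Hence $\sup_a\|gf_a\|_{BMOA}\le C$, and evaluating the Bloch estimate $(1-|z|^2)|(gf_a)'(z)|\le C$ at $z=a$, after separating the terms $g'(a)f_a(a)$ and $g(a)f_a'(a)$, yields $g\in H^\infty\cap\cb_{\log}$.

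The main obstacle is the final step in the range $p\le 1$: upgrading the $\cb_{\log}$ control just obtained to the sharper $BMOA_{\log}$ condition, namely that $d\mu_g(z)=(1-|z|^2)|g'(z)|^2\,dA(z)$ be a $2$-logarithmic Carleson measure. To accomplish this I would replace the global M\"obius family by Carleson-box adapted atoms remaining in $\Dpbmoa$ with controlled norm; feeding these into the multiplier inequality and exploiting the decomposition $(fg)'=f'g+fg'$ should produce the required box estimate $\mu_g(S(I))\le C|I|\left(\log\frac{2}{|I|}\right)^{-2}$ for every interval $I\subset\partial\D$.
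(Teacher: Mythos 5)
Your backbone embedding $BMOA\subset\mathcal D^r_{r-1}$ for every $r>1$ is false in the range $1<r<2$, and this breaks the argument precisely in the new cases the theorem is meant to cover. Indeed, the paper recalls (before Theorem~\ref{Hinfty:q<p1}, citing \cite{Gi92} and \cite{Vi}) that for every $q<2$ there is a function in $H^\infty\setminus\Dq$; since $H^\infty\subset BMOA$, one has $BMOA\cap\Dq\subsetneq BMOA$ for all $q<2$, so the identification $\Dqbmoa=BMOA$ holds only for $q\ge 2$. Consequently your forward inclusion collapses for $1<q<2$: knowing $fg\in BMOA$ does not place $fg$ in $\Dq$. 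The step that actually works (and is what the paper does) is to split $(fg)'=f'g+fg'$, bound $\int_\D|f'g|^q(1-|z|)^{q-1}dA$ by $\|g\|_{H^\infty}^q\|f\|_{\Dq}^q$ using Lemma~\ref{inclusionDpB} (i.e.\ $f\in\Dp\cap\cb\subset\Dq$ because $p\le q$), and bound $\int_\D|fg'|^q(1-|z|)^{q-1}dA$ using $g\in\cb_{\log}$ (via $BMOA_{\log}\subset\cb_{\log}$, Proposition~\ref{BMOAlogsubssetBlog}) together with $f\in BMOA\subset H^q$ and the convergence of $\int_0^1(1-r)^{-1}\bigl(\log\frac{e}{1-r}\bigr)^{-q}dr$; this last integral is exactly where the hypothesis $q>1$ enters. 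The same false identification also undercuts your treatment of the reverse inclusion for $1<p<2$.

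Second, in the case $0<p\le 1$ of the reverse inclusion you stop at $g\in H^\infty\cap\cb_{\log}$ and leave the upgrade to $BMOA_{\log}$ as a hope; since $BMOA_{\log}\subsetneq\cb_{\log}$ (Proposition~\ref{BMOAlogsubssetBlog}), this is a genuine missing step, not a routine one. The family you already introduced does the job if used globally on Carleson boxes rather than pointwise at $z=a$: for an interval $I$ with center $\xi$, taking $a=(1-\frac{|I|}{2\pi})\xi$ one has $|f_a(z)|\asymp\log\frac{2}{|I|}$ for all $z\in S(I)$, whence $\log^2\frac{2}{|I|}\int_{S(I)}(1-|z|^2)|g'|^2dA\lesssim\int_{S(I)}(1-|z|^2)|f_a|^2|g'|^2dA$; writing $f_ag'=(f_ag)'-f_a'g$, the two resulting box integrals are $\lesssim|I|$ because $\{f_ag\}$ is bounded in $BMOA$ (by the multiplier hypothesis) and $g\in H^\infty$ with $\{f_a\}$ bounded in $BMOA$. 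This is the paper's Theorem~\ref{inclusion-always}, which moreover requires no case split on $p$ and holds for all $0<p,q<\infty$.
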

\par\medskip When $q<p$ then $0$ is the only multiplier from $\Dpbmoa $ to
$\Dqbmoa $, except in the cases covered by (\ref{MBMOApqge2}).
\begin{theorem}\label{BMOAq<p} If\, $0<q<p<\infty $ and $q<2$, then $$\M (\Dpbmoa ,\Dqbmoa
)=\{ 0\} . $$
\end{theorem}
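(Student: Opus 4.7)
The plan is to argue by contradiction. Suppose $g\in\M(\Dpbmoa,\Dqbmoa)$ with $g\not\equiv 0$. Applying the multiplier to the constant function $f\equiv 1\in\Dpbmoa$ gives $g=g\cdot 1\in\Dqbmoa\subset BMOA\cap \Dq$; in particular $g\in H^1$. Since $g\not\equiv 0$, the boundary function $g(e^{i\theta})$ is nonzero for a.e.\ $\theta\in[0,2\pi)$, and by radial convergence one can choose, for a.e.\ $\theta$, constants $c_\theta>0$ and $r_\theta\in(0,1)$ such that $|g(re^{i\theta})|\ge c_\theta$ for every $r\in[r_\theta,1)$.

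The core of the argument is a radial-integration contradiction: I will exhibit $f\in\Dpbmoa\cap H^\infty$ whose radial $q$-integrals diverge a.e., namely
\begin{equation*}
\int_0^1(1-r)^{q-1}|f'(re^{i\theta})|^q\,dr=\infty \qquad\text{for a.e.\ }\theta.
\end{equation*}
Granted such an $f$, both $gf\in\Dqbmoa\subset\Dq$ and $g\in\Dq$, so Fubini applied to $\int_\D(1-|z|)^{q-1}|(gf)'|^q\,dA$ and $\int_\D(1-|z|)^{q-1}|g'|^q\,dA$ yields
\begin{equation*}
\int_0^1(1-r)^{q-1}|(gf)'(re^{i\theta})|^q\,dr<\infty \quad\text{and}\quad \int_0^1(1-r)^{q-1}|g'(re^{i\theta})|^q\,dr<\infty
\end{equation*}
for a.e.\ $\theta$. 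Since $f\in H^\infty$, the second estimate upgrades to $\int_0^1(1-r)^{q-1}|g'f|^q\,dr\le\|f\|_\infty^q\int_0^1(1-r)^{q-1}|g'|^q\,dr<\infty$. Writing $gf'=(gf)'-g'f$ and applying the elementary inequality $|a+b|^q\le C_q(|a|^q+|b|^q)$ (with $C_q=\max\{1,2^{q-1}\}$, valid for every $q>0$), I obtain $\int_0^1(1-r)^{q-1}|g(re^{i\theta})f'(re^{i\theta})|^q\,dr<\infty$ for a.e.\ $\theta$. The lower bound $|g(re^{i\theta})|\ge c_\theta$ on $[r_\theta,1)$ then forces $\int_{r_\theta}^1(1-r)^{q-1}|f'(re^{i\theta})|^q\,dr<\infty$ for a.e.\ $\theta$, contradicting the divergence hypothesis on $f$. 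Hence $g\equiv 0$.

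The main obstacle is therefore the construction of the test function $f$. When $p\ge 2$, Theorem~1 of \cite{Gi92} supplies an $f\in H^\infty$ with the desired a.e.\ divergent radial $q$-integrals, and the inclusions $H^\infty\subset H^p\subset\Dp$ automatically place $f\in\Dpbmoa\cap H^\infty$. The case $0<p<2$ (in which necessarily $q<p<2$) is more delicate, because Question~\ref{q1} makes explicit that it is presently open whether $\dph$ itself contains such a function. Here I would exploit the extra room afforded by the $BMOA$ framework by appealing to a random power series $f_t(z)=\sum_n r_n(t)a_n z^n$: by Theorem~\ref{random.likesledd-bmoalog}\,(i), taking $\{a_n\}$ with $\sum|a_n|^2(\log n)^3<\infty$ guarantees $f_t\in H^\infty\cap BMOA_{\log}$ almost surely, and by tuning $\{a_n\}$ so as to lie just inside the $\Dp$-threshold while violating the $\Dq$-threshold, a Khinchin--Kahane computation on the expected $q$-radial integrals, combined with a standard zero--one law for Rademacher series on the circle, yields almost surely a realization $f_t\in\Dpbmoa\cap H^\infty$ whose radial $q$-integrals diverge for a.e.\ $\theta$. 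Fixing such a realization completes the proof.
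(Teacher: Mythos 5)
Your radial--integration scheme is sound in itself, and for $p\ge 2$ it does give a complete proof: there you can take the function $f\in H^\infty$ of \cite[Theorem 1]{Gi92} with a.e.\ divergent radial $q$-integrals, note $f\in H^\infty\subset H^p\subset \Dp$ and $f\in H^\infty\subset BMOA$, and the contradiction via $gf'=(gf)'-g'f$ and the lower bound $|g(re^{i\theta})|\ge c_\theta$ goes through (this is precisely the mechanism the paper uses to prove Theorem \ref{Hinfty:q<p2}(b)). The genuine gap is in the range $0<q<p<2$. There your argument requires a function $f\in \Dp\cap BMOA$ with a.e.\ bounded radial behaviour (you even ask for $f\in H^\infty$) whose radial $q$-integrals diverge for a.e.\ $\theta$; since $H^\infty\subset BMOA$, producing such an $f$ in $\dph$ is \emph{exactly} Question \ref{q1}, which the paper states is open. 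Your proposed route to it --- Khinchine on the expected radial integral plus a zero--one law --- does not close this: Khinchine gives that the \emph{expectation} of $\int_0^1(1-r)^{q-1}|f_t'(re^{i\theta})|^q\,dr$ is infinite, and the zero--one law gives that divergence has probability $0$ or $1$, but infinite expectation is compatible with probability $0$ of divergence. To rule that out you would need a converse (Hoffmann--J{\o}rgensen/Kahane-type) inequality for Rademacher sums in the space $L^q((1-r)^{q-1}dr)$, which is delicate (a quasi-Banach space when $q<1$) and is nowhere near a routine step. As written, the case $p<2$ is not proved.

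The paper avoids pointwise radial information altogether. It takes the lacunary function $f(z)=\sum_{n\ge1} n^{-\lambda}z^{2^n}$ with $\max\{\tfrac12,\tfrac1p\}<\lambda\le\tfrac1q$, so that $f\in\Dpbmoa\setminus\Dq$ by Proposition \ref{Had}, randomizes the signs, and compares \emph{area} integrals: by Fubini and Khinchine,
\begin{equation*}
\int_0^1\!\!\int_\D |g(z)f_t'(z)|^q(1-|z|^2)^{q-1}\,dA(z)\,dt \asymp \int_\D |g(z)|^q(1-|z|^2)^{q-1}M_2^q(|z|,f')\,dA(z),
\end{equation*}
which is bounded below by a constant times $\int_{1/2}^1 M_q^q(r,f')(1-r)^{q-1}\,dr=\infty$ (using $M_2(r,f')\asymp M_q(r,f')$ for lacunary series and $M_q(r,g)\gtrsim 1$ near the boundary), while the same quantity is bounded above using the uniform bound on $\|f_t\|_{\Dpbmoa}$, the multiplier hypothesis, and $g\in\Dq$. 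Only $f\notin\Dq$ is needed, not a.e.\ radial divergence, which is why the argument covers all of $0<q<p<\infty$, $q<2$. If you want to keep your approach, you must either restrict to $p\ge2$ or first answer Question \ref{q1}; otherwise the Fubini--Khinchine area-integral comparison is the way to go.
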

\par\medskip
To deal with the remaining case, $0<p\le q\le 1$, we shall use the
above mentioned results about lacunary power series and random power
series. Our main results concerning random power series and
multipliers are contained in the following theorem.
\begin{theorem}\label{randon-bmoalogMdp}
Let $\{ a_n\} _{n=0}^\infty $ be a sequence of complex numbers
satisfying
\begin{equation}\label{rand-log3}\sum_{n=1}^\infty \vert a_n\vert ^2(\log
n)^3<\infty .\end{equation} For $t\in [0,1]$ we set
\begin{equation}\label{ft}f_t(z)=\sum_{n=0}^\infty
r_n(t)a_nz^n,\quad z\in \D ,\end{equation} where the ${r_n}'s$ are
the Rademacher functions. Then, for almost every $t\in [0,1]$, the
function $f_t$ satisfies the following conditions:
\begin{itemize}
\item[(i)] $\int_0^1(1-r)\left (\log \frac{1}{1-r}\right )^2\left [
M_\infty (r, {f_t}^\prime )\right ]^2\,dr<\infty $.
\item[(ii)] $f_t\in BMOA_{\log }\cap H^\infty $.
\item[(iii)] $f_t\in M(\Dpbmoa , \Dqbmoa )$ whenever $0<p\le q$ and
$q>\frac{1}{2}$.
\end{itemize}
\par Furthermore, if $0<q<\frac{1}{2}$ then there exists a
sequence $\{ a_n\} $ which satisfies (\ref{rand-log3}) and such that
$f_t\notin \Dq$, for almost every $t$. Thus, for this sequence $\{
a_n\} $ and for almost every $t$ we have:
\begin{itemize}
\item[(a)] $f_t\in \M(BMOA )$.
\item[(b)]  If\, $0<p\le \lambda $ and
$\lambda >\frac{1}{2}$\, then $f_t\in M(\Dpbmoa , \mathcal
D^\lambda_{\lambda -1}\cap BMOA)$.
\item[(c)]  $f_t\notin M(\Dpbmoa , \Dqbmoa )$ whenever $0<p\le q$.
\end{itemize}
\end{theorem}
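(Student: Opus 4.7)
The strategy is to push the Salem--Zygmund estimate to its sharp logarithmic threshold and then run it through the quadratic H\"older/Carleson arithmetic of $BMOA$ and $BMOA_{\log}$.

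For part (i), I would apply the classical Salem--Zygmund bound to the derivative of the random series: for a.e.\ $t$ there exists $C(t)$ with
$$M_\infty(r,f_t')\le C(t)\sqrt{\log\tfrac{e}{1-r}}\,\Phi(r)^{1/2},\qquad \Phi(r)\ig\sum_{n\ge 1}n^2|a_n|^2 r^{2n-2}.$$
Squaring, multiplying by $(1-r)(\log\tfrac{e}{1-r})^2$, integrating, and using $\int_0^1(1-r)(\log\tfrac{e}{1-r})^3 r^{2n-2}\,dr\asymp(\log n)^3/n^2$ delivers (i). Part (ii) follows at once: the hypothesis dominates $\sum|a_n|^2\log n<\infty$, so the Billard/Salem--Zygmund continuity theorem places $f_t$ in $H^\infty$ a.s., and (i) implies $f_t\in BMOA_{\log}$ via the sufficient condition developed in Section\,\ref{Facts-BMOAlog} (splitting a Carleson box $S(I)$ and using $\log\tfrac{1}{1-r}\ge c\log\tfrac{1}{|I|}$ on the radial range gives $\int_{S(I)}(1-|z|^2)|f'|^2\,dA\lesssim|I|(\log(2/|I|))^{-2}$ as soon as the integral in (i) is finite).

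For (iii), the case $q>1$ is immediate from Theorem\,\ref{BMOA1<qpleq} combined with (ii). Assume $\tfrac12<q\le 1$ and fix $g\in\Dpbmoa$. The $BMOA$-membership of $f_tg$ follows from (\ref{MBMOA}) and (ii); for the Dirichlet norm use $|u+v|^q\le 2^q(|u|^q+|v|^q)$. The term $\int_\D(1-|z|)^{q-1}|f_tg'|^q\,dA$ is bounded by $\n{f_t}_{H^\infty}^q\n{g}_{\Dq}^q$, finite because $g\in\Dq$ by Lemma\,\ref{inclusionDpB}. For $\int_\D(1-|z|)^{q-1}|f_t'g|^q\,dA$, pass to polar coordinates and apply H\"older in $\theta$ with exponents $2/q$ and $2/(2-q)$: since $g\in BMOA\subset\bigcap_{s<\infty}H^s$ with $\n{g}_{H^s}\le C_s\n{g}_{BMOA}$,
$$\int_0^{2\pi}|f_t'|^q|g|^q\,d\theta\lesssim M_2(r,f_t')^q\n{g}_{BMOA}^q=\Phi(r)^{q/2}\n{g}_{BMOA}^q,$$
the key point being that $M_2(r,f_t')$ is deterministic. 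A second H\"older in $r$, with the same exponents, applied to the factorization
$$(1-r)^{q-1}\Phi(r)^{q/2}=\bigl[(1-r)(\log\tfrac{e}{1-r})^3\Phi(r)\bigr]^{q/2}\cdot(1-r)^{q/2-1}(\log\tfrac{e}{1-r})^{-3q/2},$$
reduces the question to the convergence of $\int_0^1(1-r)^{-1}(\log\tfrac{e}{1-r})^{-3q/(2-q)}\,dr$, which holds exactly when $3q/(2-q)>1$, i.e.\ when $q>\tfrac12$.

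For the sharpness claim, choose $|a_n|^2=1/(n(\log n)^4(\log\log n)^2)$ for $n$ large. Then $\sum|a_n|^2(\log n)^3\asymp\sum 1/(n\log n(\log\log n)^2)<\infty$, and comparing dyadic scales gives $\Phi(1-2^{-k})\asymp 4^k k^{-4}(\log k)^{-2}$, so
$$\int_0^1(1-r)^{q-1}\Phi(r)^{q/2}\,dr\asymp\sum_k k^{-2q}(\log k)^{-q}=\infty\quad\text{whenever }q<\tfrac12.$$
Khintchine's lower bound $\mathbb{E}|f_t'(z)|^q\gtrsim\Phi(|z|)^{q/2}$ combined with Fubini forces $\mathbb{E}\n{f_t}_{\Dq}^q=\infty$, and Kahane's zero--one theorem for Rademacher series in quasi-Banach function spaces (if $f_t\in X$ a.s.\ then all moments of $\n{f_t}_X$ are finite) then yields $f_t\notin\Dq$ a.s. Items (a), (b), (c) are now immediate: (a) from (ii) and (\ref{MBMOA}); (b) from (iii) applied to this sequence, which still satisfies (\ref{rand-log3}); (c) because $1\in\Dpbmoa$, so $f_t=f_t\cdot 1\notin\Dq$ rules out the multiplier property. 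The delicate point throughout is the interlocking of H\"older exponents in (iii): the hypothesis is consumed exactly once, and the threshold $q>\tfrac12$ emerges naturally as $3q/(2-q)>1$, matching the sharpness construction on the other side.
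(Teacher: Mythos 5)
Your parts (i)--(iii) essentially retrace the paper's route. For (i) the paper (Theorem \ref{random.likeduren-bmoalog}) works from the Salem--Zygmund partial-sum bound via summation by parts and Hilbert's inequality, rather than from the pointwise inequality $M_\infty(r,f_t')\le C(t)(\log\frac{e}{1-r})^{1/2}M_2(r,f_t')$ you assert; your form is not the standard statement and would itself need proof, but both arguments terminate in the same computation $\int_0^1(1-r)(\log\frac{e}{1-r})^3r^{2n-2}\,dr\asymp(\log n)^3/n^2$ (Lemma \ref{like-Beta}). Your (iii) is precisely Lemma \ref{hinfty+bmoalog+som} with $\alpha=3$: the paper applies one H\"older over the disc with exponents $2/q$ and $2/(2-q)$ where you apply H\"older first in $\theta$ and then in $r$, and both land on the threshold $3q/(2-q)>1$. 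One genuine error in (ii): you justify $f_t\in H^\infty$ a.s.\ by noting that the hypothesis dominates $\sum|a_n|^2\log n<\infty$, but by the Paley--Zygmund example quoted in Section \ref{Random-ps} that condition does \emph{not} imply almost sure boundedness; the Salem--Zygmund continuity theorem requires $\sum|a_n|^2(\log n)^{\beta}<\infty$ for some $\beta>1$. Your hypothesis supplies $\beta=3$, so the conclusion survives, but the threshold you cite is wrong.

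For the sharpness half you take a genuinely different and much heavier route. The paper sets $a_{2^k}=(k+1)^{-1/q}$ and $a_n=0$ otherwise; then $f_t$ is a lacunary series for \emph{every} $t$ whose coefficient moduli are unaffected by the random signs, and Proposition \ref{Had}(i) gives $f_t\notin\Dq$ outright since $\sum(k+1)^{-1}=\infty$ --- no probabilistic input is needed. Your choice $|a_n|^2=1/(n(\log n)^4(\log\log n)^2)$ and the divergence of $\int_0^1(1-r)^{q-1}\Phi(r)^{q/2}\,dr$ for $q<1/2$ are computed correctly, and Khinchine plus Fubini do yield $\int_0^1\|f_t\|_{\Dq}^q\,dt=\infty$. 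But non-integrability of $\|f_t\|_{\Dq}^q$ in $t$ does not by itself force $\|f_t\|_{\Dq}=\infty$ almost surely; you must combine the zero--one law for the tail event $\{f_t\in\Dq\}$ with the Kahane-type theorem that almost sure membership of a Rademacher series implies finiteness of all moments of the norm, and since $\Dq$ with $q<1/2$ is only quasi-Banach you need (and should cite) the quasi-normed extension of that theorem. Your argument can be completed along these lines, but the paper's lacunary choice makes the whole point elementary and deterministic, and items (a)--(c) then follow exactly as in your write-up.
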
\par\medskip
We remark that Theorem\,\@\ref{randon-bmoalogMdp} shows that
Theorem\,\@\ref{BMOA1<qpleq} does not remain true for $q<1/2$.
\par\medskip
Finally, we  turn to consider multipliers in $\M (\Dpbmoa ,\Dqbmoa
)$ given by power series with Hadamard gaps.  We will show that
whenever $0<p\le q\le 1$ the power series with Hadamard gaps in $\M
(\Dpbmoa ,\Dqbmoa )$ coincide with those in $\Dq \cap BMOA_{\log }$
and will obtain also the analogue of
Theorem\,\@\ref{randon-bmoalogMdp} for lacunary power series in
Theorem\,\@\ref{lac-bmoalogMdp}. This will give another proof of the
impossibility of extending Theorem\,\@\ref{BMOA1<qpleq} to $q<1/2$.

\par\bigskip

\section{Preliminary results}\label{preliminaries}
As usual, a sequence of positive integers $\{ n_k\}_{k=0}^\infty $
is said to be lacunar if there exists $\lambda >1$ such that
$n_{k+1}\ge \lambda n_k$, for all $k$. Also,
 by a {\it lacunary power series\/} (also called {\it power series with Hadamard
gaps\/}) we mean a power series of the form
$$\hbox{$f(z)=\sum_{k=0} \sp\infty a_kz\sp {n_k}$\, ($z\in \D $)\,
with $n_{k+1}\ge \lambda n_k$\, for all $k$, for a certain $\lambda
>1$.}$$ For simplicity, we shall let $\mathcal L$ denote the
class of all function $f\in\hol (\D )$ which are given by a lacunary
power series. Several known results on power series with Hadamard
gaps will be repeatedly used along the paper, we collect them in the
following statement, (see \cite{BKV,Zyg, ACP}).
\begin{otherp}\label{Had} Suppose that \,$0<p<\infty $, $\alpha >-1$
and  $f$ is an analytic function in $\D $ which is given by a
power series with Hadamard gaps,
$$\hbox{$f(z)=\sum_{k=0}
\sp\infty a_kz\sp {n_k}$\, ($z\in \D $)\, with $n_{k+1}\ge \lambda
n_k$\, for all $k$ \,($\lambda >1$).}$$ Then:
\par (i)\,
$f\in \Dpa \quad \Longleftrightarrow \quad \sum_{k=0}\sp \infty
n_k^{p-\alpha -1} \vert a_k\vert \sp p <\infty $, and
$$||f-f(0)||^p_{\Dpa }\asymp \sum_{k=0}\sp \infty n_k^{p-\alpha -1}
\vert a_k\vert \sp p.$$
\par (ii)\, $f\in H^\infty$ if and only if $ \sum_{k=0}\sp \infty
\vert a_k\vert  <\infty $, and
$$||f||_{H^\infty}\asymp \sum_{k=0}\sp \infty
\vert a_k\vert.$$
\par (iii) $f\in\cb \Longleftrightarrow \quad \sup_{n}|a_n|<\infty$, and $$\|f\|_{\cb}\asymp \sup_{n}|a_n|.$$
\end{otherp}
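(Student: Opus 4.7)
The plan is to derive all three parts from the classical principle that for lacunary series the $L^p(\partial\D)$ norm of $z\mapsto f(rz)$ is equivalent to its $L^2$ norm for every $0<p<\infty$, with constants depending only on $p$ and the lacunary ratio $\lambda$. This is Zygmund's theorem on lacunary Fourier series; once it is in hand, each of (i)--(iii) reduces to an elementary coefficient calculation indexed by dyadic-type blocks attached to the exponents $n_k$.

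For (i) I would first pass to polar coordinates to get
\[
\|f'\|^p_{A^p_\alpha}\asymp \int_0^1 (1-r)^\alpha M_p(r,f')^p \, r\, dr.
\]
Zygmund's theorem applied to $f'(z)=\sum_k n_k a_k z^{n_k-1}$ gives $M_p(r,f')\asymp M_2(r,f')$ uniformly in $r$, and by Parseval
\[
M_2(r,f')^2=\sum_{k=0}^\infty n_k^2|a_k|^2 r^{2(n_k-1)}.
\]
Because $\{n_k\}$ is lacunary, the last sum is comparable, uniformly in $r$, to its largest term, and the contribution of the $k$th term is concentrated on the ring $I_k=[1-1/n_k,\,1-1/n_{k+1}]$, on which $(1-r)^\alpha\asymp n_k^{-\alpha}$ and $|I_k|\asymp n_k^{-1}$. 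Splitting the radial integral along these blocks, using the estimate $\int_0^1(1-r)^\alpha r^{2n_k}\,dr\asymp n_k^{-\alpha-1}$, and summing yields the claimed equivalence
\[
\|f-f(0)\|^p_{\Dpa}\asymp \sum_{k=0}^\infty n_k^{p-\alpha-1}|a_k|^p.
\]

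Part (ii) is exactly Sidon's theorem: the direction $\|f\|_{H^\infty}\le\sum_k|a_k|$ is trivial, and the reverse bound $\sum_k|a_k|\lesssim\|f\|_{H^\infty}$ is Sidon's classical estimate for bounded lacunary series, which I would quote from \cite{Zyg}. For (iii), I would use the characterization $f\in\cb$ iff $\sup_{0<r<1}(1-r)M_\infty(r,f')<\infty$, together with the Hardy--Littlewood bound $M_\infty(r,f')\asymp\sum_k n_k|a_k|r^{n_k-1}$ valid for lacunary series (see \cite{ACP}). The same blockwise analysis used in (i) shows that $(1-r)\sum_k n_k|a_k|r^{n_k-1}$ is bounded in $r$ iff $\sup_k|a_k|<\infty$, with comparable sizes, giving $\|f\|_\cb\asymp\sup_k|a_k|$.

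The main technical obstacle I expect is the case $0<p<1$ in part~(i), where $L^p$ is only a quasinorm. Zygmund's theorem still supplies $M_p\asymp M_2$ with constants independent of $r$, but some care is needed when recombining the contributions from the blocks $I_k$ using the $p$-triangle inequality and when absorbing the factor $r^{2n_k-2}$ on $I_k$ into $n_k^{-\alpha-1}$. Once this bookkeeping is carried out, the decomposition indexed by the lacunary exponents drives all three statements from the same template and the proof is complete.
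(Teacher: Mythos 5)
The paper offers no proof of Proposition~A at all: it is stated as a compendium of known facts with a pointer to \cite{BKV}, \cite{Zyg} and \cite{ACP}, so there is no internal argument to compare yours against. Your overall strategy --- Zygmund's equivalence $M_p(r,f')\asymp M_2(r,f')$ for lacunary series followed by a radial decomposition along the rings $I_k=[1-1/n_k,\,1-1/n_{k+1}]$ --- is precisely the route taken in those references (see also \cite{MatelPavstu}). Parts (ii) and (iii) are essentially fine: (ii) is Sidon's theorem, and in (iii) the lower half of the bound $M_\infty(r,f')\asymp\sum_k n_k|a_k|r^{n_k-1}$, which you attribute to Hardy--Littlewood, is most easily obtained by applying Sidon's theorem to the dilates $f'(rz)$, or bypassed entirely: the implication $f\in\cb\Rightarrow\sup_k|a_k|<\infty$ is just the Cauchy estimate $n_k|a_k|\le M_\infty(r,f')\,r^{-(n_k-1)}$ evaluated at $r=1-1/n_k$, which needs no lacunarity.

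The genuine gap is in part (i), in the sentence asserting that $\sum_k n_k^2|a_k|^2r^{2(n_k-1)}$ is ``comparable, uniformly in $r$, to its largest term.'' That is false: take $n_k=2^k$ and $a_k=2^{-k}$, so that every term is at most $1$ while the sum is $\asymp\log\frac{e}{1-r}\to\infty$ as $r\to1^-$. Since this claim is what licenses replacing $M_2(r,f')^p$ on $I_k$ by the single term $(n_k|a_k|)^p$, the upper estimate in your computation is unsupported as written. The correct bookkeeping is one-sided. The lower bound $\|f-f(0)\|^p_{\Dpa}\gtrsim\sum_k n_k^{p-\alpha-1}|a_k|^p$ does follow by dropping all but the $k$th term on $I_k$, where $r^{n_k}\ge c$ and $\int_{I_k}(1-r)^\alpha\,dr\asymp n_k^{-\alpha-1}$. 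For the upper bound with $0<p\le2$ one uses the subadditivity $\bigl(\sum_j x_j\bigr)^{p/2}\le\sum_j x_j^{p/2}$ and integrates term by term via $\int_0^1(1-r)^\alpha r^{pn_j}\,dr\asymp n_j^{-\alpha-1}$; but for $p>2$ neither of these devices applies and a further argument is required (a H\"older inequality against a geometrically decaying weight on each ring, or the decomposition arguments of \cite{MatelPavstu} and \cite{BKV}). So the case you flag as delicate, $0<p<1$, is in fact the easy one, and the case $p>2$, which your sketch does not address, is where the real work in part (i) lies.
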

\par
It is also well known that $\mathcal L\cap H^p=\mathcal L\cap H^2$
for any $p\in (0, \infty )$ but $$H^\infty \cap \mathcal L\subsetneq
H^2\cap \mathcal L.$$ In spite of this, for any given lacunary
sequence of positive integers $\{ n_k\}_{k=1}^\infty $ and any
sequence of complex numbers $\{ u_k\}_{k=1}^\infty \in \ell ^2$,
Fournier constructed in \cite{fournier} a function
$f(z)=\sum_{n=0}^\infty a_nz^n\in H^\infty $ with $a_{n_k}=u_k$, for
all $k$. Some properties of the bounded function $f$ which were not
stated in \cite{fournier} will play an important role in the proof
of some of our results. Due to this fact and for sake the
completeness we present a complete proof of Fournier's construction
pointing out some extra properties of the constructed function (for
simplicity we shall restrict to sequences $\{ n_k\} $ satisfying
$n_{k+1}\ge 2n_k$).
\par Let start fixing some notation. The unit circle $\partial \D$
will be denoted by $\mathbb T$. If $g\in L^1(\mathbb T)$ its Fourier
coefficients $\hat g(n)$ are defined by
\begin{equation*}\hat g(n)=\frac{1}{2\pi }\int_{-\pi }^\pi
g(e^{i\theta })e^{-in\theta }\,d\theta ,\quad n\in \mathbb
Z.\end{equation*}
\par
If $n_1<n_2$ are integers we shall write $\lfloor n_1, n_2\rfloor$
for the set of all integers $n$ with $n_1\le n\le n_2$. Also, for
$g(z)=\sum_{k=0}^\infty b_k z^k\in\hol(\D)$ and $n_2>n_1\ge 0$, we
set
    $$
    S_{n_1,n_2}g(z)=\sum_{k=n_1}^{n_2-1}b_kz^k.
    $$

\begin{proposition}\label{le:fournier}
Assume that $\{u_k\}_{k=0}^\infty\in \ell^2$  and let
$\{n_k\}_{k=0}^\infty$ be a sequence of positive integers such that
$n_{k+1}
> 2n_k$, for all $k$. Then, there exists a function $\Psi\in\hol (\D )$ of the
form
$$\Psi(z)=\sum _{n=0}^\infty a_nz^n, \quad z\in \D ,$$ with
the following properties: \vspace{5mm}
\begin{itemize}
\item  [(i)]  $\Psi \in H^\infty$.
\item [(ii)] $a_{n_k}=u_k$, for all $k$.
\item [(iii)] If we define $\Lambda _0=\{n_0\}$ and $\Lambda _k=\lfloor n_{k}-n_{k-1}, n_{k}\rfloor$ for $k>0$, we have that
the sets $\Lambda _k$ are pairwise disjoint and satisfy $\Lambda
_k\subset \lfloor n_{k-1}+1, n_k\rfloor$ for all $k\ge 1$.
Furthermore, $a_n= 0$ if $n \not \in \cup_{k=0}^\infty\Lambda _ k$.
\item [(iv)] There is an absolute constant  $C$ such that $$\left\Vert S_{n_k+1,n_{k+1}+1}\Psi\right\Vert_{H^\infty}\le C |u_k|,\quad
\text{for all $k$}.$$
\end{itemize}
\end{proposition}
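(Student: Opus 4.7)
The proof constructs $\Psi$ as an infinite series $\Psi=\sum_{k=0}^{\infty}\Phi_k$ of trigonometric polynomials, where each $\Phi_k$ has Fourier spectrum contained in the block $\Lambda_k$ and satisfies $\hat{\Phi}_k(n_k)=u_k$. The combinatorial part of (iii) is immediate from the lacunarity hypothesis $n_{k+1}>2n_k$: this gives $n_k-n_{k-1}>n_{k-1}$, hence $n_k-n_{k-1}\ge n_{k-1}+1$, and therefore $\Lambda_k\subset\lfloor n_{k-1}+1,n_k\rfloor$, which in particular yields the pairwise disjointness of the blocks. Once the series is in place, (ii) and (iii) hold by construction.

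For the building blocks, the naive choice $\Phi_k(z)=u_kz^{n_k}$ only produces $\Psi\in H^\infty$ when $\{u_k\}\in\ell^1$, which is strictly stronger than the $\ell^2$ hypothesis. Following Fournier, I would instead take $\Phi_k(z)=u_k\,z^{n_k-m_k}R_k(z)$, with $m_k=n_{k-1}$ and $R_k$ a polynomial of degree $m_k$ whose coefficient of $z^{m_k}$ equals $1$, chosen with a specific shape --- for instance a Fej\'er or de~la Vall\'ee Poussin type kernel suitably localized on $\mathbb{T}$. The role of the extra coefficients within $\Lambda_k$ is to enable mutual cancellation between blocks when the polynomials are summed pointwise on $\mathbb{T}$, even though their Fourier spectra are pairwise disjoint.

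The main obstacle is property (i): proving $\Psi\in H^\infty$ with norm controlled by $\|\{u_k\}\|_{\ell^2}$. A termwise triangle inequality fails since $\|\Phi_k\|_\infty\ge|u_k|$ individually, so such an approach would only give $\ell^1$ control. At the level of $BMOA$ the estimate is rather easy: testing against $h\in H^1$ via the pairing $\langle\Psi,h\rangle=\sum_k u_k\overline{\hat{h}(n_k)}$ and combining the classical Paley inequality $\sum_k|\hat{h}(n_k)|^2\le C\|h\|_{H^1}^2$ for lacunary frequencies with the Cauchy--Schwarz inequality gives $|\langle\Psi,h\rangle|\le C\|\{u_k\}\|_{\ell^2}\|h\|_{H^1}$, which places $\Psi$ in $(H^1)^*=BMOA$. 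The essential and most delicate step in Fournier's argument is the upgrade from $BMOA$ to $H^\infty$, accomplished by exploiting the explicit structure of the polynomials $R_k$ to produce genuine pointwise $L^\infty$ bounds on the partial sums of the series.

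Once (i) is in hand, property (iv) follows immediately. By the disjointness and localization of the blocks, the only part of $\Psi$ with Fourier support in $\lfloor n_k+1,n_{k+1}\rfloor$ is the single block $\Phi_{k+1}$, so $S_{n_k+1,n_{k+1}+1}\Psi=\Phi_{k+1}$; the stated estimate then reduces to the structural bound $\|R_{k+1}\|_\infty\le C$ on the building blocks, combined with the indexing conventions adopted in the construction.
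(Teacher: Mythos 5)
There is a genuine gap, and it sits exactly where you flag it: property (i). Your outline correctly identifies the block structure, disposes of the combinatorics in (iii), and correctly observes that a termwise triangle inequality only yields $\ell^1$ control; but you then defer ``the essential and most delicate step'' (the pointwise $L^\infty$ bound) to an unspecified exploitation of the structure of the kernels $R_k$. That step is the entire content of the proposition, and the mechanism you suggest --- fixed Fej\'er or de la Vall\'ee Poussin type polynomials $R_k$ placed in each block --- cannot deliver it: if $\Vert R_k\Vert_\infty\le C$ with $R_k$ independent of the data, then $\Vert \Phi_k\Vert_\infty\le C|u_k|$ and you are back to summing an $\ell^1$ series; the needed cancellation between blocks must depend on \emph{all} of the previous coefficients $u_0,\dots,u_{k-1}$, not just on the geometry of $\Lambda_k$. (Your $BMOA$ digression also pairs $h$ only against the frequencies $n_k$, which is the pairing for the naive lacunary series $\sum u_kz^{n_k}$ rather than for the block-supported function you are building; in any case it does not feed into the $H^\infty$ conclusion.)

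The idea actually used (Fournier's) is an inductive two-term scheme: one defines on $\T$ the pair $\phi_0(\z)=u_0\z^{n_0}$, $h_0\equiv 1$, and then $\phi_k=\phi_{k-1}+u_k\z^{n_k}h_{k-1}$, $h_k=h_{k-1}-\overline{u_k}\z^{-n_k}\phi_{k-1}$. The elementary identity $|a+vb|^2+|b-\bar v a|^2=(1+|v|^2)(|a|^2+|b|^2)$ gives the exact Pythagorean relation $|\phi_k(\z)|^2+|h_k(\z)|^2=\prod_{j=0}^k(1+|u_j|^2)$, and the $\ell^2$ hypothesis makes the product converge, so the $\phi_k$ are uniformly bounded in $L^\infty(\T)$; a weak-star limit (Banach--Alaoglu) of a subsequence is the boundary function of $\Psi$, and the spectral bookkeeping (your (ii) and (iii)) is preserved in the limit because the Fourier coefficients below $n_j$ stabilize from stage $j$ on. Note also that the $k$-th block of $\Psi$ is precisely $u_{k+1}\z^{n_{k+1}}h_k(\z)$, so (iv) follows from $\sup_k\Vert h_k\Vert_\infty<\infty$ --- i.e.\ from the same identity --- rather than from a bound on a prescribed kernel. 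Without this (or an equivalent) device, your proposal does not prove (i) or (iv).
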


\begin{proof}
The construction depends on the following equality
\cite[p.\,\@402]{fournier}
\begin{equation}\label{equality}\vert a+vb\vert ^2+\vert b-\bar
va\vert =(1+\vert v\vert^2)(\vert a\vert^2+\vert b\vert ^2),\quad a,
b, v\in \mathbb C.\end{equation}
 Let us define inductively the following
sequences of functions on $\T$
\begin{equation}\label{phi0}
\phi_0(\zeta)=u_0\z^{n_0},\quad h_0(\zeta) = 1,\quad \z\in\T,
\end{equation}
and, for $k>0$,
\begin{equation}\label{phik}
\phi_k(\zeta)=\phi_{k-1}(\z)+u_k\z^{n_k}h_{k-1}(\z),\quad h_k(\zeta
)=h_{k-1}(\z)-\overline{u_k}\z^{-n_k}\phi_{k-1}(\z),\quad
(\z\in\T).\end{equation}
\par
Since $n_{k+1}>2n_k$, it is clear that the sets $\Lambda _k$,
$k=1,2,\dots$, are disjoint and that $\Lambda _k\subset \lfloor
n_{k-1}+1, n_k\rfloor$ for all $k\ge 1$.
\par
We claim that that the sequences $\{ \phi _k\}$ and $\{ h_k\} $
satisfy the following properties
\begin{equation}\label{p2}
\widehat{\phi_k}(n)=0,\quad \text{whenever $k\ge 0$ and $n \not \in
\bigcup_{j=0}^k\Lambda_{ j}$}
\end{equation}

\begin{equation}\label{p2p}
 \widehat{h_k}(-n)=0,\quad \text{whenever $k\ge 0$ and $n\ge 1$ and $n \not\in \bigcup_{j=1}^k\Lambda_{ j}$.}
\end{equation}
\begin{equation}\label{p3}
\widehat{\phi_k}(n)=\widehat{\phi_j}(n),\quad
 \text{whenever $k\ge j$ and $n\le n_j$, }
\end{equation}

\begin{equation}\label{p3p}
\widehat{\phi_k}(n_j)=u_j,\quad\text{whenever $k\ge j$ }.
\end{equation}
\par It is clear that (\ref{p2}) and (\ref{p2p}) hold for $k=0,1$. Arguing by induction, assume that
(\ref{p2}) and (\ref{p2p}) are valid for some value of $k\in\N$.
Then,
\begin{equation}\label{skatakaiaposkata} \phi_{k+1}(\z) = \phi_k(\z)+ u_{k+1}\z^{n_{k+1}}h_k(\z) =
 \sum_{j=0}^k\sum _ { n  \in\Lambda_j }\widehat{\phi_k}(n)\z^n+f_k(\z),
 \end{equation}
 where $f_k(\z)=u_{k+1}\z^{n_{k+1}}h_k(\z)$.
By the induction hypotheses $\widehat{f_k}(n)=0$ if
$n\notin\Lambda_{k+1}$, which gives (\ref{p2}) for $k+1$. The proof
of (\ref{p2p}) is analogous.
 Now, (\ref{p3}) follows from (\ref{phik}), (\ref{p2}) and  the fact that the sets $\Lambda_k$ are disjoint and (\ref{p2p}).
 Using  again that the sets $\Lambda_k$ are disjoint, (\ref{p3}), (\ref{phik}) and (\ref{phi0}), we deduce (\ref{p3p}).

\par\medskip We have that
$$|\phi_0(\zeta)|^2+|h_0(\zeta)|^2=1+|u_0|^2,$$ so  if we assume that
$|\phi_k(\zeta)|^2+|h_k(\zeta)|^2=\prod_{j=0}^k(1+|u_j|^2)$, bearing
in mind (\ref{equality}) and (\ref{phik}), it follows that
$$|\phi_{k+1}(\zeta)|^2+|h_{k+1}(\zeta)|^2=\left(1+|u_{k+1}|^2\right)\left(|\phi_k(\zeta)|^2+|h_k(\zeta)|^2\right)=\prod_{j=0}^{k+1}(1+|u_j|^2),$$
hence we have proved by induction that
\begin{equation*}\label{p1}|\phi_k(\zeta)|^2+|h_k(\zeta)|^2=\prod_{j=0}^k(1+|u_j|^2),\quad \zeta\in \mathbb T,\quad k=0, 1, 2, \dots
.\end{equation*} This and the fact that $\{u_k\}_{k=0}^\infty\in
\ell^2$ imply that $\{ h_k\} _{k=0}^\infty $ and $\{ \phi_k\}
_{k=0}^\infty $ are uniformly bounded sequences of functions in
$L^\infty (\mathbb T)$. Then, using the Banach-Alaoglu theorem,
(\ref{p2}), (\ref{p3}) and (\ref{p3p}), we deduce that a subsequence
of $\{ \phi _k\} $ converges in the weak star topology of $L^\infty
(\T )$ to a function $\phi\in L^\infty (\T )$ with $\hat\phi (n)=0$
for all $n<0$, and $\hat\phi (n_k)=u_k$ for all $k$. Then if we set
$a_n=\hat\phi (n)$ ($n\ge 0$) it follows that the function $\Psi $
defined by
$$\Psi (z)=\sum_{n=0}^\infty a_nz^n,\quad z\in \D,$$ is analytic in
$\D $ and satisfies (i), (ii) and (iii).
 \par Finally, we shall prove (iv). Using(\ref{p3}) and
 (\ref{skatakaiaposkata}), we see that for any $\zeta \in \T$, we
 have
 \begin{equation*}\begin{split}
 & S_{n_k+1,n_{k+1}+1}\Psi(\zeta)=\sum_{n=n_{k}+1}^{n_{k+1}}\widehat{\Psi}(n)\zeta^n=
 \sum_{m=n_k+1}^{n_{k+1}}\left(\lim_{m\to\infty}\widehat{\phi_{m}}(n)\right)\zeta^n
 \\ & =\sum_{n=n_{k}+1}^{n_{k+1}}\widehat{\phi_{k+1}}(n)\zeta^n =f_k(\zeta)=u_{k+1}\z^{n_{k+1}}h_k(\z),
 \end{split}\end{equation*}
 which, bearing in mind that $\sup_{k}\Vert h_k\Vert_\infty =C<\infty $, implies
 \begin{equation*}\begin{split}
 \left\Vert S_{n_k+1,n_{k+1}+1}\Psi\right\Vert_{H^\infty}
 =  |u_{k+1}|||h_k||_{L^\infty{(\T)}}
   \le C |u_{k+1}|.
 \end{split}\end{equation*}
 This finishes the proof.
 \end{proof}
 \par\medskip Our work will also make use
of  the Rademacher functions  $\{r_n(t)\}_{n=0}^\infty$ which are
are defined by
\[ r_0(t)=\left \{  \begin{array}{ll}
1,& \mbox{\,\,\,\,\,if\, $0<t<1/2$}\\
-1,&\mbox{\,\,\,\,\,if\, $1/2<t<1$}\\0,&\mbox{\,\,\,\,\,if\,
$t=0,1/2,1.$}\end{array}\right .
\]
\[
r_n(t)=r_0(2^nt),\quad n=1,2, \dots .
\]
See, e.~g., \cite[Chapter~V, Vol.~I]{Zyg} or \cite[Appendix~A]{D}
for the properties of these functions. In particular, we shall use
Khinchine's inequality which we state as follows.
\begin{otherp}[Khinchine's inequality]\label{Khinchine}
If $\{ c_k\} _{k=1}^\infty \in \ell ^2$ then the series $\sum
_{k=1}^\infty c_kr_k(t)$ converges almost everywhere. Furthermore,
for $0<p<\infty $ there exist positive constants $A_p, B_p$ such
that for every sequence $\{ c_k\} _{k=0}^\infty \in \ell ^2$ we have
$$A_p\left (\sum_{k=0}^\infty \vert c_k\vert ^2\right )^{p/2}
\le \int_0^1\left \vert\sum _{k=0}^\infty c_kr_k(t)\right \vert
^pdt\le B_p\left (\sum_{k=0}^\infty \vert c_k\vert ^2\right
)^{p/2}.$$
\end{otherp}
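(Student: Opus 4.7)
The plan is to follow the classical route: establish the $L^2$ identity and almost-sure convergence from orthogonality and independence, obtain the upper bound for all $p$ from a sub-Gaussian estimate on the partial sums, and then derive the lower bound for $0<p<2$ via the usual Littlewood trick. Throughout, write $S_N(t)=\sum_{k=0}^N c_k r_k(t)$ and $\sigma^2=\sum_{k=0}^\infty |c_k|^2$.

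Since $\int_0^1 r_n(t)\,dt=0$ and $\int_0^1 r_n(t)r_m(t)\,dt=\delta_{nm}$ by independence, the Rademacher system is orthonormal in $L^2(0,1)$; splitting into real and imaginary parts,
$$\int_0^1 |S_N(t)|^2\,dt=\sum_{k=0}^N |c_k|^2,$$
which yields the case $p=2$ with $A_2=B_2=1$ and, since $\{S_N\}$ is an $L^2$-bounded martingale with respect to the natural dyadic filtration, also the almost-sure convergence of $\sum c_k r_k$ (alternatively, apply Kolmogorov's three-series theorem). For the upper bound in the regime $p>2$, reduce to real coefficients and use independence together with the elementary inequality $\cosh x\le e^{x^2/2}$ to obtain
$$\int_0^1 e^{s S_N(t)}\,dt=\prod_{k=0}^N \cosh(s c_k)\le e^{s^2\sigma^2/2},\qquad s\in\R.$$
A Chernoff-type argument then gives $|\{t\in[0,1]:|S_N(t)|>\lambda\}|\le 2e^{-\lambda^2/(2\sigma^2)}$, and integrating $p\lambda^{p-1}$ against this sub-Gaussian tail produces $\|S_N\|_{L^p}\le B_p\sigma$ for every $p>0$. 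Complex coefficients are handled by the triangle inequality applied to the real and imaginary parts.

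For the lower bound, the case $p\ge 2$ is immediate from Jensen: $\sigma=\|S_N\|_2\le\|S_N\|_p$. For $0<p<2$, pick any $r>2$ (say $r=4$) and use the log-convexity of the $L^q$-norms (a form of H\"older's inequality) to write
$$\sigma^2=\|S_N\|_2^2\le \|S_N\|_p^{2\theta}\,\|S_N\|_r^{2(1-\theta)},$$
with $\theta\in(0,1)$ chosen so that $\tfrac{1}{2}=\tfrac{\theta}{p}+\tfrac{1-\theta}{r}$. Inserting the upper bound $\|S_N\|_r\le B_r\sigma$ just proved and solving for $\|S_N\|_p$ gives $\|S_N\|_p\ge A_p\sigma$ for some $A_p>0$ depending only on $p$. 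Finally, the inequalities pass from $S_N$ to the full series by Fatou's lemma (lower bound) and by dominated convergence applied to the uniformly $L^p$-bounded sequence $\{S_N\}$ (upper bound). The only genuinely delicate step is the sub-Gaussian estimate; the rest is routine orthogonality, H\"older, and calculus.
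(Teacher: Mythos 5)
This statement is labeled Proposition~B in the paper precisely because it is quoted as a known classical result: the authors give no proof at all, referring instead to Zygmund's \emph{Trigonometric Series} and the appendix of Duren's $H^p$ book. Your proposal therefore cannot be compared with an argument in the paper; what it does is supply a self-contained proof along the standard textbook lines, and that proof is essentially correct. The orthonormality of the Rademacher system gives the $p=2$ identity and (via the $L^2$-bounded dyadic martingale, or the three-series theorem) the almost-everywhere convergence; the identity $\int_0^1 e^{sS_N(t)}\,dt=\prod_k\cosh(sc_k)\le e^{s^2\sigma^2/2}$ plus Chernoff and integration of the tail gives the upper bound for all $p$; and the Littlewood/Lyapunov interpolation $\|S_N\|_2\le\|S_N\|_p^{\theta}\|S_N\|_r^{1-\theta}$ with $r>2$ gives the lower bound for $0<p<2$. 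All of these steps are sound, including the reduction to real coefficients.

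One small correction to the final limit passage: you have the two tools swapped. Since $|S_N|^p\to|S|^p$ almost everywhere, Fatou's lemma gives $\int|S|^p\le\liminf_N\int|S_N|^p\le B_p^p\sigma^p$, i.e.\ it is the \emph{upper} bound that Fatou transfers to the full series. For the lower bound, uniform $L^p$-boundedness of $\{S_N\}$ does not by itself provide a dominating function; you should either invoke uniform integrability of $\{|S_N|^p\}$ (which follows from the uniform $L^{2p}$ bound you have already established, via Vitali's convergence theorem) to get $\int|S_N|^p\to\int|S|^p$, or, more simply, run the interpolation argument directly on the limit function $S$, using $\|S\|_2=\sigma$ (from $L^2$ martingale convergence) together with the already-proved bound $\|S\|_r\le B_r\sigma$. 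With that repair the proof is complete.
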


\bigskip
\section{Multipliers on $\dpb$}\label{multbloch}
\begin{Pf}{\,\em{Theorem \ref{th:blochdp}.}}\,
(i)\, Assume that $g\in \M(\dpb,\dqb)$. From now and throughout the paper we shall denote by  $\varphi _a$  the
M\"obius transformation which interchanges the origin and $a$,
$$\varphi_a (z)=\frac{a-z}{1-\overline{a}z},\quad z\in \D.$$
\par A simple calculation shows that
$$\sup_{a\in\D}||\varphi_a||_{\dpb}<\infty.$$
So, for any $a,z\in\D$
 \begin{equation}
 \begin{split}\label{nolabel}
 &(1-|z|^2)|\varphi_a '(z)g(z)|= (1-|z|^2)|(\varphi_a\cdot g)'(z) - \varphi_a (z)g'(z)| \\
 &\le \|\varphi_a  g\|_{\cb \cap \Dq} + (1-|z|^2)|\varphi_a (z)g'(z)|\lesssim\|\M _g\|_{(\cb \cap \Dp \rightarrow \cb \cap
 \Dq)}+||g||_{\cb}<\infty .
\end{split}
 \end{equation} Since $(1-\vert a\vert^2)|\varphi_a '(a)\vert =1$,
 taking $z=a$ in (\ref{nolabel}) we obtain $$\vert g(a)\vert \lesssim\|\M _g\|_{(\cb \cap \Dp \rightarrow \cb \cap
 \Dq)}+||g||_{\cb}<\infty ,$$ for any $a\in\D $. Thus, $g\in H^\infty$.
\par Next consider the family of test functions, $ f_{\theta}(z)=\log\frac{1}{1-ze^{-i\theta}}$, $\theta\in [0,2\pi)$. A calculation shows that
 $\{f_{\theta}\}_{\theta\in [0,2\pi)}$ is  uniformly bounded in  $\dpb$.
Therefore,
\begin{equation*}
\begin{aligned}
A &=\sup_{\theta\in[0,2\pi)}\|gf_\theta\|_ \cb \leq
\sup_{\theta\in[0,2\pi)}\|gf_\theta\|_{\cb \cap \Dq}
\\ & \leq
 \|\M _g\|_{(\cb \cap \Dp \rightarrow \cb \cap \Dq)}\sup_{\theta\in[0,2\pi)} \|f_\theta\|_{\cb \cap \Dp}<\infty,
 \end{aligned}
\end{equation*}
which implies that
\begin{equation*}
\begin{split}
(1-|z|^2)|g'(z)f_\theta(z)|&=(1-|z|^2)|g'(z)f_\theta(z)+
g(z)f_\theta '(z)-g(z)f_\theta '(z)|
\\ &\leq A +(1-|z|^2)|g(z)f_\theta '(z)|
\\&=A+||g||_{H^\infty}\sup_{\theta\in [0,2\pi)}|| f_{\theta}||_{\cb}
\\ &<\infty ,\quad\text{for all \, $z \in \D$  and $\theta\in[0,2\pi)$.}
\end{split}
\end{equation*}
Finally, given $z\in \D$  choose $e^{i\theta}=\frac{z}{|z|}$ to
deduce that
$$\sup_{z\in\D}|g'(z)|(1-|z|)\log\frac{1}{1-|z|}<\infty,$$
which together the fact that $g\in H^\infty$ gives that $g\in
\M(\cb)$.
\par\medskip Suppose now that $g\in \M(\cb)$ and take $f\in \cb \cap
\Dp$. Then $fg\in \cb $. Using Lemma~\ref{inclusionDpB} and the
closed graph theorem, we obtain
\begin{equation}
\begin{aligned}\label{eq:dpparthinfty}
&\int _\D |(fg)'(z)|^{q}(1-|z|^2)^{q-1} dA(z)
\\ & \lesssim \int _\D |f'(z)g(z)|^{q}(1-|z|^2)^{q-1}dA(z)
+ \int _\D |g'(z)f(z)|^{q}(1-|z|^2)^{q-1}dA(z)
\\ & \lesssim \|g\|_{H^\infty}^q\|f\|_{\dpb}^q
+ \int _\D |f(z)g'(z)|^{q}(1-|z|^2)^{q-1}dA(z).
\end{aligned}
\end{equation}
We shall distinguish two cases to deal with the last integral which
appears in (\ref{eq:dpparthinfty}). First, if $1<q\leq 2$, bearing in mind
(\ref{2}) and the fact that $g\in \cb_{\log}$, we see that
\begin{equation}
\begin{aligned}\label{eq:dppart2hinfty}
 \int _\D |f(z)g'(z)|^{q}(1-|z|^2)^{q-1}dA(z) \lesssim & \int _0 ^1  \frac{1}{(1-r)^q\log ^q \frac{e}{1-r}} M_q^q(r,f)(1-r)^{q-1}\,dr
  \\ \lesssim & \|f\|_{\Dq}^q  \int _0 ^1  \frac{1}{(1-r)\log ^q \frac{e}{1-r}} \,dr
\\  \lesssim & \|f\|_{\dpb}^q .
\end{aligned}
\end{equation} \par On the other hand,  if  $2< q< \infty$,  then using that $g\in \cb_{\log}$ and the well known fact that
  $$\M_q(r,f)\le C||f||_{\cb}\left(\log\frac{1}{1-r} \right)^{1/2},\quad 0<r<1,$$ (see, e.\,\@g.,  \cite{ClMg}) we get
\begin{equation}
\begin{aligned}\label{eq:dppart3hinfty}
 \int _\D |f(z)g'(z)|^{q}(1-|z|^2)^{q-1}dA(z) \lesssim &   \int _0 ^1  \frac{1}{(1-r)^q\log^q\frac{e}{1-r}} M_q^q(r,f)(1-r)^{q-1}\,dr
\\  \lesssim & ||f||^q_{\cb}   \int _0 ^1    \frac{1}{(1-r)\log^{q/2}\frac{e}{1-r}}\, dr < \infty.
\end{aligned}
\end{equation}
Joining (\ref{eq:dpparthinfty}) (\ref{eq:dppart2hinfty}) and (\ref{eq:dppart3hinfty}),
we see that in any case we have $fg\in \Dq$ and, hence, $fg\in
\cb\cap\Dq$. Thus, we have proved that $g\in \M(\cb\cap \Dp, \cb\cap
\Dq)$ finishing the proof.
 \medskip
\par (ii)\,
 We borrow ideas from \cite[Theorem 12]{GaGiPe}. We shall distinguish three cases.\vspace{5 mm} \newline
{\bf{\large{ Case 1. $\mathbf{2<q<\infty}.$}}} Assume that $g\in
\M(\dpb, \dqb)$ and $g \not\equiv 0$. By the proof of \cite[Theorem
K]{GaGiPe} (see also the proofs of \cite[Theorems $1.6$ and
$1.7$]{GP2}), it follows that there exists a function  $f\in\Dp$,
given by a lacunary power series, with $f(0)\neq 0$, and  such that
its sequence of ordered zeros $\{z_n\}$ (that is, the $z_n's$ are
ordered so that $\vert z_1\vert\le \vert z_2\vert \le \vert z_3\vert
\dots $) satisfies
$$\prod_{n=1} ^ N  \frac{1}{|z_n|} \neq o\Biggl(\log{N}\Biggr)^{\frac12 -\frac1q}.$$
Since $f$ is given by a lacunary power series, by Proposition
\ref{Had}, the sequence of its Taylor coefficients is  in $\ell^p$.
This implies that $f\in\dpb$.
 If $\{w_n\}$ is the sequence of non-zero zeros of $gf$ arranged so
 that $\vert w_1\vert\le \vert w_2\vert \le \vert w_3\vert
\dots $, we have that $|w_n| \leq |z_n|$, for all $n$, which gives
that
\begin{equation*}
\prod_{n=1} ^ N \frac{1}{|w_n|} \geq \prod_{n=1} ^ N
\frac{1}{|z_n|},
\end{equation*}
 hence
 \begin{equation*}
 \prod_{n=1} ^ N \frac{1}{|w_n|} \neq o\Biggl(\log {N}\Biggr)^{\frac12 -\frac1q}.
 \end{equation*}
This together with \cite[Theorem 1.6]{GP2} implies that $fg \notin
\Dq$. This is a contradiction. Thus, $g\equiv 0$.

\vspace{5mm}  {\bf{\large{Case 2. $\mathbf{0<q\leq 2 <p.}$}}} The
proof is similar to that of the case 1. Suppose that $g\not\equiv 0$
and $g\in \M(\dpb ,\dqb )$. Take
 $\gamma\in\left(0,\frac{1}{2}-\frac{1}{p}\right)$.
Then, by  the proof of \cite[Theorem K]{GaGiPe} and Proposition
\ref{Had}, there is a function  $f\in\dpb$, represented by a
lacunary series, with $f(0)\neq 0$ whose sequence of ordered  zeros
$\{z_n\}$ satisfies \begin{equation}\label{zerosgamma}\prod_{n=1} ^
N \frac{1}{|z_n|} \neq o\Biggl(\log{N}\Biggr)^{\gamma
}.\end{equation}
\par
 Let $\{w_n\}_{n=1}^\infty$ be the sequence of ordered
non-zero zeros of $fg$. Since $fg\in \Dq $ and $q\le 2$, it follows
that $fg\in H^q$ and, hence, $\{w_n\}_{n=1}^\infty$ satisfies the
Blaschke condition which is equivalent to saying that
\begin{eqnarray*}
\prod_{k=1}\sp N \frac{1}{\vert w_k\vert} =\og(1),\quad\hbox{as
$N\to\infty$.}
\end{eqnarray*}
This is in contradiction with (\ref{zerosgamma}), because any zero
of $f$ is also a zero of $fg$. Consequently,  $g\equiv 0$.

\vspace{5mm} {\bf{\large{ Case 3. $\mathbf{0<p\leq 2}.$}}} Suppose
that $g\not\equiv 0$ and $g \in \M (\cb \cap \Dp,\cb \cap \Dq)$.
Take $a_n=\frac{1}{n^{1/p +\varepsilon}}$ with $0<\varepsilon
<\tfrac1q -\tfrac1p$ and $f(z) = \sum _{n=1}^{\infty} a_n z^{2^n}$ .
Since $\sum _{n=1}^{\infty}a_n ^p < \infty $ and  $\sum
_{n=1}^{\infty}a_n ^q = \infty $, then by Proposition \ref{Had},
$f\in \dpb\setminus \Dq$.
\par Let $\{ r_k(t)\} $ be the Rademacher functions and let $f_t(z)=\sum _{k=1}^{\infty}r_k(t)a_kz^{2^k}$. By Proposition \ref{Had} (iii)
$$ \|f\|_{\cb}\asymp \sup_{n}|a_n|\asymp \|f_t\|_{\cb},\quad t\in[0,1]$$
and
\begin{equation}
\|f_t\|_{H^2}^{2p} = \left (\sum _{k=0}^{\infty} |a_k|^2\right)^p
\leq \left (\sum _{k=0}^{\infty} |a_k|^p\right)^2 \asymp
\|f_t\|_{\Dp}^{2p} \asymp \|f\|_{\Dp}^{2p},\quad t\in[0,1]
\end{equation}
Then for any $t\in[0,1]$,  it follows that
\begin{equation}
\int _ {\D} |(gf_t)'(z)|^q(1-|z|^2)^{q-1}dA(z) \lesssim
\|f_t\|_{\Dp} ^q + ||f_t||^q_{\cb}\asymp \|f\|_{\Dp} ^q
+||f||^q_{\cb}<\infty.
\end{equation}
So, by Fubini's theorem, Khinchine's inequality and the fact that
$g\in \Dq$, we obtain
\begin{equation}\label{eq:4}
\begin{aligned}
& \int_0^1 \int _ \D|gf'_t(z)|^q(1-|z|^2)^{q-1}dA(z)dt \\& \lesssim
\int_0^1 \int _ \D|(gf_t)'(z)|^q(1-|z|^2)^{q-1}dA(z)dt + \int_0^1
\int _ \D|f_tg'(z)|^q(1-|z|^2)^{q-1}dA(z)dt \\& \lesssim
\|f\|_{\dpb}^q +\int_ \D |g'(z)|^q \int _
0^1|f_t(z)|^q(1-|z|^2)^{q-1}dt dA(z) \\ & \lesssim \|f\|_{\dpb}^q
+\int_ \D |g'(z)|^q M^q_2(|z|,f)(1-|z|^2)^{q-1} dA(z) \\ & \lesssim
\|f\|_{\dpb}^q +\|f\|_{\Dp}^q \int_ \D |g'(z)|^q(1-|z|^2)^{q-1}
dA(z)
\\ & \lesssim \|f\|_{\dpb}^q.
\end{aligned}
\end{equation}
On the other hand, since $g\not\equiv 0$, there exists a  positive
constant $C$ such that $ M_q ^q (r,g) \geq C$, $1/2 <r<1$. Using
Fubini's theorem, Khinchine's inequality and bearing in mind that
$f'$ is also given by a power series with Hadamard gaps (thus
$M_2(r,f')\asymp M_q(r,f'))$ we have that

\begin{equation}
\begin{aligned}
& \int _ 0 ^1 \int _ \D |gf_t'(z)|^q(1-|z|^2)^{q-1}dA(z)dt \\ & =
\int _ \D |g(z)|^q(1-|z|^2)^{q-1}\left(\int _ 0 ^1 |f_t'(z)|^q
dt\right) dA(z) \\& \asymp \int _ \D
|g(z)|^q(1-|z|^2)^{q-1}M^q_2(|z|,f') dA(z) \\ & \geq C \int _{1/2}^1
M^q_q(r,g)M^q_q(r,f')(1-r^2)^{q-1}dr \\ & \geq C \int _{1/2}^1
M^q_q(r,f')(1-r^2)^{q-1}dr = + \infty .
\end{aligned}
\end{equation}
 This is in contradiction with
(\ref{eq:4}). It follows that $g\equiv 0$.
\end{Pf}

\par\medskip We remark that the argument used to prove the inclusion
$\M(\cb\cap\Dp , \cb\cap\Dq )\subset \M(\cb )$ in the proof of
Theorem~1~(i) works for any values of $p$ and $q$, that is we have
$$\M(\cb\cap\Dp , \cb\cap\Dq )\subset \M(\cb ),\quad 0<p, q<\infty .$$
We do not have a complete characterization of the space
$\M(\cb\cap\Dp , \cb\cap\Dq )$ in the case $0<p\le q\le 1$, however we
find a sharp sufficient condition on a function $g$ to lie in this
space of multipliers. We note that Theorem \ref{pr:1} is a byproduct of part (ii) of the following stronger result.
\begin{proposition}\label{pr:bloga}
Let $0<p\le q\le 1$, $\alpha\in\left(\frac{1}{q},\infty\right)$ and $g\in\hol(\D)$. Then,
\par $(i)$\, If $g\in \cb_{\log,\alpha}\cap H^\infty$, then $g\in\M(\dpb, \dqb)$.
\par $(ii)$\, $\left(\cb_{\log,\frac{1}{q}}\cap H^\infty\right)\setminus\Dq\neq\{0\}$.
\end{proposition}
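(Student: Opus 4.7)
\emph{Proof plan.} The plan is to prove parts (i) and (ii) separately, with part (ii) further split into the cases $0<q<1$ (where a direct lacunary example works) and $q=1$ (the main obstacle, requiring Fournier's construction). For \textbf{part (i)}, I would separately show $fg\in\cb$ and $fg\in\Dq$. The first is immediate, since $\alpha>1/q\ge 1$ combined with (\ref{eq:logemb}) gives $g\in\cb_{\log}\cap H^\infty=\M(\cb)$ by (\ref{MB}). For the second I would expand $(fg)'=f'g+fg'$ and use subadditivity $|(fg)'|^q\le|f'g|^q+|fg'|^q$ (valid since $q\le 1$): the contribution of $f'g$ is dominated by $\|g\|_{H^\infty}^q\|f\|_{\Dq}^q$, which is finite by Lemma~\ref{inclusionDpB}. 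The contribution of $fg'$ is controlled by combining the $\cb_{\log,\alpha}$-bound $|g'(z)|\lesssim(1-|z|)^{-1}\bigl(\log\frac{e}{1-|z|}\bigr)^{-\alpha}$ with the fact that $M_q(r,f)$ is uniformly bounded, because $f\in\Dq\subset H^q$ by (\ref{2}) (as $q\le 1<2$); this reduces the integral to
\[\int_{\D}|fg'(z)|^{q}(1-|z|)^{q-1}\,dA(z)\lesssim\int_{0}^{1}\frac{dr}{(1-r)\bigl(\log\frac{e}{1-r}\bigr)^{q\alpha}},\]
which is finite precisely when $q\alpha>1$, i.e., when $\alpha>1/q$.

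For \textbf{part (ii) when $0<q<1$} I would exhibit the lacunary example $g(z)=\sum_{k\ge 2}k^{-1/q}z^{2^{k}}$. Since $1/q>1$, Proposition~\ref{Had}(ii) gives $g\in H^\infty$, while Proposition~\ref{Had}(i) gives $\|g\|^{q}_{\Dq}\asymp\sum_{k\ge 2}k^{-1}=\infty$, so $g\notin\Dq$. To verify $g\in\cb_{\log,1/q}$, I would fix $r\in(1/2,1)$ and pick $N$ with $2^{N}\asymp 1/(1-r)$, splitting $(1-r)|g'(z)|\le(1-r)\sum_{k}k^{-1/q}2^{k}r^{2^{k}-1}$ at $k=N$: the low-$k$ terms form a geometric sum in $2^{k-N}$ peaked at $k=N$ of total size $\asymp N^{-1/q}$, and the high-$k$ terms decay super-exponentially via $r^{2^{k}}\le e^{-2^{k-N}}$. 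Hence $(1-r)|g'(z)|\lesssim(\log\tfrac{1}{1-r})^{-1/q}$, as required.

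The \textbf{case $q=1$ is the main obstacle}, because Proposition~\ref{Had} forces $H^\infty\cap\mathcal{L}\subset\mathcal{D}^{1}_{0}$, so no lacunary example works. I would apply Fournier's Proposition~\ref{le:fournier} with $n_{k}=2^{k}$ and $u_{k}=1/k\in\ell^{2}$, obtaining $\Psi\in H^\infty$ with $\widehat{\Psi}(2^{k})=1/k$ and block bounds $\|T_{k}\|_{H^\infty}\le C/k$ for $T_{k}=S_{n_{k-1}+1,n_{k}+1}\Psi$. Writing $T_{k}(z)=z^{n_{k}-n_{k-1}}Q_{k}(z)$ with $\deg Q_{k}\le n_{k-1}$ and $\|Q_{k}\|_{H^\infty}\le C/k$, Bernstein's inequality yields $|T_{k}'(z)|\lesssim(2^{k}/k)r^{2^{k-1}}$, so the same dyadic split as in the previous case gives $\Psi\in\cb_{\log}$. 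The truly hard step is to show $\Psi\notin\mathcal{D}^{1}_{0}=\{f:\int_{\D}|f'|\,dA<\infty\}$, since the auxiliary Fournier coefficients could a priori cancel the leading blocks in the $L^{1}$-norm. I would attempt this by exploiting the explicit product form $T_{k}(\zeta)=u_{k}\zeta^{n_{k}}h_{k-1}(\zeta)$ together with the identity $|\phi_{k}(\zeta)|^{2}+|h_{k}(\zeta)|^{2}=\prod_{j\le k}(1+|u_{j}|^{2})$ from the proof of Proposition~\ref{le:fournier}, which provides a uniform pointwise lower bound on $|h_{k-1}|$, and hence on $|T_{k}|$, on a substantial subset of $\T$; combined with the block disjointness this should force $\int_{\D}|\Psi'|\,dA\gtrsim\sum_{k}k^{-1}=\infty$.
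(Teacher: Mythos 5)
Parts (i) and (ii) for $0<q<1$ are correct and essentially identical to the paper's argument (the paper verifies $g\in\cb_{\log,1/q}$ for the lacunary example by citation rather than by your dyadic-splitting computation, but the computation is fine). For $q=1$ you also follow the paper's strategy — Fournier's construction plus a Bernstein-type block estimate for $\Psi\in\cb_{\log}$ — but your plan for the crucial step, $\Psi\notin\mathcal{D}^1_0$, has a genuine gap. First, a small technical point: with $n_k=2^k$ you have $n_{k+1}=2n_k$, which violates the strict hypothesis $n_{k+1}>2n_k$ of Proposition~\ref{le:fournier} (the blocks $\Lambda_k=\lfloor n_k-n_{k-1},n_k\rfloor$ then share endpoints); take $n_k=4^k$ as the paper does.

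The real problem is the last step. The identity $|\phi_k(\zeta)|^2+|h_k(\zeta)|^2=\prod_{j\le k}(1+|u_j|^2)$ gives a pointwise \emph{upper} bound on $|h_{k-1}|$, not a lower bound; to get $|h_{k-1}|\ge c$ even on a set of positive measure you would need an extra argument (e.g.\ a Chebyshev estimate using $\Vert h_k\Vert_{L^2(\T)}^2\ge|\widehat{h_k}(0)|^2=1$). More seriously, even granting $|T_k|\gtrsim|u_k|$ on a substantial subset of $\T$, "block disjointness" does not let you conclude $\int_\D|\Psi'|\,dA\gtrsim\sum_k|u_k|$: there is no Littlewood--Paley type lower bound in $L^1$ allowing you to minorize the $L^1$ norm of a sum of frequency-disjoint blocks by the sum of the $L^1$ norms of the blocks, so the step "this should force $\int_\D|\Psi'|\,dA=\infty$" is unjustified. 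The paper avoids all of this via an elementary coefficient estimate (cited from Vinogradov): from $|a_n|\,n\,r^{n-1}\le M_1(r,f')$ one gets, integrating over the pairwise disjoint intervals $[1-1/n_k,\,1-1/(2n_k)]$,
\begin{equation*}
\sum_k\bigl|\widehat{\Psi}(n_k)\bigr|\;\lesssim\;\int_0^1 M_1(r,\Psi')\,dr\;\asymp\;\Vert\Psi\Vert_{\mathcal{D}^1_0},
\end{equation*}
and since $\widehat{\Psi}(n_k)=u_k=\tfrac1{k+1}\notin\ell^1$ by Proposition~\ref{le:fournier}(ii), $\Psi\notin\mathcal{D}^1_0$ follows at once. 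None of the fine structure of the Fournier blocks (beyond the prescribed coefficients) is needed for this step. You should replace your final paragraph's mechanism by this coefficient argument.
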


\begin{proof}
Part (i) can be proved arguing as in \eqref{eq:dpparthinfty} and  \eqref{eq:dppart2hinfty}, so we omit a detailed proof.
\par (ii)\,  Assume first that $0<q<1$. Consider the lacunary power series $$g(z)=\sum _{k=1} ^ {\infty} \frac{1}{k^{1/q}} z^{2^k}.$$
\par
By Proposition \ref{Had}, $g\in H^\infty\setminus \Dq$. Since
$\limsup \limits _{k\rightarrow \infty}\frac{1}{k^{1/q}} \left(\log
2^k\right)^{1/q}< \infty$ (see \cite[p.\,\@20]{PelRathg})
$g\in\cb_{\log,\frac{1}{q}}$.
\par\medskip Let us consider now the case $
q=1$. The proof in this case is a little bit more involved.
Set
$$u_k=\frac{1}{k+1}\,\,\,\,\text{and}\,\,\,\,n_k=4^k,\quad  k=0, 1, 2,\dots .$$ Let $\Psi$ be
the $H^\infty $-function associated to these sequences via
Theorem~\ref{le:fournier}. By \cite[Lemma $1.6$ (i)]{Vi},
$$||\Psi||_{\mathcal{D}^1_0}\gtrsim \Vert \{\widehat{\Psi}(4^k\}_{k=0}^\infty \Vert_{\ell^1}=\sum_{k=0}^\infty\frac{1}{k+1}=\infty.$$
Finally, we shall see that $\Psi\in \mathcal{B}_{\log}$. Bearing in
mind, \cite[p.\,\@113]{Pabook}, \cite[Lemma\,\@3.\,\@1]{MatelPavstu}
and Lemma \ref{le:fournier} (iv), we deduce
\begin{equation*}\begin{split}
&M_\infty(r,\Psi')\le |\widehat{\Psi}(1)|+\sum_{k=0}^\infty
M_\infty(r,S_{n_{k}+1,n_{k+1}}\Psi')
\\ &\lesssim ||\Psi||_{H^\infty}+ \sum_{k=0}^\infty ||S_{n_{k}+1,n_{k+1}}\Psi'||_{H^\infty} r^{4^{k}}
\\ & \lesssim ||\Psi||_{H^\infty}+ \sum_{k=0}^\infty 4^{k}||S_{n_{k}+1,n_{k+1}}\Psi||_{H^\infty} r^{4^{k}}
\\ & \lesssim ||\Psi||_{H^\infty}+ \sum_{k=0}^\infty \frac{4^{k}}{k+1}r^{4^{k}}.
\end{split}\end{equation*}
Since an standard calculation shows that
$$ \sum_{k=0}^\infty \frac{4^{k}}{k+1}r^{4^{k}}\lesssim \frac{1}{(1-r)\log\frac{e}{1-r}},\quad 0\le r<1,$$
this finishes the proof.
   \end{proof}

\par\medskip
 Next we
provide a sufficient condition, which involves Carleson measures, on a function $g$ to lie in this
space of multipliers. It  turns out to be also necessary
if $g$ is given by a power series with Hadamard gaps.

\begin{theorem}\label{pq1}
Assume that $0<p\le q\le 1$ and let $g$ be an analytic function in
$\D $. Let $\mu_{g,q}$ be the Borel measure in $\D$ defined by
$d\mu_{g,q}(z) = |g'(z)|^q(1-|z|^2)^{q-1}\,dA(z)$.
\begin{itemize}\item[(a)] If $g\in H^\infty \cap \cb_{\log}$
and the measure $\mu_{g,q}$ is a Carleson measure, then $g \in
\M(\dpb,\dqb)$.
\item[(b)] If $g$ is given by a power series with
Hadamard gaps, then $g \in \M(\dpb,\dqb)$ if and only if $g\in
H^\infty\cap\cb_{\log}$ and the measure $\mu_{g,q}$ is a Carleson
measure.
\end{itemize}
\end{theorem}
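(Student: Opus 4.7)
\noindent\textbf{Proof plan for Theorem~\ref{pq1}.}

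\smallskip

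\emph{Part (a).} The plan is to fix $f\in\dpb$ and show $gf\in\dqb$ by separating the Bloch and Dirichlet contributions. Since (\ref{MB}) gives $\M(\cb)=H^\infty\cap\cb_{\log}$, the hypothesis on $g$ immediately yields $gf\in\cb$, so I will focus on proving $gf\in\Dq$. I will use the inequality $|(gf)'|^q\le |gf'|^q+|g'f|^q$ (valid because $q\le 1$) and estimate each term. For the first,
$$
\int_\D |gf'(z)|^q(1-|z|)^{q-1}\,dA(z)\le \|g\|_{H^\infty}^q\|f\|_\Dq^q\lesssim \|f\|_\dpb^q,
$$
where the last step is Lemma~\ref{inclusionDpB} (which bounds $\|f\|_\Dq$ by $\|f\|_\dpb$ since $p\le q$ and $f\in\cb$). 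For the second term I will use (\ref{2}) to obtain $f\in\Dq\subset H^q$ (recall $q\le 1<2$) with $\|f\|_{H^q}\lesssim \|f\|_\dpb$, and then invoke the classical Carleson embedding $H^q\hookrightarrow L^q(d\mu_{g,q})$, available because of the assumed Carleson condition on $\mu_{g,q}$, to conclude
$$
\int_\D |g'f(z)|^q(1-|z|)^{q-1}\,dA(z)=\int_\D |f|^q\,d\mu_{g,q}\lesssim \|f\|_{H^q}^q\lesssim \|f\|_\dpb^q.
$$
Combining the two estimates gives $gf\in\dqb$.

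\smallskip

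\emph{Part (b), sufficiency.} Immediate from (a).

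\smallskip

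\emph{Part (b), necessity.} Let $g(z)=\sum a_kz^{n_k}$ be a power series with Hadamard gaps that lies in $\M(\dpb,\dqb)$. The remark following Theorem~\ref{th:blochdp} provides $\M(\dpb,\dqb)\subset\M(\cb)=H^\infty\cap\cb_{\log}$, so $g\in H^\infty\cap\cb_{\log}$. The main remaining task, and the core of the proof, is to show that $\mu_{g,q}$ is a Carleson measure. Starting from the multiplier property and invoking the closed graph theorem for $M_g\colon\dpb\to\dqb$, then splitting $(gf)'$ as in~(a) and using $g\in H^\infty$ on the $gf'$ piece, I will obtain the weak testing inequality
$$
\int_\D |f|^q\,d\mu_{g,q}\le C\|f\|_\dpb^q\qquad(f\in\dpb).
$$

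\smallskip

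\emph{Main obstacle and strategy.} The Carleson property is equivalent to the stronger $H^q$-testing condition $\int_\D |f|^q\,d\mu_{g,q}\lesssim \|f\|_{H^q}^q$ for every $f\in H^q$, and upgrading from the $\dpb$-bound above to an $H^q$-bound is the principal technical difficulty. My plan is to exploit the Hadamard gap structure of $g$: the derivative $g'(z)=\sum a_kn_k z^{n_k-1}$ has lacunary spectrum, so Paley-type $L^q$ estimates for lacunary series together with Proposition~\ref{Had} imply that $\mu_{g,q}$ effectively registers only the components of a test function near the frequencies $\{n_k\}$; on such lacunary projections the norms $\|\cdot\|_\dpb$ and $\|\cdot\|_{H^q}$ are comparable by Proposition~\ref{Had}. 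A Rademacher randomization of the signs of $\widehat f(n)$ for a general $f\in H^q$, followed by Fubini and Khinchine's inequality (Proposition~\ref{Khinchine}), should transfer the available $\dpb$-testing bound into the desired $H^q$-testing bound, thereby establishing the Carleson property for $\mu_{g,q}$.
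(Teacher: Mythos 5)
Your part (a) is correct and essentially the paper's argument with a small variation: the paper gets $fg\in\Dp$ from the Carleson embedding for $\Dp$ (quoting Vinogradov) and then bumps up to $\Dq$ via Lemma~\ref{inclusionDpB}, whereas you land in $\Dq$ directly through the chain $f\in\Dq\subset H^q\hookrightarrow L^q(d\mu_{g,q})$; both routes are legitimate. The sufficiency half of (b) is indeed immediate from (a), and your derivation of $g\in H^\infty\cap\cb_{\log}$ in the necessity half is the same as the paper's.

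The gap is in your proof that $\mu_{g,q}$ is a Carleson measure. The testing inequality $\int_\D|f|^q\,d\mu_{g,q}\lesssim\|f\|_{\dpb}^q$ that you correctly derive is intrinsically too weak to yield the Carleson property, because every $f\in\dpb$ is a Bloch function and hence grows at most like $\log\frac{1}{1-|z|}$: testing on the natural normalized family (e.g.\ $f_a(z)=\bigl(\frac{1-|a|^2}{1-\bar az}\bigr)^{s}$, which is uniformly bounded in $\dpb$) only gives $\mu_{g,q}(S(I))=\ogr(1)$, not $\mu_{g,q}(S(I))\lesssim|I|$; the functions of size $(1-|a|)^{-1/q}$ on $S(I)$ that would force the Carleson bound simply do not live in $\dpb$. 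Your proposed rescue --- randomizing the signs of $\widehat f(n)$ for $f\in H^q$ and applying Khinchine --- does not repair this: for $q\neq 2$ a random sign change does not preserve membership in $H^q$ (nor in $\dpb$) with comparable norm, and Fubini--Khinchine converts $\int_0^1\int|f_t|^q\,d\mu\,dt$ into $\int M_2(|z|,f)^q\,d\mu(z)$, which is controlled by the $\ell^2$ norm of the coefficients rather than by $\|f\|_{H^q}$. The idea you are missing is that the lacunary structure must be exploited on $g$ itself, not on the test functions: the paper observes that $1\in\dpb$ forces $g\in\Dq$, i.e.\ $\sum|a_k|^q<\infty$ by Proposition~\ref{Had}, and then invokes Theorem~3.2 of \cite{GP:IE06}, which states that for a power series with Hadamard gaps $g\in\Dq$ already implies that $\mu_{g,q}$ is a Carleson measure. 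With that known result the necessity is a two-line argument and no testing or randomization is needed.
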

\par\medskip
\begin{proof}
Suppose that $g\in H^\infty \cap \cb_{\log}$ and the measure
$\mu_{g,q}$ is a Carleson measure. Take $f\in \cb\cap \Dp$.
\begin{itemize}\item Using (\ref{MB}), we see that $g\in \M(\cb )$ and, hence,
$fg\in \cb$. \item Using \cite[Theorem $2.1$]{Vi} we deduce that
$g\in \M(\Dp )$ and, then it follows that $fg\in \Dp$.
\end{itemize} Since $\Dp\cap\cb \subset \Dq\cap \cb$, we have that
$fg\in\cb\cap \Dq $. Thus, we have proved that $g\in \M(\dpb,\dqb)$.
This finishes the proof of part (a).
\par Suppose now that $g$ is given by a power series with Hadamard
gaps and $g\in \M(\dpb,\dqb)$. Then $g\in \Dq$. Now, using
Theorem\,\@3.\,\@2 of \cite{GP:IE06}, we see that this implies that
$\mu_{g,q}$ is a Carleson measure. $\qed$
\end{proof}

\section{Multipliers on $\dph$}\label{mult-dph}
 \begin{Pf}{\em{Theorem \ref{Hinfty:pleq1}. }} Suppose that $0<p\le
 q<\infty $.
\par
If $g \in \M (\dph,\dqh)$ then, since  $\dph$ contains the constant
functions, it follows trivially that  $g\in\dqh$. \par On the other
hand, if $g \in \dqh$ and $f \in \dph$, it is clear that $gf\in
H^\infty $.  We also have
\begin{equation*}
\begin{aligned}
&\int _ \D \bigl|(g'f+gf')(z)\bigr|^q(1-|z|^2)^{q-1} d\,A(z)\\ &
\lesssim \int _ \D |(g'f)(z)|^q(1-|z|^2)^{q-1} \,dA(z)\,+ \int _ \D
|g(z)f'(z)|^q(1-|z|^2)^{q-1} d\,A(z)
\\ & \lesssim \|f\|^q_{H^\infty} \|g\|^q_{\Dq}  +  \|g\|^q_{H^\infty}
\|f\|^{q-p}_\cb\|f\|^{q-p}_{\Dp}\,<\infty .
\end{aligned}
\end{equation*}
Thus, $gf\in \Dq$ and, hence, $gf\in \dqh$. Consequently, we have
proved that  $g \in \M (\dph,\dqh)$.\,\,
 \end{Pf}
 \par\medskip
 \begin{Pf}{\em{Theorem \ref{Hinfty:q<p1}. }}
\par Let $\tilde{p}=\min\{p,1\}$ and $p^\star =\min\{p,2\}$. We shall split the proof in two cases.
\\ {\bf{Case} $\mathbf{1}$: $\mathbf{0<q<1}$. } Take a sequence $\{u_k\}_{k=1}^\infty \in
\ell^{\tilde{p}}\setminus\ell^q$ and let $f$ be defined by
$$f(z)=\sum_{k=1}^\infty u_kz^{2^k},\quad z\in \D.$$ Then, using Proposition \ref{Had} and the fact that $\tilde{p}\le 1$,
we see that $f\in \left (\Dp \cap H^\infty \right )\setminus \Dq$.
\\ {\bf{Case} $\mathbf{2}$: $\mathbf{1\le q<2}$.}
Let us consider a sequence $\{u_k\}$ such that $\{u_k\}_{k=1}^\infty
\in \ell^{p^\star}\setminus\ell^q$ and let choose $n_k=4^k$. We
claim that the function $\Phi\in H^\infty$ associated to $\{u_k\}$
and $\{n_k\}$ via Lemma \ref{le:fournier} satisfies that $\Phi\in
\dph\setminus\dqh.$
\par Arguing as in the proof of \cite[Lemma $1.6$ (i) ]{Vi} and bearing in mind Lemma \ref{le:fournier} (ii),  we deduce
$$||\Phi||^q_{\Dq}\gtrsim \sum_{k=0}^\infty \left|\widehat{\Phi(n_k)}\right|^q=||\{u_k\}||^q_{\ell^q}=\infty,$$
that is,  $\Phi\notin\Dq$.
\par By (\ref{1}), if $p\ge 2$ we are done. On the other hand, if $0<p<2$
by \cite[Theorem $1.1$ (ii)]{GP2}, M. Riesz theorem and
Lemma~\ref{le:fournier}~(iv),
\begin{equation*}\begin{split}
||\Phi||^q_{\Dp} & \le \int_0^1(1-r)^{p-1}M_2^p(r,\Phi')\,dr
\\ & \lesssim \sum_{k=0}^\infty \left(\left\Vert S_{2^k,2^{k+1}}\Phi\right\Vert_{H^2}\right)^{p/2}
\\ & \lesssim \sum_{k=0}^\infty \left(\left\Vert S_{4^k+1,4^{k+1}+1}\Phi\right\Vert_{H^2}\right)^{p/2}\lesssim ||\{u_k\}||^p_{\ell^p}<\infty,
\end{split}\end{equation*}
which finishes the proof.
 \end{Pf}
\par\medskip
\begin{Pf}{\em{Theorem \ref{Hinfty:q<p2}\,\@(a). }}
Assume that $0<q<1$, $0<q<p$ and that
 $g\in \M (\dph,\dqh)$ and $g\not\equiv 0$. Take $$f(z)=\sum_{k=1}^\infty \frac{z^{2^k}}{k^{\frac1{q}}}.$$
Then we use the Rademacher
functions as in the proof of Case\,\@3 of
Theorem\,\@\ref{th:blochdp}\,\@(ii) to get a contradiction. Hence,
$g\equiv 0$.

 \end{Pf}\par\medskip
 \begin{Pf}{\em{Theorem \ref{Hinfty:q<p2}\,\@(b). }}
Assume that $1\le q<2\le p$. By \cite[Theorem $1$]{Gi92} there is a function $f\in H^\infty $ such
that
$$\int_{0}^1(1-r^2)^{q-1} |f'(re^{i\theta})|^q\,dr = \infty \quad \text{for every $\theta\in B$}, $$
where $B$ is a subset of $[0, 2\pi ]$ whose Lebesgue measure $\vert
B\vert $ is $2\pi $. \par Suppose that $g \in \M (\dph,\dqh)$ and
$g\not\equiv 0$. Notice that $g\in \dqh$. Since
\begin{equation}
\begin{split}
\int_\D(1-|z|^2)^{q-1}|g'(z)f(z)|^q\,dA(z)\leq
\|f\|_{H^\infty}^q\|g\|_{\dqh }^q <\infty,
\end{split}
\end{equation}
it follows that
\begin{equation}\label{con:1}
\begin{split}
\int_\D(1-|z|^2)^{q-1}|g(z)f'(z)|^q\,dA(z)<\infty
\end{split}
\end{equation}
\par
 Since $g\in H^\infty$ and $g\not\equiv 0$,  there is a set $A=A(g) \subset [0, 2\pi ]$
 with $|A|>0$ and such that $\lim_{r\rightarrow 1^-}g(re^{i\theta})\not = 0$ if $\theta\in A$.
 Then, for every $\theta \in A\cap B$ there is $r_0(\theta) \in (0,1)$ such that $K=\inf \limits _ {r_0<r<1}|g(re^{i\theta})|
 >0$. Then
 $$\int_{0}^1(1-r^2)^{q-1}|g(re^{i\theta})|^q|f'(re^{i\theta})|^q\,dr\ge K^q \int_{r_0}^1(1-r^2)^{q-1}|f'(re^{i\theta})|^q\,dr = \infty,$$
 since $| A\cap B|>0$, this is in contradiction with (\ref{con:1}). Thus $g$ must be identically $0$. This finishes the proof.
\end{Pf}
\par\bigskip
\section{Some basic results on the space $BMOA_{\log}$}\label{Facts-BMOAlog}
\par We shall start this section by proving some embedding relations between $BMOA_{\log}$, $\cb_{\log}$ and $BMOA$.
With this aim, we recall that $g\in BMOA_{\log}$ if and only if
$$\sup_{a\in\D}\frac{\log^2\frac{2}{1-|a|}}{1-|a|}\int_{S(a)}|g'(z)|^2(1-|z|^2)\,dA(z)<\infty,$$
where $S(a)$ is the Carleson box associated to the interval
$$I_a=\left\{e^{it}\in\T:\, \left|\arg(a {e^{-it}}) \right|<\frac{1-|a|}{2}\right\},a\in\D\setminus\{0\},\quad I_0=\T.$$
\begin{proposition}\label{BMOAlogsubssetBlog}
If\, $1>\beta>\frac{1}{2}$, then $BMOA_{\log}\subsetneq
\cb_{\log}\subsetneq \cb_{log,\beta} \subsetneq BMOA$.
\end{proposition}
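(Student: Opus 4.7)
The plan is to prove the chain of inclusions one at a time and then exhibit examples witnessing the strictness of each. The middle inclusion $\cb_{\log}\subset\cb_{\log,\beta}$ (for $\beta<1$) is immediate from \eqref{eq:logemb} with $\alpha=1$. The outer two inclusions, while classical in flavour, go in opposite directions and require different tools; the strictness $BMOA_{\log}\subsetneq\cb_{\log}$ will be the main obstacle.

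For $BMOA_{\log}\subset\cb_{\log}$, my strategy is subharmonicity of $|g'|^{2}$. Given $a\in\D$, the sub-mean-value inequality on the disc $D(a,(1-|a|)/4)$, together with the comparability $1-|z|^{2}\asymp 1-|a|$ on this disc, yields
\begin{equation*}
|g'(a)|^{2}\lesssim\frac{1}{(1-|a|)^{3}}\int_{D(a,(1-|a|)/4)}(1-|z|^{2})|g'(z)|^{2}\,dA(z).
\end{equation*}
Enclosing $D(a,(1-|a|)/4)$ in a Carleson box $S(I)$ with $|I|\asymp 1-|a|$ and invoking the $2$-logarithmic Carleson hypothesis bounds the right-hand side by a constant times $(1-|a|)\bigl(\log(2/(1-|a|))\bigr)^{-2}/(1-|a|)^{2}$, which simplifies to $(1-|a|)|g'(a)|\log(2/(1-|a|))\lesssim 1$, i.e.~$g\in\cb_{\log}$.

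For $\cb_{\log,\beta}\subset BMOA$ with $\beta>1/2$, the pointwise estimate $|g'(z)|^{2}\lesssim(1-|z|^{2})^{-2}\bigl(\log(e/(1-|z|^{2}))\bigr)^{-2\beta}$ reduces the Carleson integral to
\begin{equation*}
\int_{S(I)}(1-|z|^{2})|g'(z)|^{2}\,dA(z)\lesssim\int_{S(I)}\frac{dA(z)}{(1-|z|)\bigl(\log(e/(1-|z|))\bigr)^{2\beta}}.
\end{equation*}
The angular integration contributes a factor comparable to $|I|$, and the radial integral, after the substitution $u=\log(e/(1-r))$, becomes $\int_{\log(e/|I|)}^{\infty}u^{-2\beta}\,du$, which converges precisely because $2\beta>1$ and yields a bound of order $\bigl(\log(e/|I|)\bigr)^{1-2\beta}\to 0$. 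This gives the Carleson condition and hence $g\in BMOA$. The threshold $\beta>1/2$ is clearly sharp at this level.

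It remains to verify that each inclusion is strict. The classical function $\log(1/(1-z))$ lies in $BMOA$ and satisfies $(1-r)|g'(r)|\equiv 1$ on the real axis, so it escapes every $\cb_{\log,\beta}$; taking instead an antiderivative of $\bigl((1-z)\log^{\beta}(e/(1-z))\bigr)^{-1}$ furnishes a function in $\cb_{\log,\beta}\setminus\cb_{\log}$ when $\beta<1$, again checked on the real axis. The delicate case is $BMOA_{\log}\subsetneq\cb_{\log}$, for which I would use the lacunary series $f(z)=\sum_{k\ge 1}k^{-1}z^{2^{k}}$. The analogue of Proposition\,\ref{Had}\,(iii) for $\cb_{\log,\alpha}$, namely $f\in\cb_{\log,\alpha}\Leftrightarrow\sup_{k}|a_{k}|(\log n_{k})^{\alpha}<\infty$ (obtained by testing at $r=1-1/n_{K}$), gives $|a_{k}|\log n_{k}\asymp 1$ and hence $f\in\cb_{\log}$. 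The main obstacle is the opposite direction: a Littlewood--Paley-type lower bound for lacunary series provides
\begin{equation*}
\frac{1}{|I|}\int_{S(I)}(1-|z|^{2})|f'(z)|^{2}\,dA(z)\gtrsim\sum_{n_{k}\ge 1/|I|}|a_{k}|^{2},
\end{equation*}
which for $|I|=2^{-m}$ is comparable to $1/m$; multiplied by the weight $\log^{2}(2/|I|)\asymp m^{2}$ demanded by $BMOA_{\log}$ it produces $m$, unbounded, so $f\notin BMOA_{\log}$.
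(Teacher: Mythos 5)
Your proposal is correct in substance, but it diverges from the paper precisely at the one delicate point, the strictness of $BMOA_{\log}\subsetneq\cb_{\log}$, so the two routes are worth contrasting. The inclusions themselves match: the paper proves $BMOA_{\log}\subset\cb_{\log}$ by the same subharmonicity argument (your displayed bound has a harmless typo --- the prefactor should be $(1-|a|)^{-3}$ times the Carleson bound, giving $|g'(a)|^{2}\lesssim(1-|a|)^{-2}\log^{-2}(2/(1-|a|))$, which is what your stated conclusion uses), it gets the middle inclusion from \eqref{eq:logemb} exactly as you do, and it simply cites the Carleson-measure characterization of $BMOA$ for $\cb_{\log,\beta}\subset BMOA$ where you carry out the radial integral; your strictness examples for the last two inclusions ($\log\frac{1}{1-z}$, and either your antiderivative of $\bigl((1-z)\log^{\beta}\frac{e}{1-z}\bigr)^{-1}$ or the paper's lacunary $\sum k^{-\beta}z^{2^{k}}$) are interchangeable. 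For $BMOA_{\log}\subsetneq\cb_{\log}$, however, the paper argues by contradiction: it invokes a result of Gr\"ohn--Pel\'aez--R\"atty\"a producing $g_{1},g_{2}\in\cb_{\log}$ with $|g_{1}'(z)|+|g_{2}'(z)|\gtrsim\bigl((1-|z|)\log\frac{2}{1-|z|}\bigr)^{-1}$, so that if the two spaces coincided one could integrate this lower bound over $S(a)$ and contradict the $2$-logarithmic Carleson decay. Your route instead exhibits the explicit function $\sum k^{-1}z^{2^{k}}$, which is arguably more concrete and self-contained in spirit, but it leans on the lower estimate $\frac{1}{|I|}\int_{S(I)}(1-|z|^{2})|f'(z)|^{2}\,dA(z)\gtrsim\sum_{n_{k}\ge 1/|I|}|a_{k}|^{2}$ for lacunary series. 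This is true, but it is not a formal consequence of the global Littlewood--Paley identity: the angular integral of $|f'(re^{it})|^{2}$ over the short arc $I$ must be shown comparable to $|I|\,M_{2}^{2}(r,f')$, which is Zygmund's theorem on lacunary trigonometric series restricted to sets of positive measure (\cite[Chapter~V]{Zyg}; the tail threshold there depends on $|I|$, which is exactly why only the tail $n_{k}\ge 1/|I|$ survives). You should cite or prove that ingredient explicitly; with it supplied, your argument is complete, and it buys an explicit witness where the paper only gets a non-constructive contradiction.
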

\begin{pf}
First, we  prove that $BMOA_{\log}\subset \cb_{\log}$. Take $f\in
BMOA_{\log}$. Let $a\in\D$ and assume without loss of generality
that $|a|>\frac12$. Set $a^\star=\frac{3|a|-1}{2}e^{i\arg a}$ so
that the disc $D\left(a,\frac{1-|a|}{2}\right)$ of center\, $a$\,
and radius $\frac{1-|a|}{2}$ is contained in the Carleson box
$S(a^\star)$. This inclusion together with the subharmonicity of
$|f'|^2$ and the fact that
 $(1-\vert z\vert )\asymp (1-\vert a\vert )$ for
$z\in D\left(a,\frac{1-|a|}{2}\right)$ gives
\begin{eqnarray*}&
\left (\log\frac{2}{1-\vert a\vert }\right )^2(1-\vert a\vert
)^2\vert f^\prime (a)\vert ^2 \lesssim \left (\log\frac{2}{1-\vert
a\vert }\right )^2\int_{D\left(a,\frac{1-|a|}{2}\right)}\vert f^\prime (z)\vert^2\,dA(z)\\
&\asymp \frac{\left (\log\frac{2}{1-\vert a\vert }\right )^2 }{1-\vert a\vert }
\int_{D\left(a,\frac{1-|a|}{2}\right)}(1-\vert z\vert ^2)\vert f^\prime (z)\vert
^2\,dA(z)\\
&\asymp \frac{\left (\log\frac{2}{1-\vert a^\star\vert }\right )^2 }{1-\vert a^\star\vert }
\int_{D\left(a,\frac{1-|a|}{2}\right)}(1-\vert z\vert ^2)\vert f^\prime (z)\vert
^2\,dA(z)\\
&\lesssim \frac{\left (\log\frac{2}{1-\vert a^\star\vert }\right )^2}{1-\vert a^\star\vert }
\int_{S(a^\star) }(1-\vert z\vert ^2)\vert f^\prime (z)\vert
^2\,dA(z),
\end{eqnarray*}
 so $f\in \cb_{\log }$.
\par Now, let us see that the inclusion is strict. We borrow ideas from \cite[Proposition $5.1$ (D)]{PelRat}.
Assume on the contrary to the assertion that $BMOA_{\log}=\cb_{\log}$.
 By  \cite[Theorem $1$]{GroPelRat}
(see also \cite{AD2012}) there are $g_1, g_2\in \cb_{\log}$ such that
$$|g_1'(z)|+|g'_2(z)|\gtrsim \frac{1}{(1-|z|)\log\frac{2}{1-\vert z\vert}},\quad z\in \D.$$
Then, for any $a\in\D$
\begin{equation*}\begin{split}
&\int_{S(a) }\frac{1}{(1-|z|)\log^2\frac{2}{1-\vert z\vert}}\,dA(z)
\lesssim \int_{S(a) }\left(|g_1'(z)|+|g'_2(z)|\right)^2(1-\vert z\vert ^2)\,dA(z)
\\ &\lesssim \int_{S(a)}|g_1'(z)|^2(1-\vert z\vert ^2)\,dA(z)
+\int_{S(a)}|g_2'(z)|^2(1-\vert z\vert ^2)\,dA(z)
\\ & \lesssim \frac{(1-|a|)}{\log^2\frac{2}{1-\vert a\vert}},
\end{split}\end{equation*}
so bearing in mind that
$$\int_{S(a) }\frac{1}{(1-|z|)\log^2\frac{2}{1-\vert z\vert}}\,dA(z)\asymp \frac{(1-|a|)}{\log\frac{2}{1-\vert a\vert}},$$
and letting $|a|\to 1^-$, we obtain a contradiction.
\par Assume now that $\beta\in \left(\frac{1}{2},1\right)$. Then it is clear that $\cb_{\log}\subsetneq \cb_{log,\beta}$. Furthermore,
$f(z)=\sum_{k=1}^\infty \frac{z^{2^k}}{k^\beta}\in \cb_{log,\beta}\setminus \cb_{log}$ (see \cite[p. $20$]{PelRathg})
\par
The inclusion $\cb_{\log,\beta}\subsetneq BMOA$, for
$\beta>\frac{1}{2}$, follows easily using the characterization of
$BMOA$ in terms of Carleson measures (see \cite[p.\,\@669]{GaGiHe}).
Finally, we observe that $f(z)=\log\frac{1}{1-z}\in BMOA\setminus
\cb_{\log,\beta}$ for any $\beta>0$. This concludes the proof.
\end{pf}
\par\medskip
Next we find a simple sufficient condition for the membership a a
function $f\in\hol (\D )$ in the space $BMOA_{\log }$.
\begin{proposition}\label{suf-con-BMOAlog}
Let $f$ be an analytic function in $\D $. If
\begin{equation}\label{suf-cond}\int_0^1(1-r)\left (\log\frac{1}{1-r}\right )^2\left[M_\infty
(r,f^\prime )\right]^2\,dr<\infty \end{equation} then $f\in
BMOA_{\log }$.
\end{proposition}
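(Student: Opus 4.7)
The plan is to verify the Carleson-measure characterization of $BMOA_{\log }$ stated in Section~\ref{intro}: I would show that for every interval $I\subset\partial\D$,
\begin{equation*}
\int_{S(I)}(1-|z|^2)|f'(z)|^2\,dA(z)\lesssim \frac{|I|}{\left(\log\frac{2}{|I|}\right)^{2}}.
\end{equation*}
A preliminary observation is that (\ref{suf-cond}) forces $\int_0^1(1-r)M_\infty(r,f')^2\,dr<\infty$, which by the estimate $M_2(r,f')\le M_\infty(r,f')$ puts $f\in \mathcal D^2_1=H^2\subset H^1$ (so membership in $BMOA_{\log }$ is meaningful for $f$) and also yields finiteness of the global integral $\int_\D(1-|z|^2)|f'(z)|^2\,dA(z)$.

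The main step is to dominate the integral over the Carleson box by a one-variable integral. Writing $dA(z)=\frac{1}{\pi}r\,dr\,d\theta$ and using the trivial arc estimate $\int_I|f'(re^{it})|^2\,dt\le |I|\,M_\infty(r,f')^2$ gives
\begin{equation*}
\int_{S(I)}(1-|z|^2)|f'(z)|^2\,dA(z)\le \frac{|I|}{\pi}\int_{1-|I|/(2\pi)}^{1}(1-r^2)\left[M_\infty(r,f')\right]^2\,dr,
\end{equation*}
so the game reduces to controlling the right-hand radial integral. On the range $r\ge 1-|I|/(2\pi)$ one has $\log\frac{1}{1-r}\ge \log\frac{2\pi}{|I|}$, so I can insert a factor $\left(\log\frac{1}{1-r}\right)^2$ into the integrand at the price of $\left(\log\frac{2\pi}{|I|}\right)^{-2}$ outside. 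The remaining integral is then majorised by the hypothesis (\ref{suf-cond}), and $\log\frac{2\pi}{|I|}\asymp \log\frac{2}{|I|}$ for $|I|$ bounded away from $2\pi$ delivers exactly the Carleson condition defining $BMOA_{\log }$.

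For intervals with $|I|$ bounded below, the target quantity $|I|\left(\log\frac{2}{|I|}\right)^{-2}$ is itself bounded below, so the required bound is implied by the already-noted global finiteness of $\int_\D(1-|z|^2)|f'(z)|^2\,dA(z)$. I do not expect any real obstacle here; the whole argument is essentially one application of the logarithmic monotonicity on the radial slab of the Carleson box, together with the elementary passage from area to arc via $M_\infty$.
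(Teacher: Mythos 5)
Your proof is correct and follows essentially the same route as the paper's: bound the angular integral over the Carleson box by $|I|\,M_\infty(r,f')^2$, then use that $\log\frac{2}{|I|}\lesssim\log\frac{2}{1-r}$ on the radial slab $1-|I|/(2\pi)\le r<1$ to absorb the logarithmic weight and majorise by the hypothesis integral. The extra preliminary remarks (that the hypothesis already gives $f\in H^2\subset H^1$, and the separate treatment of long intervals) are harmless additions that the paper leaves implicit.
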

\begin{pf}
Suppose that $f$ satisfies (\ref{suf-cond}) Let $I$ be an interval
in $\T$ of length $h$, say $I=\{ e^{it} : \theta_0<t<\theta_0+h\} $.
Then
\begin{eqnarray*}&\frac{\left (\log\frac{2}{\vert I\vert }\right)^2}{\vert I\vert
} \int _{S(I)}(1-\vert z\vert ^2)\vert f^\prime (z)\vert ^2 \,dA(z)
\asymp \frac{\left
(\log\frac{2}{h}\right)^2}{h}\int_{1-h}^1\int_{\theta_0}^{\theta_0+h}(1-r)\vert
f^\prime (re^{it})\vert ^2\,dr \\& \lesssim
\left(\log\frac{2}{h}\right)^2 \int_{1-h}^1(1-r)\left[M_\infty
(r,f^\prime )\right]^2\,dr\le\int_{1-h}^1(1-r)\left[M_\infty
(r,f^\prime )\right]^2\left (\log\frac{2}{1-r}\right )^2\,dr\\& \le
\int_{0}^1(1-r)\left[M_\infty (r,f^\prime )\right]^2\left
(\log\frac{2}{1-r}\right )^2\,dr.
\end{eqnarray*}
\end{pf}
\par\medskip
Now we turn to the question of finding conditions on the Taylor
coefficients of a function $f\in \hol (\D )$ enough to assert that
$f\in BMOA_{\log }$. We shall need two lemmas. The first one
estimates an integral which may be viewed as a generalization of the
classical beta function (compare with Lemma\,\@2 of \cite{D:Random})
and we omit its proof.
\begin{lemma}\label{like-Beta} Whenever $m=1, 2, 3, \dots $ and $\alpha
>0$, we have
\begin{equation}\label{int-with-gamma=2} \int_0^1x^n(1-x)^m\left
(\log \frac{1}{1-x}\right )^\alpha \,dx\,\asymp\,\frac{(\log
n)^\alpha }{n^{m+1}},\quad\text{as $n\to\infty $}.
\end{equation}
\end{lemma}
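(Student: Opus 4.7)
The plan is to make the substitution $u=1-x$, which reduces the claim to
\[
J_n \,:=\, \int_0^1 (1-u)^n u^m \left(\log\frac{1}{u}\right)^\alpha du \,\asymp\, \frac{(\log n)^\alpha}{n^{m+1}},
\]
and then to establish matching upper and lower bounds by splitting the integral at the natural scale $u=1/n$. The motivation is standard: the factor $(1-u)^n$ concentrates its mass near $u=0$ on this scale, while on that scale $\bigl(\log(1/u)\bigr)^\alpha$ is of order $(\log n)^\alpha$; hence, modulo the logarithm, the integral behaves like a rescaled beta function.

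For the upper bound I would split $J_n=J_n^{(1)}+J_n^{(2)}$ corresponding to $u\in[0,1/n]$ and $u\in[1/n,1]$. On the outer piece $[1/n,1]$ we have $\log(1/u)\le\log n$, so what remains is a classical beta integral and
\[
J_n^{(2)} \le (\log n)^\alpha \int_0^1 (1-u)^n u^m\,du \,=\, (\log n)^\alpha\, \frac{n!\,m!}{(n+m+1)!} \,\asymp\, \frac{(\log n)^\alpha}{n^{m+1}}.
\]
On the inner piece $[0,1/n]$ I would bound $(1-u)^n\le 1$ and make the change of variables $s=\log(1/u)$, giving
\[
J_n^{(1)} \le \int_{\log n}^\infty s^\alpha e^{-(m+1)s}\,ds.
\]
A single integration by parts shows that the right-hand side is asymptotically $(m+1)^{-1}(\log n)^\alpha\, n^{-(m+1)}$ as $n\to\infty$, which is exactly the bound required.

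For the matching lower bound I would simply restrict the integral to the subinterval $[1/(2n),1/n]$; on it all three factors have a definite size, namely $u\asymp 1/n$, $(1-u)^n\ge(1-1/n)^n\gtrsim 1$ and $\log(1/u)\asymp\log n$, while the length of the interval is $\asymp 1/n$. Multiplying these estimates together gives $J_n\gtrsim (\log n)^\alpha/n^{m+1}$. No serious obstacle arises: the only step requiring a bit of care is the routine tail asymptotic $\int_T^\infty s^\alpha e^{-(m+1)s}\,ds \sim (m+1)^{-1}T^\alpha e^{-(m+1)T}$ as $T\to\infty$, which is either a one-line integration by parts or a particular case of the standard estimate $\Gamma(\alpha+1,T)\sim T^\alpha e^{-T}$.
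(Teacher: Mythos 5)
Your proof is correct and complete: the substitution $u=1-x$, the split at the scale $u=1/n$, the beta-function evaluation on the outer piece, the incomplete-gamma tail estimate $\int_T^\infty s^\alpha e^{-(m+1)s}\,ds\sim (m+1)^{-1}T^\alpha e^{-(m+1)T}$ on the inner piece, and the lower bound obtained by restricting to $[1/(2n),1/n]$ all check out. Note that the paper itself omits the proof of this lemma entirely, only remarking that it is a generalization of the classical beta integral in the spirit of Lemma~2 of Duren's paper on random series, so there is no argument in the text to compare against; your splitting argument is the standard and expected one for estimates of this type.
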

\par\medskip
\begin{lemma}\label{lema-coef} Suppose that $\alpha >0$ and
let $g$ be an analytic function in $\D $, $g(z)=\sum_{n=0}^\infty
a_nz^n$ ($z\in \D$). The following two conditions are equivalent:
\par (i)\,\, $\int_{\D}(1-\vert z\vert ^2)\vert g^\prime (z)\vert
^2\left (\log\frac{2}{1-\vert z\vert }\right )^\alpha\,dA(z)<\infty
.$
\par (ii)\,\, $\sum_{n=1}^\infty \vert a_n\vert ^2[\log n]^\alpha <\infty
$.\end{lemma}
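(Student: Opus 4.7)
The plan is to reduce the area integral in (i) to a weighted sum of the squared Taylor coefficients by means of polar coordinates and Parseval's identity, and then apply the preceding Lemma \ref{like-Beta} term by term.

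More concretely, I would first write
\begin{equation*}
\int_{\D}(1-|z|^2)|g'(z)|^2\Bigl(\log\tfrac{2}{1-|z|}\Bigr)^{\alpha}\,dA(z)
= 2\int_0^1 (1-r^2)\Bigl(\log\tfrac{2}{1-r}\Bigr)^{\alpha} r\,M_2^2(r,g')\,dr,
\end{equation*}
and use Parseval's identity $M_2^2(r,g') = \sum_{n=1}^{\infty} n^2|a_n|^2 r^{2n-2}$. Interchanging sum and integral (both sides are series of non-negative terms, so Tonelli applies regardless of convergence) and substituting $x=r^2$ converts the condition (i) into
\begin{equation*}
\sum_{n=1}^{\infty} n^2 |a_n|^2 \int_0^1 x^{n-1}(1-x)\Bigl(\log\tfrac{2}{1-\sqrt{x}}\Bigr)^{\alpha} dx < \infty.
\end{equation*}

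Next I would observe that for $x\in[0,1)$ one has $\log\tfrac{2}{1-\sqrt{x}}\asymp \log\tfrac{2}{1-x}$ (both quantities are comparable to $1$ near $x=0$ and differ by a bounded multiplicative constant near $x=1$, since $1-\sqrt{x}\asymp 1-x$ there). Hence the integral above is comparable to $\int_0^1 x^{n-1}(1-x)(\log\tfrac{1}{1-x})^{\alpha} dx$ for $n$ large, up to a correction controlled by a constant. Applying the preceding lemma with $m=1$ yields
\begin{equation*}
\int_0^1 x^{n-1}(1-x)\Bigl(\log\tfrac{1}{1-x}\Bigr)^{\alpha} dx \asymp \frac{(\log n)^{\alpha}}{n^{2}},\quad n\to\infty.
\end{equation*}
Substituting this asymptotic back, the series in question is comparable to $\sum_{n\ge 2} |a_n|^2 (\log n)^{\alpha}$, establishing the equivalence (i)$\Leftrightarrow$(ii).

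The only mildly delicate points are the justification of the coefficient-comparability of the logarithmic weight (handled by the elementary estimate $1-\sqrt{x}\asymp 1-x$) and absorbing the first few terms (for which $\log n$ is not bounded below by a positive constant), which only contribute a finite constant to either side and do not affect finiteness. Neither of these presents a real obstacle; the substance of the proof is entirely contained in Parseval's identity together with Lemma \ref{like-Beta}.
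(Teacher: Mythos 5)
Your proof is correct and follows essentially the same route as the paper's: polar coordinates plus Parseval to reduce (i) to $\sum n^2|a_n|^2\int_0^1(1-r)r^{2n-1}(\log\frac{2}{1-r})^\alpha\,dr$, then Lemma~\ref{like-Beta} with $m=1$ to evaluate each integral as $\asymp (\log n)^\alpha/n^2$. Your extra care with the substitution $x=r^2$ and the comparison $\log\frac{2}{1-\sqrt{x}}\asymp\log\frac{2}{1-x}$ only makes explicit a step the paper glosses over.
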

\begin{pf} We have
\begin{eqnarray*}&\int_{\D}(1-\vert z\vert ^2)\vert g^\prime (z)\vert
^2\left (\log\frac{2}{1-\vert z\vert }\right )^\alpha \,dA(z)\asymp
\int_0^1r(1-r)M_2(r,g^\prime )^2\left (\log\frac{2}{1-\vert z\vert
}\right )^ \alpha\,dr\\ & =\sum_{n=1}^\infty n^2\vert a_n\vert
^2\int _0^1(1-r)r^{2n-1}\left (\log\frac{2}{1-\vert z\vert }\right
)^\alpha\,dr
\end{eqnarray*}
Now, using Lemma\,\@\ref{like-Beta} with $m=1$ we see that $\int
_0^1(1-r)r^{2n-1}\left (\log\frac{2}{1-\vert z\vert }\right
)^\alpha\,dr\asymp \frac{[\log n]^\alpha}{n^2}$. Then it follows
that $\int_{\D}(1-\vert z\vert ^2)\vert g^\prime (z)\vert ^2\left
(\log\frac{2}{1-\vert z\vert }\right )^2\,dA(z)\asymp
\sum_{n=1}^\infty \vert a_n\vert ^2[\log (n+1)]^\alpha $. \end{pf}
\par\medskip
We close this section proving Proposition\,\@\ref{lac-bmoalog-log3}.
\par \begin{Pf}{\,\em{Proposition\,\@\ref{lac-bmoalog-log3}.}}\,
Suppose that $\sum_{k=0}^\infty \vert a_k\vert ^2(\log n_k)^3<\infty
$ and
$$\hbox{$f(z)=\sum_{k=0} \sp\infty a_kz\sp {n_k}$\, ($z\in \D $)\,
with $n_{k+1}\ge \lambda n_k$\, for all $k$, and $\lambda
>1$.}$$
Using the Cauchy–-Schwarz inequality and the fact that $\sum_{k=0}
\sp\infty r^{2n_k}\lesssim \log\frac{2}{1-r}$ (because the function
$h$ given by  $h(z)=\sum z^{2n_k}$ is a Bloch function), we see that
\begin{eqnarray*}&
[rM_\infty (r, f^\prime )]^2\le \left (\sum_{k=0} \sp\infty n_k\vert
a_k\vert r^{n_k}\right )^2\\&\le \left (\sum_{k=0} \sp\infty
n_k^2\vert a_k\vert ^2r^{2n_k}\right )\left (\sum_{k=0} \sp\infty
r^{2n_k}\right )\lesssim \left (\log\frac{2}{1-r}\right )\sum_{k=0}
\sp\infty n_k^2\vert a_k\vert ^2r^{2n_k}.\end{eqnarray*} Then, using
Lemma\,\@\ref{like-Beta} with $m=1$ and $\alpha =3$, we obtain
\begin{eqnarray*}&
\int_0^1(1-r)\left (\log\frac{1}{1-r}\right )^2\left [M_\infty
(r,f^\prime )\right ]^2\,dr\lesssim \int_0^1(1-r)\left
(\log\frac{1}{1-r}\right )^3\left (\sum_{k=0} \sp\infty n_k^2\vert
a_k\vert ^2r^{2n_k}\right )\,dr\\& =\sum_{k=0} \sp\infty n_k^2\vert
a_k\vert ^2\int_0^1r^{2n_k}(1-r)\left (\log\frac{1}{1-r}\right
)^3\,dr \lesssim \sum_{k=0} \sp\infty \vert a_k\vert ^2\left (\log
n_k\right )^3<\infty .\end{eqnarray*} Then
Proposition\,\@\ref{suf-con-BMOAlog} implies that $f\in BMOA_{\log
}$.
\par To see that $f\in H^\infty $ observe that $\lambda^k\lesssim
n_k$ and $\vert a_k\vert ^2\lesssim (\log n_k)^{-3}$. Then it
follows that $\vert a_k\vert =\og \left (k^{-3/2}\right )$, as
$k\to\infty $ and the result follows.
\end{Pf}
\par\bigskip
\section{Random power series}\label{Random-ps}\par\medskip
In this section we shall consider random power series analytic in
$\D $ of the form $$\sum_{n=0}^\infty \epsilon_na_nz^n$$ where the
$\epsilon_n$'s are random signs. More precisely, if $f\in\hol(\D )$,
$f(z)=\sum_{n=0}^\infty a_nz^n$ ($z\in \D $), we set
$$f_t(z)=\sum_{n=0}^\infty r_n(t)a_nz^n,\quad 0\le t<1,\quad z\in
\D ,$$ where the ${r_n}$'s are the Rademacher functions. Each
function $f_t$ is analytic in $\D $. Littlewood \cite{Li} (see also
\cite[Appendix\,\@A]{D}) proved that if $\sum_{n=0}^\infty \vert
a_n\vert^2<\infty $ then $f_t\in \cap_{0<p<\infty }H^p$ almost
surely (a.\,\@s.), that is, for almost every $t$. On the other hand,
the condition $\sum_{n=0}^\infty \vert a_n\vert^2=\infty $ implies
that for almost every $t$, $f_t$ has a radial limit almost nowhere.
\par Paley and Zygmund \cite{PaZy1930} gave an example
of an $f$ with
\begin{equation}\label{an2logn}\sum_{n=1}^\infty\vert a_n\vert^2\log n<\infty
\end{equation}
such that $f_t\notin H^\infty $ for every $t$.
\par Anderson, Clunie and Pommerenke \cite{ACP} used a result of Salem and Zygmund \cite{SaZy}
on the behaviour of the maxima of the partial sums of random
trigonometric series to prove that (\ref{an2logn}) implies that
$f_t\in \cb$ a.\,\@s. and that this condition is best possible.
Later on, Sledd \cite{Sledd} used also the Salem and Zygmund theorem
to show that (\ref{an2logn}) actually implies that $f_t\in BMOA$
a.\,\@s.
\par
Duren proved in \cite{D:Random} the following result.
\begin{other}\label{duren-ran} If\, $0\le \beta \le 1$\, and\, $\sum_{n=1}^\infty \vert a_n\vert ^2(\log n)^\beta <\infty $, then
for almost every\, $t\in [0, 1]$, the function
$$f_t(z)=\sum_{n=1}^\infty r_n(t)a_nz^n,\quad z\in \D,$$ satisfies
\begin{equation}\label{Du-ft-int-m-infty}
\int_0^1(1-r)\left (\log \frac{1}{1-r}\right )^{\beta -1}\left
[M_\infty (r, f_t^\prime )\right ]^2\,dr <\infty .\end{equation}
\end{other}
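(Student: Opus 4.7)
The plan is to apply Fubini's theorem to reduce the problem to an integrated bound on $M_\infty(r,f_t')$ in $t$, and then to invoke a Salem--Zygmund type estimate for random trigonometric series. By Fubini, the conclusion (\ref{Du-ft-int-m-infty}) for almost every $t\in[0,1]$ will follow once we establish
\begin{equation*}
J\ig \int_0^1(1-r)\left(\log\frac{1}{1-r}\right)^{\beta-1}\left(\int_0^1[M_\infty(r,f_t')]^2\,dt\right)dr<\infty,
\end{equation*}
since finiteness of $J$ forces the inner $t$-integrand to be finite for almost every $t$.

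The heart of the matter is the inequality
\begin{equation*}
\int_0^1[M_\infty(r,f_t')]^2\,dt\,\lesssim\,\left(\log\frac{2}{1-r}\right)[M_2(r,f')]^2,\qquad 0<r<1.
\end{equation*}
To obtain it, one views $\theta\mapsto f_t'(re^{i\theta})=\sum_{n=1}^\infty na_n r^{n-1}e^{i(n-1)\theta}r_n(t)$ as a random trigonometric series whose effective degree is $N\asymp 1/(1-r)$, the tail beyond $n\sim 1/(1-r)$ being exponentially damped by $r^n$. A dyadic block decomposition together with the classical Salem--Zygmund theorem on the supremum of a random trigonometric polynomial, applied block by block, and the $L^p$-comparability of random sums furnished by Khintchine's inequality (Proposition~\ref{Khinchine}), yields the required $L^2(dt)$ bound with the gain of only a single power of $\log\frac{2}{1-r}$.

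Plugging this bound into $J$ and expanding $[M_2(r,f')]^2=\sum_{n=1}^\infty n^2|a_n|^2 r^{2n-2}$ via Parseval gives
\begin{equation*}
J\,\lesssim\,\sum_{n=1}^\infty n^2|a_n|^2\int_0^1(1-r)r^{2n-2}\left(\log\frac{1}{1-r}\right)^\beta dr.
\end{equation*}
Lemma~\ref{like-Beta} (applied with $m=1$ and $\alpha=\beta$) says the inner integral is comparable to $(\log n)^\beta/n^2$, and consequently $J\lesssim\sum_{n=1}^\infty|a_n|^2(\log n)^\beta<\infty$ by hypothesis; this is precisely (\ref{Du-ft-int-m-infty}) for a.e.~$t$.

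The main technical obstacle is the Salem--Zygmund estimate in the integrated form above: the classical theorem is pointwise-almost-sure for polynomials, so two difficulties must be handled simultaneously---passing from an almost-sure supremum bound to an $L^2(dt)$ bound (via Kahane's inequality or directly from the sub-Gaussian behaviour implicit in Salem--Zygmund), and passing from a trigonometric polynomial to the infinite analytic series via dyadic blocks whose $\sqrt{\log 2^k}$ contributions must be absorbed by the geometric decay provided by $r^{2^k}$ once $k$ exceeds $\log_2\frac{1}{1-r}$. Once this step is secured, the remaining argument is Fubini, Parseval, and the integral asymptotic already recorded in Lemma~\ref{like-Beta}.
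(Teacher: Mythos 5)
The statement you are proving is quoted by the paper from Duren's article and is not proved there; the closest thing to ``the paper's own proof'' is the argument for its $\beta=3$ analogue (Theorem~\ref{random.likeduren-bmoalog}), which follows Duren's method: one first extracts from the Salem--Zygmund theorem a \emph{deterministic, almost-sure pointwise majorant} $M_\infty(r,f_t')\le C\psi(r)$ with $\psi(r)=(1-r)\sum_n B_n\sqrt{\log n}\,r^n$ and $B_n^2=\sum_{k\le n}k^2|a_k|^2$, and then runs a purely deterministic computation on $\psi$ using the generalized beta integral (Lemma~\ref{like-Beta}) and Hilbert's inequality. Your route is genuinely different: you keep the randomness until the end, bound the \emph{second moment} $\int_0^1[M_\infty(r,f_t')]^2\,dt\lesssim\bigl(\log\frac{2}{1-r}\bigr)[M_2(r,f')]^2$, and finish by Fubini and Parseval. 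What your version buys is a cleaner endgame (a single sum over $n$ instead of the double sum and Hilbert's inequality, and no partial-sum quantities $B_n$); what it costs is that all the difficulty is concentrated in the integrated Salem--Zygmund estimate, which you correctly identify as the crux but do not actually prove.

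There is one concrete point in your sketch of that crux that, as written, would not close. If you decompose the series into dyadic blocks $2^k\le n<2^{k+1}$ for \emph{all} $k\ge0$ and apply the degree-$2^{k+1}$ polynomial estimate block by block, the low-frequency part ($2^k\lesssim\frac{1}{1-r}$) contributes $\sum_{k\le K}\sqrt{k}\,\sigma_k$ with $K\asymp\log\frac{1}{1-r}$, and by Cauchy--Schwarz this is only $O\bigl(K(\sum_k\sigma_k^2)^{1/2}\bigr)$; the loss is genuine (take $|a_n|\asymp1/n$, for which the $\sigma_k$ are essentially equal). That yields $\int_0^1[M_\infty(r,f_t')]^2dt\lesssim\bigl(\log\frac{2}{1-r}\bigr)^2[M_2(r,f')]^2$, and then $J\lesssim\sum|a_n|^2(\log n)^{\beta+1}$, which is strictly more than the hypothesis gives you. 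The fix is standard but must be said: apply the Salem--Zygmund/Dudley bound to the \emph{entire} initial segment $n\le N\sim\frac{1}{1-r}$ as a single polynomial of degree $N$ (one factor $\sqrt{\log N}$ total), and reserve the dyadic blocking, with its exponential damping $r^{2^kN}\le e^{-c2^k}$ and radii pushed to $\rho=\frac{1+r}{2}$, for the tail $n>N$ only. With that correction, together with Kahane's inequality to pass from $\mathbb{E}\sup$ to the $L^2(dt)$ norm of the sup (Khinchine alone does not apply to a supremum of random sums), your argument is sound; note also that Lemma~\ref{like-Beta} is stated for $\alpha>0$, so the case $\beta=0$ of your last step needs the elementary beta-function estimate instead.
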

\par\medskip Using this, Duren gave in \cite{D:Random} a new proof
of Sledd's theorem. Next we prove an analogue of Duren's theorem for
$\beta =3$. This will allow us to obtain the analogue of Sledd's
theorem for $BMOA_{\log }$.
\begin{theorem}\label{random.likeduren-bmoalog}
If\, $\sum_{n=1}^\infty \vert a_n\vert ^2(\log n)^3<\infty $ then
for almost every $t\in [0, 1]$, the function
$$f_t(z)=\sum_{n=1}^\infty r_n(t)a_nz^n,\quad z\in \D,$$ satisfies
\begin{equation}\label{ft-int-m-infty}
\int_0^1(1-r)\left (\log \frac{1}{1-r}\right )^2\left [M_\infty (r,
f_t^\prime )\right ]^2\,dr <\infty .\end{equation}
\end{theorem}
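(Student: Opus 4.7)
My plan is to mimic Duren's proof of Theorem~\ref{duren-ran}, but with the logarithmic exponent shifted to $\beta=3$. By Fubini's theorem the desired almost-sure conclusion will follow from the single estimate
$$I\,:=\,\int_0^1(1-r)\left(\log\frac{1}{1-r}\right)^2\int_0^1\left[M_\infty(r,f_t')\right]^2 dt\,dr\,<\,\infty,$$
since finiteness of this double integral forces the inner $r$-integral to be finite for almost every $t$. Thus the whole theorem reduces to controlling the inner $t$-average of $[M_\infty(r,f_t')]^2$ at each fixed $r$.

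To handle that inner average, I would appeal to the Salem--Zygmund maximal inequality for random trigonometric series. Writing
$$f_t'(re^{i\theta})\,=\,\sum_{n=1}^{\infty}r_n(t)\bigl(na_n r^{n-1}\bigr)e^{i(n-1)\theta}$$
and decomposing this series dyadically into blocks $\{2^k\le n<2^{k+1}\}$, the Salem--Zygmund estimate applied to each block (whose trigonometric degree is $\lesssim 2^{k+1}$), combined with the subgaussian tail of Rademacher sums, yields
$$\int_0^1\left[M_\infty(r,f_t')\right]^2 dt\,\lesssim\,\log\frac{1}{1-r}\cdot M_2(r,f')^2,$$
the logarithmic factor reflecting the fact that only blocks with $2^k\lesssim 1/(1-r)$ contribute non-trivially at radius $r$, while $r^{2n-2}$ suppresses the rest.

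Substituting this bound into the expression for $I$ and applying Parseval's identity to write $M_2(r,f')^2=\sum_{n=1}^{\infty}n^2|a_n|^2 r^{2n-2}$, I obtain after interchanging sum and integral
$$I\,\lesssim\,\sum_{n=1}^{\infty}n^2|a_n|^2\int_0^1 r^{2n-2}(1-r)\left(\log\frac{1}{1-r}\right)^3 dr.$$
Lemma~\ref{like-Beta}, with $m=1$ and $\alpha=3$, evaluates each of these integrals as $\asymp (\log n)^3/n^2$. Hence
$$I\,\lesssim\,\sum_{n=1}^{\infty}|a_n|^2(\log n)^3\,<\,\infty,$$
by the hypothesis of the theorem.

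The main obstacle will be the careful bookkeeping in the Salem--Zygmund step: one must obtain exactly one extra logarithm (not more) when bounding $\int_0^1 [M_\infty(r,f_t')]^2 dt$ in terms of $M_2(r,f')^2$, so that the exponents add up to give precisely $(\log\frac{1}{1-r})^3$ inside the $r$-integral. This is what makes $\sum|a_n|^2(\log n)^3<\infty$ the natural hypothesis for this level of smoothness, in parallel with Duren's result where the $\beta$-power of the logarithm is produced by the same mechanism with one fewer logarithm appearing from the outset.
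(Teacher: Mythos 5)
Your Fubini reduction is fine, and a mean--square version of the Salem--Zygmund argument can be made to work, but the inequality on which your whole proof hinges,
$$\int_0^1\bigl[M_\infty(r,f_t')\bigr]^2\,dt\,\lesssim\,\Bigl(\log\frac{1}{1-r}\Bigr)\,M_2(r,f')^2,$$
is false as a pointwise-in-$r$ estimate, and the dyadic mechanism you sketch cannot deliver it. If the numbers $na_nr^{n-1}$ are all equal on a single block $N\le n<2N$ and vanish elsewhere, the Salem--Zygmund bound is sharp: $\int_0^1\sup_\theta|f_t'(re^{i\theta})|^2\,dt\gtrsim(\log N)\,M_2(r,f')^2$, and $\log N$ has nothing to do with $\log\frac{1}{1-r}$ (take $r=1/2$ and $N$ large). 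The heuristic that $r^{2n-2}$ \lq\lq suppresses\rq\rq\ the high blocks fails because the same factor sits inside $M_2(r,f')^2$ on the right-hand side, so it cancels from the ratio. If instead you sum the blockwise estimates honestly, $\bigl(\int_0^1M_\infty^2\,dt\bigr)^{1/2}\le\sum_k\bigl(\int_0^1\sup_\theta|\Delta_kf_t'|^2\,dt\bigr)^{1/2}\lesssim\sum_k\sqrt{k}\,b_k$ with $b_k^2=\sum_{2^k\le n<2^{k+1}}n^2|a_n|^2r^{2n-2}$, then every way of squaring this sum (Cauchy--Schwarz, or truncating at $k\asymp\log\frac{1}{1-r}$) costs at least one extra logarithm, and your final bound degrades to $\sum|a_n|^2(\log n)^{4}$ or worse. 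This is exactly the bookkeeping obstacle you flag in your last paragraph, and it is not a technicality: it is where the proof lives.

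The paper avoids it with Duren's device. Put $B_n^2=\sum_{k=1}^nk^2|a_k|^2$. The Salem--Zygmund maximal theorem applied to the \emph{partial sums} of the differentiated series (Lemma~1 of \cite{D:Random}) gives, almost surely, $\max_\theta|S_n^t(\theta)|\le C_t\,B_n\sqrt{\log n}$ for all $n$, and Abel summation converts this into the pointwise bound $M_\infty(r,f_t')\le C_t\,\psi(r)$ with $\psi(r)=(1-r)\sum_nB_n\sqrt{\log n}\,r^n$; this is how one keeps exactly one half-power of a logarithm per frequency. The theorem then reduces to the purely deterministic estimate $\int_0^1(1-r)\bigl(\log\frac{1}{1-r}\bigr)^2\psi(r)^2\,dr\lesssim\sum_k|a_k|^2(\log k)^3$, and that step is not a one-line application of Lemma~\ref{like-Beta} with $m=1$: one expands $\psi(r)^2$ into a double sum, applies Lemma~\ref{like-Beta} with $m=3$ and $\alpha=2$ to $\int_0^1r^{n+j}(1-r)^3(\log\frac{1}{1-r})^2\,dr$, uses the monotonicity of $\log x/x^{3/2}$ to separate the factor $[\log(n+j)]^2/(n+j)^3$, and finishes with Hilbert's inequality $\sum_{n,j}\frac{x_nx_j}{n+j}\lesssim\sum_nx_n^2$ followed by the interchange $\sum_nB_n^2(\log n)^3n^{-3}\asymp\sum_k|a_k|^2(\log k)^3$. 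Hilbert's inequality is indispensable here and is entirely absent from your outline.
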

\par\medskip
Another result of \cite{SaZy} implies that the condition
$\sum_{n=1}^\infty \vert a_n\vert^2[\log n]^\beta <\infty $ for some
$\beta >1$, implies for almost every $t$, $f_t$ has a continuous
extension to the closed unit disc. Using this,
Proposition\,\@\ref{suf-con-BMOAlog}, and
Theorem\,\@\ref{random.likeduren-bmoalog} we obtain the first part
of Theorem~\ref{random.likesledd-bmoalog}. Part~(ii) of this theorem
can be proved arguing as in section\,\@3.\,\@4 of \cite{ACP}, and we
omit the proof.
\par\medskip
The proof of Theorem\,\@\ref{random.likeduren-bmoalog} follows the
lines of that of Theorem\,\@\ref{duren-ran} in \cite{D:Random}. We
shall use the result of Salem and Zygmund already mentioned
(Lemma\,\@1 of \cite{D:Random}), Hilbert's inequality (Lemma\,\@2 of
\cite{D:Random}) and Lemma\,\@\ref{like-Beta} with $m=3$ and $\alpha
=2$.
\par\medskip
\begin{Pf}{\,\em{Theorem \ref{random.likeduren-bmoalog}.}}\,
Set $$B_n^2=\sum_{k=1}^\infty k^2\vert a_k\vert ^2,\quad  n=1, 2,
\dots ,$$ and $\psi (r)=(1-r)\sum_{n=1}^\infty B_n\sqrt {\log n}r^n$
($0<r<1$). Just as in p.\,\@84 of \cite{D:Random}, we have
\begin{equation}\label{ineq-f-prime-duren}
\vert {f_t}^\prime (z)\vert \le C\psi (r), \quad \vert z\vert
=r,\quad 0<r<1,\quad\text{almost surely}.\end{equation} Using
Lemma\,\@\ref{like-Beta}, the simple fact that $\frac{\log
x}{x^{3/2}}$ decreases as $x$ increases in $[e^{2/3},\infty)$, and
Hilbert's inequality, we deduce
\begin{align*}&
\int_0^1(1-r)\left (\log \frac{1}{1-r}\right )^2[\psi
(r)]^2\,dr\notag
\\& \asymp \int_0^1(1-r)^3\left (\log\frac {1}{1-r}\right )^2\left
[\sum_{n=1}^\infty B_n\sqrt {\log n}r^n\right ]^2\,dr\notag\\&
=\sum_{n=1}^\infty \sum_{j=1}^\infty B_n\sqrt {\log n}B_j\sqrt {\log
j}\int_0^1r^{n+j}(1-r)^3\left (\log \frac{1}{1-r}\right
)^2\,dr\notag\\& \lesssim \sum_{n=1}^\infty \sum_{j=1}^\infty
\frac{B_n\sqrt {\log n}B_j\sqrt {\log j}}{(n+j)^4}[\log
(n+j)]^2\tag{*}
\\& \le \sum_{n=1}^\infty \sum_{j=1}^\infty \frac{1}{n+j}\frac{B_n[\log
n]^{3/2}}{n^{3/2}}\frac{B_j[\log j]^{3/2}}{j^{3/2}}\notag
\\& \lesssim
\sum_{n=1}^\infty \vert B_n\vert ^2\frac{[\log
n]^{3}}{n^3}.\notag
\end{align*}
Now,
\begin{align*}&\sum_{n=1}^\infty \vert B_n\vert ^2\frac{[\log
n]^{3}}{n^3}=\sum_{n=1}^\infty \sum_{k=1}^nk^2\vert a_k\vert
^2\frac{[\log n]^{3}}{n^3}\tag{**}\\ &=\sum_{k=1}^\infty
k^2\vert a_k\vert ^2\sum_{n=k}^\infty \frac{[\log
n]^{3}}{n^3}\asymp\sum_{k=1}^\infty \vert a_k\vert^2[\log
k]^{3}<\infty.\notag
\end{align*}
Then (\ref{ineq-f-prime-duren}), (*) and (**) imply that
(\ref{ft-int-m-infty}) holds for almost every $t$, finishing the
proof.
\end{Pf}

\par\bigskip
\section{Multipliers on $\Dpbmoa$}\label{multbmoa}
In this section we shall prove our results concerning multipliers
from $\Dpbmoa $ to $\Dqbmoa $. Let start with the following result.
\begin{theorem}\label{inclusion-always}
For any $p, q$ with $0<p, q<\infty $ we have
$$\M(\Dpbmoa ,\Dqbmoa )\subset BMOA_{\log}\cap H^\infty =\M(BMOA).$$
\end{theorem}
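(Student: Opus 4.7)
The plan is to adapt the test-function strategy used in the proof of Theorem~\ref{th:blochdp}(i) to the $BMOA$ setting. Fix $g\in\M(\Dpbmoa,\Dqbmoa)$; by the closed graph theorem, $M_g$ is a bounded operator, so any family in $\Dpbmoa$ with uniformly bounded norms is sent to a family with uniformly bounded $\Dqbmoa$-norms. The argument splits in two steps: first $g\in H^\infty$, then $g\in BMOA_{\log}$.

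For the first step, use the family $\{\varphi_a\}_{a\in\D}$ of M\"obius automorphisms. Since $\|\varphi_a\|_{H^\infty}=1$, the norms $\|\varphi_a\|_{BMOA}$ are uniformly bounded. A direct application of the classical integral estimate
$$\int_\D \frac{(1-|z|^2)^{c}}{|1-\bar az|^{d}}\,dA(z),\qquad c>-1,$$
shows, after checking the three regimes $p>1$, $p=1$, $p<1$, that $\sup_{a\in\D}\|\varphi_a\|_{\Dp}<\infty$. Hence $\sup_{a}\|\varphi_a\|_{\Dpbmoa}<\infty$, and consequently $\sup_{a}\|g\varphi_a\|_{\cb}\lesssim \sup_{a}\|g\varphi_a\|_{\Dqbmoa}<\infty$, using $\Dqbmoa\subset BMOA\subset\cb$. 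Writing
$$(1-|z|^{2})|\varphi_a'(z)g(z)|\le (1-|z|^{2})|(g\varphi_{a})'(z)|+|\varphi_{a}(z)|\,(1-|z|^{2})|g'(z)|,$$
using $|\varphi_a|\le 1$ and $g\in\cb$, and then setting $z=a$ (where $(1-|a|^{2})|\varphi_{a}'(a)|=1$), we deduce $g\in H^\infty$, exactly as in the proof of Theorem~\ref{th:blochdp}(i).

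For the second step, consider the test functions $f_\theta(z)=\log\frac{1}{1-ze^{-i\theta}}$. Rotation invariance gives $\sup_\theta\|f_\theta\|_{BMOA}<\infty$, and the same integral estimate gives $\sup_\theta\|f_\theta\|_{\Dp}<\infty$ (here $c=p-1$, $d=p$, so $d<c+2$ and the integral is bounded). Hence $\sup_\theta\|gf_\theta\|_{BMOA}\lesssim \sup_\theta\|gf_\theta\|_{\Dqbmoa}<\infty$, which says that the measures $d\mu_{gf_\theta}(z)=(1-|z|^{2})|(gf_\theta)'(z)|^{2}\,dA(z)$ are Carleson with constant independent of $\theta$. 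Expanding $(gf_\theta)'=g'f_\theta+gf_\theta'$, applying the triangle inequality in $L^{2}((1-|z|^{2})\,dA)$, and using the first step ($g\in H^\infty$) together with the fact that $\mu_{f_\theta}$ is itself a uniform Carleson measure (because $f_\theta\in BMOA$ uniformly in $\theta$), we obtain
$$\int_{S(I)}(1-|z|^{2})|g'(z)f_\theta(z)|^{2}\,dA(z)\lesssim |I|,\qquad\text{uniformly in $\theta$ and $I$.}$$

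Finally, for a short interval $I$ centered at $e^{i\theta_{0}}$, choose $\theta=\theta_{0}$ and verify the pointwise lower bound $|f_{\theta_{0}}(z)|^{2}\gtrsim\bigl(\log(2/|I|)\bigr)^{2}$ on $S(I)$: the estimate $|1-ze^{-i\theta_{0}}|\lesssim|I|$ for $z\in S(I)$ yields $|f_{\theta_{0}}(z)|\ge\log\frac{1}{|1-ze^{-i\theta_{0}}|}\gtrsim\log\frac{2}{|I|}$. Combining this with the previous display gives
$$\frac{\bigl(\log(2/|I|)\bigr)^{2}}{|I|}\int_{S(I)}(1-|z|^{2})|g'(z)|^{2}\,dA(z)\lesssim 1,$$
which, taking the supremum over $I$ (the long-$I$ range being trivial since $g\in BMOA$), shows $g\in BMOA_{\log}$. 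The main obstacle is the bookkeeping in the test-function arguments, namely verifying uniform membership of $\{\varphi_{a}\}$ and $\{f_\theta\}$ in $\Dpbmoa$ and securing the pointwise lower bound for $|f_\theta|$ on the Carleson box $S(I)$ whose apex lies on the ray $\{re^{i\theta}:0<r<1\}$; both reduce to standard estimates.
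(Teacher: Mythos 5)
Your proposal is correct and follows essentially the same route as the paper: M\"obius transformations to get $g\in H^\infty$, then logarithmic test functions combined with the product rule, the $H^\infty$ bound, and the pointwise lower bound of the logarithm on a Carleson box to extract the $BMOA_{\log}$ condition. The only cosmetic difference is that the paper parametrizes the test functions by the interior point $a=(1-\tfrac{|I|}{2\pi})\xi$ (i.e.\ $f_a(z)=\log\tfrac{1}{1-\bar a z}$) rather than by the boundary point $e^{i\theta_0}$, which changes nothing in the estimates.
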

\begin{pf}
The proof uses arguments similar to those in that of
Theorem\,\@\ref{th:blochdp}\,\@(i) and, hence, we shall omit some
details.
\par Using that the family $\{ \varphi _a : a\in \D \} $ is bounded
in $\mathcal D^\lambda _{\lambda -1}\cap BMOA$ for all $\lambda >0$,
we deduce that
$$\M(\Dpbmoa ,\Dqbmoa )\subset H^\infty ,\quad 0<p, q<\infty .$$
\par Suppose now that $0<p, q<\infty $ and $g\in \M(\Dpbmoa ,\Dqbmoa
)$. Let us use the test functions $f_a$ ($a\in \D $) defined by
$$f_a(z)=\log \frac{1}{1-\bar az},\quad z\in \D .$$
It is easy to see that the family $\{ f_a : a\in \D \} $ is also
bounded $\mathcal D^\lambda _{\lambda -1}\cap BMOA$ for all $\lambda
>0$.  On the other hand, there exists an
absolute constant $C>0$ such that for any arc $I\subset \partial \D$
$$\frac{1}{C}\log\frac{2}{\vert I\vert }\le \vert f_a(z)\vert \le C\log\frac{2}{\vert I\vert
},\quad z\in S(I),$$ where  $a=(1-\frac{\vert I\vert } {2\pi })\xi $
with $\xi $ the center of $I$. \par
 Then we have
\begin{eqnarray*}&
\frac{\log^2\frac{2}{\vert I\vert }}{\vert I\vert
}\int_{S(I)}(1-\vert z\vert ^2 )\vert g^\prime (z)\vert
^2\,dA(z)\,\le \frac{C^2}{\vert I\vert }\int_{S(I)}(1-\vert z\vert
^2 )\vert f_a(z)\vert ^2\vert g^\prime (z)\vert ^2\,dA(z)\\ \lesssim
& \frac{C^2}{\vert I\vert }\int_{S(I)}(1-\vert z\vert ^2 )\vert
(f_ag)^\prime (z)\vert ^2\,dA(z)\,+\, \frac{C^2}{\vert I\vert
}\int_{S(I)}(1-\vert z\vert ^2 )\vert f_a^\prime (z)\vert ^2\vert
g(z)\vert ^2\,dA(z).
\end{eqnarray*}
Since $g\in \M(\Dpbmoa ,\Dqbmoa )$, the family $\{ f_ag : a\in \D\}
$ is bounded in $BMOA$ and hence \,$\sup_I\frac{C^2}{\vert I\vert
}\int_{S(I)}(1-\vert z\vert ^2 )\vert (f_ag)^\prime (z)\vert
^2\,dA(z)<\infty $.\, Also, using that $g\in H^\infty $ and that the
family $\{ f_a :a\in \D\} $ is bounded in $BMOA$, we deduce
that\newline $\sup_I\frac{C^2}{\vert I\vert }\int_{S(I)}(1-\vert
z\vert ^2 )\vert f_a^\prime (z)\vert ^2\vert g(z)\vert
^2\,dA(z)<\infty $.\, Consequently, we have that
$$\sup_I\frac{\log^2\frac{2}{\vert I\vert }}{\vert I\vert
}\int_{S(I)}(1-\vert z\vert ^2 )\vert g^\prime (z)\vert
^2\,dA(z)<\infty , $$ that is, $g\in BMOA_{\log}$.
\end{pf}

\par\medskip
\begin{Pf}{\,\em{Theorem \ref{BMOA1<qpleq}.}}\, Suppose that $1<q<\infty $ and $0<p\le q<\infty $.
In view of Theorem\,\@\ref{inclusion-always}, we only have to prove
that $\M(BMOA)\subset \M(\Dpbmoa ,\Dqbmoa )$. \par Take $g\in
\M(BMOA)$ and $f\in BMOA \cap \Dp$. Then, clearly, $fg\in BMOA$.
Using Lemma~\ref{inclusionDpB} and the closed graph theorem, we
obtain
\begin{equation}
\begin{aligned}\label{eq:dppart}
&\int _\D |(fg)'(z)|^{q}(1-|z|^2)^{q-1} dA(z)
\\ & \lesssim \int _\D |f'(z)g(z)|^{q}(1-|z|^2)^{q-1}dA(z)
+ \int _\D |g'(z)f(z)|^{q}(1-|z|^2)^{q-1}dA(z)
\\ & \lesssim \|g\|_{H^\infty}^q\|f\|_{\Dpbmoa}^q
+ \int _\D |f(z)g'(z)|^{q}(1-|z|^2)^{q-1}dA(z).
\end{aligned}
\end{equation}
Now, Proposition\,\@\ref{BMOAlogsubssetBlog} implies that $g\in
\mathcal B_{\log }$. Also, since $BMOA\subset H^q$, we have that
$f\in H^q$. Then we see that the last integral in (\ref{eq:dppart})
is finite as in the proof of (\ref{eq:dppart2hinfty}). Thus, we have
proved that $g\in \M(\Dpbmoa ,\Dqbmoa )$ finishing the proof.
\end{Pf}
\par\medskip
\begin{Pf}{\,\em{Theorem \ref{BMOAq<p}.}}\,
Suppose that $0<q<p<\infty $, $q<2$ and $g\in \M(\Dpbmoa ,\Dqbmoa
)$ with $g\not\equiv 0$. Take $a_n=\frac{1}{n^\lambda }$ ($n=1, 2, \dots
$) with
$$\max \left (\frac{1}{2}, \frac{1}{p}\right )<\lambda \le
\frac{1}{q}$$ and set $f(z)=\sum_{n=1}^\infty a_nz^{2^n}$ ($z\in \D$). We
have that $f\in \Dpbmoa \setminus \Dq$. Then we use the Rademacher
functions as in the proof of Case\,\@3 of
Theorem\,\@\ref{th:blochdp}\,\@(ii) to get a contradiction. Hence,
$g\equiv 0$.
\end{Pf}
\par\medskip
Now we turn to prove Theorem~\ref{randon-bmoalogMdp}. Let us notice
that (i) follows from Theorem\,\@\ref{random.likeduren-bmoalog} and
(ii) from Theorem\,\@\ref{random.likesledd-bmoalog}\,\@(i). To prove
(iii) we shall use the following lemma.
\begin{lemma}\label{hinfty+bmoalog+som}
Suppose that $0<q<2$ and $\alpha >0$. Let $f$ be an analytic
function in $\D $ of the form $f(z)=\sum_{n=0}^\infty a_nz^n$ ($z\in
\D$), with $\sum_{n=0}^\infty \vert a_n\vert ^2[\log n]^\alpha
<\infty $. If $f\in BMOA_{\log }\cap H^\infty $ then
$$f\in \M(\Dpbmoa ,\Dqbmoa ),\quad\text{whenever\, $0<p\le q$\,
and\, $\frac{q\alpha }{2-q}>1$.}$$
\end{lemma}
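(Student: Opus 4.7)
The plan is to verify that for any $g \in \Dpbmoa$, the product $fg$ lies in $\Dqbmoa$. This requires checking two things: $fg \in BMOA$ and $fg \in \Dq$. The first is immediate: since $f \in BMOA_{\log}\cap H^\infty = \M(BMOA)$ by identity (\ref{MBMOA}), and $g \in BMOA$, we get $fg \in BMOA$. So the heart of the argument is showing $fg \in \Dq$.

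Writing $(fg)' = f'g + fg'$, the estimate on $\int_\D |(fg)'|^q(1-|z|)^{q-1}\,dA$ splits into two pieces. The piece involving $fg'$ is easy: bounding $|f(z)| \le \|f\|_{H^\infty}$ leaves $\|f\|_{H^\infty}^q \|g\|_{\Dq}^q$, and $g \in \Dq$ by Lemma~\ref{inclusionDpB} applied to $g \in \Dp \cap BMOA \subset \Dp \cap \cb$ with $p \le q$. The real work is bounding
\[
J = \int_\D |f'(z)|^q |g(z)|^q (1-|z|^2)^{q-1}\,dA(z).
\]

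For this I apply H\"older's inequality with conjugate exponents $2/q$ and $2/(2-q)$, using a log weight tuned so that the $f'$ factor matches exactly the integral appearing in Lemma~\ref{lema-coef}. Specifically, rewrite the integrand as
\[
\left[\,|f'|^2(1-|z|^2)\left(\log\tfrac{2}{1-|z|}\right)^{\alpha}\right]^{q/2}
\cdot
\left[\,|g|^{2q/(2-q)}(1-|z|^2)^{-1}\left(\log\tfrac{2}{1-|z|}\right)^{-\alpha q/(2-q)}\right]^{(2-q)/2},
\]
using that $(q/2-1)\cdot 2/(2-q) = -1$ to kill the mixed $(1-|z|)$ exponent. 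H\"older then gives
\[
J \lesssim \left(\int_\D |f'|^2(1-|z|^2)\left(\log\tfrac{2}{1-|z|}\right)^{\alpha}\,dA\right)^{q/2}\left(\int_\D \frac{|g|^{2q/(2-q)}}{(1-|z|^2)\left(\log\tfrac{2}{1-|z|}\right)^{\alpha q/(2-q)}}\,dA\right)^{(2-q)/2}.
\]
The first factor is finite by Lemma~\ref{lema-coef} applied to the hypothesis $\sum|a_n|^2(\log n)^{\alpha}<\infty$.

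For the second factor, pass to polar coordinates and use the BMOA regularity: since $BMOA \subset \cap_{0<t<\infty}H^t$, we have $M_{2q/(2-q)}(r,g) \le \|g\|_{H^{2q/(2-q)}}<\infty$ uniformly in $r$. What remains is
\[
\int_0^1 \frac{dr}{(1-r)\left(\log\frac{2}{1-r}\right)^{\alpha q/(2-q)}},
\]
which converges precisely because $\alpha q/(2-q)>1$, the standing hypothesis. The main obstacle is setting the H\"older split correctly: the weights on $(1-|z|)$ and on $\log\frac{2}{1-|z|}$ must be balanced so that the $f'$-factor matches Lemma~\ref{lema-coef} while leaving the $g$-factor with exactly the logarithmic integrability threshold $\alpha q/(2-q)>1$; once these exponents are chosen the rest is routine.
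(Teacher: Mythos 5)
Your proposal is correct and follows essentially the same route as the paper: the product rule splits the $\Dq$-seminorm into an easy $fg'$ term (controlled by $\|f\|_{H^\infty}$ and $g\in\Dq$) and the main term $J$, which is handled by exactly the same H\"older split with exponents $2/q$ and $2/(2-q)$ and the logarithmic weight $(\log\frac{2}{1-|z|})^{\alpha}$, the first factor being finite by Lemma~\ref{lema-coef} and the second by $BMOA\subset\cap_{t<\infty}H^t$ together with the convergence of $\int_0^1\frac{dr}{(1-r)(\log\frac{2}{1-r})^{\alpha q/(2-q)}}$ under the hypothesis $\frac{\alpha q}{2-q}>1$. The only difference is cosmetic (your outer exponent $(2-q)/2$ on the second factor corrects a typographical $(2-q)/q$ in the paper's displayed inequality).
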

\par\medskip For $0<q\le 1$, (iii) of
Theorem~\ref{randon-bmoalogMdp} follows using (ii) and the lemma
with $\alpha =3$, while, for $1<q<\infty $, it follows from Theorem
\ref{BMOA1<qpleq}.
\par\medskip \begin{Pf}{\,\em{Lemma\,\@\ref{hinfty+bmoalog+som}.}}\,
Suppose that $f$ is in the conditions of the lemma and that $0<p\le
q$\, and\, $\frac{q\alpha }{2-q}>1$.
\par Take $h\in \Dpbmoa $.
 Since $BMOA_{\log }\cap H^\infty =\M (BMOA)$, it follows that
$fh\in BMOA$. \par We have also
\begin{eqnarray*}
&\int_{\D}(1-\vert z\vert )^{q-1}\vert (fh)^\prime (z)\vert ^q\,dA(z)\\&
\lesssim \int_{\D}(1-\vert z\vert )^{q-1}\vert f(z)\vert ^q\vert
h^\prime (z)\vert ^q\,dA(z) + \int_{\D}(1-\vert z\vert )^{q-1}\vert
f^\prime(z)\vert ^q\vert h(z)\vert ^q\,dA(z)\\&=I_1+I_2.
\end{eqnarray*}
The first summand $I_1$ is finite because $f\in H^\infty $ and $h\in
\Dpbmoa\subset \Dqbmoa $. Let us estimate the second one $I_2.$
Using H\"{o}lder's inequality with the exponents $\frac{2}{q}$ and
$\frac{2}{2-q}$, we obtain
\begin{eqnarray*}
&I_2=\int_{\D}(1-\vert z\vert )^{q-1}\vert f^\prime(z)\vert ^q\vert
h(z)\vert ^q\,dA(z)\\&= \int_{\D}\vert f^\prime(z)\vert ^q(1-\vert
z\vert )^{q/2}\left (\log \frac{e}{1-\vert z\vert }\right )^{\frac{q\alpha}{2}
}\left (\log \frac{e}{1-\vert z\vert }\right )^{-\frac{q\alpha}{2}}\vert
h(z)\vert ^q1-\vert z\vert )^{\frac{q}{2}-1}\,dA(z)\\& \le \left
[\int_{\D}\vert f^\prime(z)\vert ^2(1-\vert z\vert )\left (\log
\frac{e}{1-\vert z\vert }\right )^{\alpha }\,dA(z) \right ]^{q/2}
\left [\int_{\D}\frac{\vert h(z)\vert ^{\frac{2q}{2-q}}}{\left (\log
\frac{e}{1-\vert z\vert }\right )^{\frac{q\alpha }{2-q}}(1-\vert
z\vert )}\,dA(z) \right ]^{(2-q)/q}.
\end{eqnarray*}
Using Lemma\,\@\ref{lema-coef}, it follows that the first integral
in the last product is finite. Now, notice that $f\in H^\lambda $
for all $\lambda <\infty $ to deduce
\begin{eqnarray*}
\int_{\D}\frac{\vert h(z)\vert ^{\frac{2q}{2-q}}}{\left (\log
\frac{e}{1-\vert z\vert }\right )^{\frac{q\alpha }{2-q}}(1-\vert
z\vert )}\,dA(z)\le \Vert f\Vert _{H^\frac{2q}{2-q}}^{\frac{2q}{2-q}}
\int_0^1\frac{1}{\left (\log \frac{e}{1-r}\right )^{\frac{q\alpha
}{2-q}}(1-r)}\,dr
\end{eqnarray*}
and this integral is finite because $\frac{q\alpha }{2-q}>1$. Thus
$I_2<\infty $. Then we have that $fh\in \Dq $ and, hence, $fh\in
\Dqbmoa $. Consequently, we have proved that $f\in \M(\Dpbmoa
,\Dqbmoa )$.
\end{Pf}
\par\medskip To finish the proof of Theorem~\ref{randon-bmoalogMdp}
take $q\in (0, 1/2)$ and let $\{ a_n\} $ be defined as follows:
$$a_{2^k}=(k+1)^{-1/q},\quad k=0, 1,\dots $$
and $a_n=0$, if $n$ is not a power of $2$. Set
$$f(z)=\sum_{n=0}^\infty a_nz^n=\sum_{k=0}^\infty
(k+1)^{-1/q}z^{2^k},\quad z\in \D .$$ It is clear that $\{ a_n\} $
satisfies (\ref{rand-log3}). Furthermore, for almost every $t$,
$f_t$ is given by a lacunary power series, $f_t(z)=\sum_{k=0}^\infty
r_{2^k}(t)a_{2^k}z^{2^k}$, which does not belong to $\Dq$ because
$\sum_{k=0}^\infty \vert a_{2^k}\vert^q=\infty $.

\par\bigskip
Now  turn to consider multipliers in $\M (\Dpbmoa ,\Dqbmoa )$ given
by power series with Hadamard gaps.  First we show that whenever
$0<p\le q\le 1$ the power series with Hadamard gaps in $\M (\Dpbmoa
,\Dqbmoa )$ coincide with those in $\Dq \cap BMOA_{\log }$.
\begin{theorem}\label{lac-qle10<pleq}
Suppose that $0<p\le q\le 1$ and let $g$ be an analytic function in
$\D $ given by a power series with Hadamard gaps. Then the following
conditions are equivalent:
\begin{itemize}\item[(a)] $g\in \M (\Dpbmoa ,\Dqbmoa )$.
\item[(b)] $g\in \Dq \cap BMOA_{\log }$.
\end{itemize}
\end{theorem}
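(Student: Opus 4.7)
The plan is to prove the two directions separately. Direction (a)$\Rightarrow$(b) is immediate: taking $f\equiv 1\in\Dpbmoa$ as a test, the hypothesis gives $g=1\cdot g\in\Dqbmoa\subset\Dq$, while Theorem~\ref{inclusion-always} yields $g\in BMOA_{\log}\cap H^\infty$. Hence $g\in\Dq\cap BMOA_{\log}$.

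For the nontrivial direction (b)$\Rightarrow$(a), write $g(z)=\sum_{k=0}^\infty a_kz^{n_k}$ with $n_{k+1}\ge\lambda n_k$ and assume $g\in\Dq\cap BMOA_{\log}$. The first subgoal is to upgrade the hypothesis to $g\in H^\infty$. Proposition~\ref{Had}(i) applied with $p=q$ and $\alpha=q-1$ identifies $g\in\Dq$ with $\sum_k|a_k|^q<\infty$. When $q=1$ this is $\sum|a_k|<\infty$ directly; when $q<1$, since $|a_k|\to 0$ one has $|a_k|\le|a_k|^q$ for all large $k$, yielding the same conclusion. Proposition~\ref{Had}(ii) then places $g\in H^\infty$, so by (\ref{MBMOA}) one has $g\in H^\infty\cap BMOA_{\log}=\M(BMOA)$.

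Now fix $f\in\Dpbmoa$. Since $g\in\M(BMOA)$ we have $fg\in BMOA$, so only $fg\in\Dq$ needs to be verified. Decompose $(fg)'=f'g+fg'$. The contribution of $f'g$ is controlled by $\|g\|_{H^\infty}^q\int_\D|f'|^q(1-|z|)^{q-1}\,dA$, which is finite because $f\in\Dp\cap BMOA\subset\Dp\cap\cb$ forces $f\in\Dq$ via Lemma~\ref{inclusionDpB}. For the $fg'$ contribution, I use the Carleson-measure description of lacunary functions in $\Dq$: as noted in the proof of Theorem~\ref{pq1}(b), Theorem~3.2 of \cite{GP:IE06} asserts that for $g$ given by a power series with Hadamard gaps, $g\in\Dq$ is equivalent to the measure $d\mu_{g,q}(z)=|g'(z)|^q(1-|z|)^{q-1}\,dA(z)$ being a Carleson measure. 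The Carleson embedding theorem together with the continuous inclusion $BMOA\subset H^q$ then yields
\begin{equation*}
\int_\D|f(z)g'(z)|^q(1-|z|)^{q-1}\,dA(z)=\int_\D|f|^q\,d\mu_{g,q}\lesssim\|f\|_{H^q}^q\lesssim\|f\|_{BMOA}^q<\infty.
\end{equation*}
Combining the two pieces gives $fg\in\Dq\cap BMOA=\Dqbmoa$, so $g\in\M(\Dpbmoa,\Dqbmoa)$.

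The main technical input in this plan is the Carleson-measure characterization of lacunary $\Dq$-functions imported from \cite{GP:IE06}; every other step consists of combining already-established facts, with Lemma~\ref{inclusionDpB} handling the $f'g$ piece and (\ref{MBMOA}) handling the $BMOA$-part of $fg$. The only nonroutine point inside the proof itself is the passage from $\sum|a_k|^q<\infty$ to $\sum|a_k|<\infty$ when $q<1$, which is what allows the hypothesis $g\in\Dq$ to absorb the requirement $g\in H^\infty$ coming from $\M(BMOA)$.
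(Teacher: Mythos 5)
Your proof is correct and takes essentially the same route as the paper's: both directions are handled identically, with the key step being the use of Theorem~3.2 of \cite{GP:IE06} to show that $\mu_{g,q}$ is a Carleson measure after upgrading $g\in\Dq\cap\mathcal L$ to $g\in H^\infty$ via $\sum|a_k|^q<\infty\Rightarrow\sum|a_k|<\infty$. The only cosmetic difference is in the $fg'$ term, where the paper invokes the embedding $\Dq\subset L^q(d\mu_{g,q})$ (citing Wu and Vinogradov) applied to $f\in\Dq$, while you use the classical $H^q$ Carleson embedding together with $BMOA\subset H^q$; both are valid.
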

\begin{pf}\,
Since $\Dpbmoa $ contains the constants functions, it is clear that
$\M(\Dpbmoa ,\Dqbmoa )\subset \Dq$ and the inclusion
\newline $\M(\Dpbmoa ,\Dqbmoa )\subset BMOA_{\log }$ follows from
Theorem\,\@\ref{inclusion-always}. Hence, the implication\, (a)
$\Rightarrow $ (b)\, holds.
\par Let us prove next the other implication. So take $g\in
\Dq\cap BMOA_{\log}\cap \mathcal L$,
$$\hbox{$g(z)=\sum_{k=0} \sp\infty a_kz\sp {n_k}$\, ($z\in \D $)\,
with $n_{k+1}\ge \lambda n_k$\, for all $k$, for a certain $\lambda
>1$.}$$
We have $\sum_{k=0}^\infty \vert a_k\vert ^q<\infty $ which, since
$q\le 1$, implies that $\sum_{k=0}^\infty \vert a_k\vert <\infty $.
Thus $g\in H^\infty $. Then $g\in BMOA_{\log }\cap H^\infty
=\M(BMOA)$. \par Take $f\in \Dpbmoa $.
Since $g\in \M(BMOA)$, we have that $gf\in BMOA$. Now,
\begin{eqnarray*}
&\int_{\D }\vert (gf)^\prime (z)\vert ^q(1-\vert z\vert
^2)^{q-1}\,dA(z)\\ &\lesssim \int_{\D }\vert g(z)\vert ^q\vert
f^\prime (z)\vert ^q(1-\vert z\vert ^2)^{q-1}\,dA(z)\,+\,\int_{\D
}\vert f(z)\vert ^q\vert g^\prime (z)\vert ^q(1-\vert z\vert
^2)^{q-1}\,dA(z)
\\ &=I_1+I_2.
\end{eqnarray*} The first summand $I_1$ is finite because $g\in H^\infty $
and $f\in \Dq$.\par Let us estimate the second one. Using
Theorem\,\@3.\,\@2 of \cite{GP:IE06} we see that the measure $\mu_
{g,q}$ in $\D $ defined by $d\mu_ {g,q}(z)=(1-\vert z\vert
^2)^{q-1}\vert g^\prime (z)\vert ^q\,dA(z)$ is a Carleson measure
and (see, e.\,\@g., \cite[Theorem\,\@1]{Wu} or
\cite[Theorem~2.\,\@1]{Vi}) this implies that $\mu_ {g,q}$ is a
Carleson measure for $\Dq $, that is, $\Dq \subset L^q(d\mu_
{g,q})$. Hence $f\in L^q(d\mu_ {g,q})$ which is equivalent to saying
that $I_2<\infty $. Hence, $gf\in \Dq $.
\par So, we have proved that $gf\in \Dqbmoa $ for any $f\in \Dpbmoa
$, that is, $g\in \M(\Dpbmoa , \Dqbmoa )$.
\end{pf}
\par\bigskip

Finally, we obtain also the analogue of
Theorem\,\@\ref{randon-bmoalogMdp} for lacunary power series.
\begin{theorem}\label{lac-bmoalogMdp}
Let $f\in \hol (\D )$ be given by a lacunary power series, of the
form
$$\hbox{$f(z)=\sum_{k=0} \sp\infty a_kz\sp {n_k}$\, ($z\in \D $)\,
with $n_{k+1}\ge \lambda n_k$\, for all $k$, for a certain $\lambda
>1$},$$
and suppose that the sequence of coefficients $\{ a_k\}
_{k=0}^\infty $ satisfies
\begin{equation}\label{lac-log3}\sum_{k=1}^\infty \vert a_k\vert^2(\log
n_k)^3<\infty .\end{equation} Then the function $f$ satisfies the
following conditions:
\begin{itemize}
\item[(i)] $f\in \M(BMOA )$
\item[(ii)] $f\in M(\Dpbmoa , \Dqbmoa )$ whenever $0<p\le q$ and
$q>\frac{1}{2}$.
\end{itemize}
\par Furthermore, if $0<q<\frac{1}{2}$ then there exists a
sequence $\{ a_k\} $ which satisfies (\ref{lac-log3}) and such that
$f\notin \Dq$. Thus for this sequence $\{ a_k\} $ the function $f$
satisfies:
\begin{itemize}
\item[(a)] $f\in \M(BMOA )$.
\item[(b)] If\, $0<p\le \lambda $ and $\lambda >1/2$ then $f\in M(\Dpbmoa , \mathcal D^\lambda_{\lambda -1} )$ whenever $0<p\le \lambda $.
\item[(c)]  $f\notin M(\Dpbmoa , \Dqbmoa )$ whenever $0<p\le q$.
\end{itemize}
\end{theorem}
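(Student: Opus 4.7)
The plan is to deduce the positive assertions (i)--(ii) and (a)--(b) from results already proved, namely Proposition~\ref{lac-bmoalog-log3}, Theorem~\ref{BMOA1<qpleq}, Lemma~\ref{hinfty+bmoalog+som} together with the identity $\M(BMOA)=BMOA_{\log}\cap H^\infty$ from (\ref{MBMOA}), and to construct an explicit lacunary series to witness the sharpness.

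For part (i), since $f$ is lacunary and $\sum|a_k|^2(\log n_k)^3<\infty$, Proposition~\ref{lac-bmoalog-log3} gives $f\in BMOA_{\log}\cap H^\infty$, which by (\ref{MBMOA}) is exactly $\M(BMOA)$. For part (ii), I would split on the size of $q$: if $q>1$, Theorem~\ref{BMOA1<qpleq} applies and yields $\M(\Dpbmoa,\Dqbmoa)=BMOA_{\log}\cap H^\infty$, hence $f$ is a multiplier. If $\tfrac12<q\le 1$, I would invoke Lemma~\ref{hinfty+bmoalog+som} with $\alpha=3$: the hypothesis $\sum|a_n|^2(\log n)^\alpha<\infty$ is satisfied because the non-lacunary coefficients vanish, and the condition $\tfrac{q\alpha}{2-q}>1$ reduces to $3q>2-q$, i.e.\ $q>\tfrac12$. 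Since $f\in BMOA_{\log}\cap H^\infty$ is already established, the lemma immediately delivers $f\in\M(\Dpbmoa,\Dqbmoa)$.

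For the sharpness, I would construct $\{a_k\}$ with $n_k=2^k$ and $a_k=k^{-1/q}$, so that $f(z)=\sum_{k=1}^\infty k^{-1/q}z^{2^k}$. Since $0<q<\tfrac12$, one has $3-2/q<-1$, so $\sum|a_k|^2(\log n_k)^3\asymp\sum k^{3-2/q}<\infty$, which is (\ref{lac-log3}). On the other hand, by Proposition~\ref{Had}(i) with $\alpha=q-1$, $f\in\Dq$ is equivalent to $\sum|a_k|^q<\infty$; but $\sum|a_k|^q=\sum k^{-1}=\infty$, so $f\notin\Dq$. Assertion (a) then follows exactly as in (i) from Proposition~\ref{lac-bmoalog-log3} and (\ref{MBMOA}). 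For (b), if $\tfrac12<\lambda<2$ apply Lemma~\ref{hinfty+bmoalog+som} with $q$ replaced by $\lambda$ and $\alpha=3$ (again $\tfrac{3\lambda}{2-\lambda}>1$), obtaining $f\in\M(\Dpbmoa,\mathcal{D}^\lambda_{\lambda-1}\cap BMOA)\subset\M(\Dpbmoa,\mathcal{D}^\lambda_{\lambda-1})$; if $\lambda\ge 2$, then $BMOA\subset\mathcal{D}^\lambda_{\lambda-1}$ and so for any $h\in\Dpbmoa$ the product $fh\in BMOA\subset\mathcal{D}^\lambda_{\lambda-1}$ by (a). Finally, (c) is immediate: $1\in\Dpbmoa$, so if $f\in\M(\Dpbmoa,\Dqbmoa)$ we would have $f=f\cdot 1\in\Dqbmoa\subset\Dq$, contradicting $f\notin\Dq$.

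No step appears genuinely delicate, since the heavy lifting has been packaged into Proposition~\ref{lac-bmoalog-log3} and Lemma~\ref{hinfty+bmoalog+som}; the only bookkeeping is the elementary verification that the parameter $\alpha=3$ yields the exact cutoff $q>\tfrac12$ in the condition $\tfrac{q\alpha}{2-q}>1$, which accounts simultaneously for the positive statement in (ii) and for the location of the counterexample produced in the sharpness part.
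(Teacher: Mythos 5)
Your proposal is correct and follows essentially the same route as the paper: parts (i) and (a) via Proposition~\ref{lac-bmoalog-log3} and the identity $\M(BMOA)=BMOA_{\log}\cap H^\infty$, part (ii) and (b) by splitting into $q>1$ (Theorem~\ref{BMOA1<qpleq}) and $q\le 1$ (Lemma~\ref{hinfty+bmoalog+som} with $\alpha=3$), and the same counterexample $a_k=k^{-1/q}$, $n_k=2^k$ for the sharpness, with (c) following from $f\notin\Dq$. Your write-up is in fact somewhat more explicit than the paper's (which leaves the parameter bookkeeping and the $\lambda\ge 2$ case implicit), but there is no substantive difference.
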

\begin{pf}\, Part (i) follows from Proposition\,\@\ref{lac-bmoalog-log3}.
Part (ii) follows from Theorem \ref{BMOA1<qpleq} for $q>1$  and  from Lemma\,\@\ref{hinfty+bmoalog+som} (with
$\alpha =3$) whenever $0<q\le 1$.
\par\medskip Now, if $0<q<\frac{1}{2}$ take $$a_k=k^{-1/q},\quad
k=1, 2, \dots $$ and $$f(z)=\sum_{k=1}^\infty a_kz^{2^k},\quad z\in
\D .$$ Clearly,
$$
\sum_{k=1}^\infty \vert a_k\vert ^2k^3<\infty ,\quad\text{and}\quad
\sum_{k=1}^\infty \vert a_k\vert ^q=\infty .$$ Then $f$ satisfies
conditions (a), (b) and (c) of Theorem\,\@\ref{lac-bmoalogMdp}.
\end{pf}
\par\medskip As we mentioned in Section~\ref{intro}, Theorem\,\@\ref{lac-bmoalogMdp}
also shows that Theorem\,\@\ref{BMOA1<qpleq} does not remain true
for $q<1/2$.

\end{document}